\newcommand\excise[1]{}
\numberwithin{equation}{section}
\newtheorem{thm}{Theorem}[section]
\newtheorem{lemma}[thm]{Lemma}
\newtheorem{prop}[thm]{Proposition}
\newtheorem{cor}[thm]{Corollary}
\newtheorem{prob}[thm]{Open Problem}
\newtheorem{conj}[thm]{Conjecture}
\theoremstyle{definition}
\newtheorem{defn}[thm]{Definition}
\newtheorem{remark}[thm]{Remark}
\newtheorem{example}[thm]{Example}
\newtheorem{exercise}[thm]{Exercise}
\newtheorem{questions}[thm]{Questions}
\newtheorem{answers}[thm]{Answers}
\newcommand\CC{\mathbb{C}}
\newcommand\FF{\mathbb{F}}
\newcommand\NN{\mathbb{N}}
\newcommand\QQ{\mathbb{Q}}
\newcommand\RR{\mathbb{R}}
\newcommand\ZZ{\mathbb{Z}}
\newcommand\bb{\mathbf{b}}
\newcommand\ee{\mathbf{e}}
\newcommand\kk{\Bbbk}
\newcommand\mm{\mathfrak{m}}
\newcommand\uu{\mathbf{u}}
\newcommand\vv{\mathbf{v}}
\newcommand\ww{\mathbf{w}}
\newcommand\pp{\mathfrak{p}}
\newcommand\qq{\mathfrak{q}}
\newcommand\xx{\mathbf{x}}
\newcommand\oJ{{\hspace{.45ex}\overline{\hspace{-.45ex}J}}}
\newcommand\oQ{\hspace{.15ex}\ol{\hspace{-.15ex}Q\hspace{-.25ex}}\hspace{.25ex}}
\newcommand\oT{\ol T}
\newcommand\del{\partial}
\newcommand\til{\mathord\sim}
\newcommand\ttt{\mathbf{t}}
\newcommand\from{\leftarrow}
\newcommand\into{\hookrightarrow}
\newcommand\onto{\twoheadrightarrow}
\newcommand\otno{\twoheadleftarrow}
\newcommand\ffrom{\longleftarrow}
\newcommand\minus{\smallsetminus}
\newcommand\cupdot{\ensuremath{\mathbin{\mathaccent\cdot\cup}}}
\newcommand\goesto{\rightsquigarrow}
\newcommand\Implies{\Longrightarrow}
\newcommand\nothing{\varnothing}
\newcommand\upsupseteq{\mathrel{\rotatebox[origin=c]{90}{$\supseteq$}}}
\newcommand\upequals{\mathrel{\rotatebox[origin=c]{90}{$=$}}}
\newcommand\<{\langle}
\renewcommand\>{\rangle}
\renewcommand\aa{\mathbf{a}}
\renewcommand\tt{{\mathbf{t}}}
\renewcommand\th{\mathrm{th}}
\renewcommand\phi{\varphi}
\renewcommand\iff{\Leftrightarrow}
\renewcommand\implies{\Rightarrow}
\def\1100{$\begin{array}{@{}c@{}}\\[-3.5ex]\scriptscriptstyle
	1100\\[-1.5ex]\scriptscriptstyle 0111\\[-.5ex]\end{array}$}
\newcommand\ol[1]{{\overline{#1}}{}}
\newcommand\reaction[2]{%
	\mathbin{\begin{array}{@{\,}c@{\,}}
		\\[-3.5ex]\scriptstyle#1
		\\[-1.2ex]\rightleftharpoons
		\\[-1.5ex]\scriptstyle#2
	\end{array}}}
\newcommand\sat{\mathrm{sat}}
\newcommand\qdeg{\mathrm{qdeg}}
\newcommand\tdeg{\mathrm{tdeg}}
\newcommand\toral{\text{\rm toral}}
\newcommand\water{\ensuremath{\text{H}_{\text{2}}\text{O}}}
\newcommand\andean{\text{\rm Andean}}
\newcommand\oxygen{\text{O}_{\text{2}}}
\newcommand\peroxide{\ensuremath{\text{H}_{\text{2}}\text{O}_{\text{2}}}}
\DeclareMathOperator\Hom{Hom}
\DeclareMathOperator\std{std}
\DeclareMathOperator\vol{vol}
\newcommand\nim{\textsc{Nim}}
\newcommand\dawson{\textsc{Dawson's Chess}}
\newcommand\chess{\textsc{Chess}}
\newcommand{\aoverb}[2]{{\genfrac{}{}{0pt}{1}{#1}{#2}}}
\def\twoline#1#2{\aoverb{\scriptstyle {#1}}{\scriptstyle {#2}}}
\def\filleftmap{\mathord\leftarrow \mkern-6mu
	\cleaders\hbox{$\mkern-2mu \mathord- \mkern-2mu$}\hfill
	\mkern-6mu \mathord-}
\def\fillmapsfrom{\mathord\leftarrow \mkern-6mu
	\cleaders\hbox{$\mkern-2mu \mathord- \mkern-2mu$}\hfill
	\mkern-6mu\mathord-\mathord{\raisebox{.5pt}{$\!\shortmid$}}}
\definecolor{darkgreen}{rgb}{0,.5,0}
\definecolor{darkpurple}{rgb}{.6,0,.6}
\definecolor{turquoise}{rgb}{0,.5,.7}
\begin{document}

\mbox{}\vspace{-3ex}
\title
{Theory and applications of lattice point methods for binomial ideals}
\author{Ezra Miller}
\address{Mathematics Department\\Duke University\\Durham, NC 27707}
\email{ezra@math.duke.edu}

\begin{abstract}
This survey of methods surrounding lattice point methods for binomial
ideals begins with a leisurely treatment of the geometric
combinatorics of binomial primary decomposition.  It then proceeds to
three
independent applications whose motivations come from outside of
commutative algebra: hypergeometric systems, combinatorial game
theory, and chemical dynamics.  The exposition is aimed at students
and researchers in algebra; it includes many examples, open problems,
and elementary introductions to the motivations and background from
outside of~algebra.\vspace{-1ex}
\end{abstract}

\keywords{binomial ideal, primary decomposition,
polynomial ring,
affine semigroup,
commutative monoid,
lattice point,
convex polyhedron,
monomial ideal,
combinatorial game,
lattice game, rational strategy, mis\`ere quotient,
Horn hypergeometric system,
mass-action kinetics}

\date{8 September 2010}

\maketitle

\tableofcontents

\vspace{-7ex}
\section*{Introduction}\label{s:intro}

Binomial ideals in polynomial rings over algebraically closed fields
admit \emph{binomial primary decompositions}: expressions as
intersections of primary binomial ideals.  The algebra of these
decompositions is governed by the geometry of lattice points in
polyhedra and related lattice-point combinatorics arising from
congruences on commutative monoids.  The treatment of this geometric
combinatorics is terse at the source \cite{primDecomp}.  Therefore, a
primary goal of this exposition is to provide a more leisurely tour
through the relevant phenomena; this is the concern of
Sections~\ref{s:affine}, \ref{s:primary}, and~\ref{s:decomp}.

That the geometry of congruences should govern binomial primary
decomposition was a realization made in the context of classical
multivariate hypergeometric series, going back to Horn, treated in
Section~\ref{s:horn}.  Lattice-point combinatorics related to monoids
and congruences has recently been shown relevant to the theory of
combinatorial games, and is sure to play a key role in algorithms for
computing rational strategies and mis\`ere quotients, as discussed in
Section~\ref{s:games}.  Finally, binomial commutative algebra is
central to a long-standing conjecture on the dynamics of chemical
reactions under mass-action kinetics.  The specifics of this
connection are briefly outlined in Section~\ref{s:chem}, along with
the potential relevance of combinatorial methods for binomial primary
decomposition.  Limitations of time and space prevented the inclusion
of algebraic statistics in this survey; for an exposition of binomial
aspects of Markov bases and conditional independence models, such as
graphical models, as well as applications to phylogenetics, see
\cite{algStat09} and the references therein.

Sections~\ref{s:affine}, \ref{s:primary}, and~\ref{s:decomp} are
complete in the sense that statements are made in full generality, and
precise references are provided for the details of any argument that
is only sketched.  In contrast, Sections~\ref{s:horn}, \ref{s:games},
and~\ref{s:chem} are more expository.  The results there are sometimes
stated in less than full generality---but still mathematically
precisely---to ease the~exposition.  In addition,
Sections~\ref{s:horn}, \ref{s:games}, and~\ref{s:chem} are independent
of one another, and to a large extent independent of
Sections~\ref{s:affine}, \ref{s:primary}, and~\ref{s:decomp}, as well;
readers interested in the applications should proceed to the relevant
sections and refer back~as~necessary.

\medskip
\noindent
\textbf{Acknowledgements.}  I am profoundly grateful to the organizers
and participants of the International School on Combinatorics at
Sevilla, Spain in January 2010, where these notes were presented as
five lectures.  That course was based on my Abel Symposium talk at
Voss, Norway in June 2009; I am similarly indebted to that meeting's
organizers.  Thanks also go to my coauthors, from whom I learned so
much while working on various projects mentioned in this survey.
Funding was provided by NSF CAREER grant DMS-0449102 = DMS-1014112 and
NSF grant DMS-1001437.

\part{Theory}

\section{Affine semigroups and prime binomial ideals}\label{s:affine}%

\subsection{Affine semigroups}

Let $\ZZ^d \subset \RR^d$ denote the integer points in a real vector
space of dimension~$d$.  Any integer point configuration
$$%
  A = \{\aa_1,\ldots,\aa_n\} \subset \ZZ^d
  \quad\longleftrightarrow\quad
  A = \left[
      \begin{array}{ccc}
	  |   &        &   |   \\
	\aa_1 & \cdots & \aa_n \\
	  |   &        &   |   
      \end{array}
      \right]
  \in \ZZ^{d \times n}
$$
can be identified with a $d \times n$ integer matrix.

\begin{defn}
A \emph{monoid} is a set with an associative binary operation and an
identity element.  An \emph{affine semigroup} is a monoid that is
isomorphic to
$$%
  \NN A = \NN\{\aa_1,\ldots,\aa_n\}
        = \{c_1\aa_1 + \cdots + c_n\aa_n \mid c_1,\ldots,c_n \in \NN\}
$$
for some lattice point configuration $A \subset \ZZ^d$.
\end{defn}

Thus a monoid is a group without inverses.  Although a
\emph{semigroup} is generally not required to have an identity
element, standard terminology from the literature dictates that an
affine semigroup is a monoid, and in particular (isomorphic to) a
finitely generated submonoid of an integer lattice~$\ZZ^d$ for
some~$d$.

\begin{example}\label{e:0123}
The configuration
$$%
\begin{array}{cc}
\\[-6ex]
  \begin{array}{c}
  \\\\\\\\
  A =
  \left[
  \begin{array}{cccc}
	 1 & 1 & 1 & 1 \\
	 0 & 1 & 2 & 3
       \end{array}
  \right]
  \qquad\longleftrightarrow\ \
  \end{array}
&
  \begin{array}{c}\includegraphics{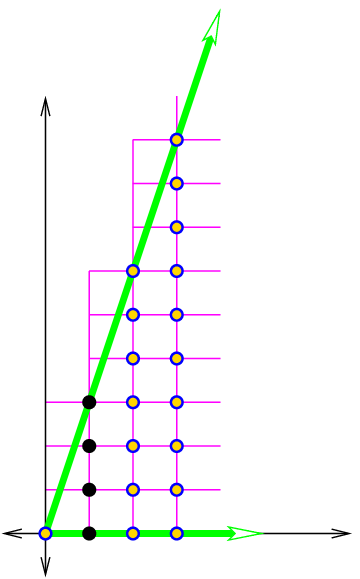}\end{array}
\\[-1ex]
\end{array}
$$
in~$\ZZ^2$, drawn as solid dots in the plane, generates the affine
semigroup comprising all lattice points in the real cone bounded by
the thick horizontal ray and the diagonal ray.  This example will
henceforth be referred to as ``\,0123\,''.
\end{example}

\begin{example}\label{e:1100}
A point configuration is allowed to have repeated elements, such as
$$%
\begin{array}{cc}
  \begin{array}{c}
  \\
  A =
  \left[
  \begin{array}{cccc}
	 1 & 1 & 0 & 0 \\
	 0 & 1 & 1 & 1
       \end{array}
  \right]
  \qquad\longleftrightarrow\ \
  \end{array}
&
  \begin{array}{c}\includegraphics{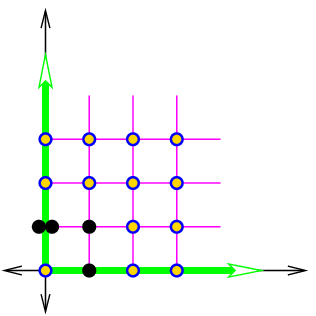}\end{array}
\end{array}
$$
in~$\ZZ^2$, which generates the affine semigroup $\NN A = \NN^2$ of
all lattice points in the nonnegative quadrant.  This example will
henceforth be referred to as ``\,\1100\,''.
\end{example}

\begin{example}\label{e:hartcone}
It will be helpful, later on, to have a three-dimensional example
ready.  Consider a square at height~$1$ parallel to the horizontal
plane.  It can be represented as a matrix and point configuration
in~$\ZZ^3$ as follows:
$$%
\psfrag{a}{$\scriptstyle a$}
\psfrag{b}{$\scriptstyle b$}
\psfrag{c}{$\scriptstyle c$}
\psfrag{d}{$\scriptstyle d$}
\begin{array}{cc}
  \begin{array}{c}
  \\
  A =
  \begin{array}{@{}c@{}}
  \begin{array}{cccc}
	 a & b & c & d
  \end{array}\\[.75ex]
  \left[
  \begin{array}{cccc}
	 0 & 1 & 0 & 1 \\
	 0 & 0 & 1 & 1 \\
	 1 & 1 & 1 & 1
  \end{array}
  \right]\\
  \begin{array}{cccc}
	   &   &   &  
  \end{array}
  \end{array}
  \qquad\longleftrightarrow\ \
  \end{array}
&
  \begin{array}{c}\includegraphics{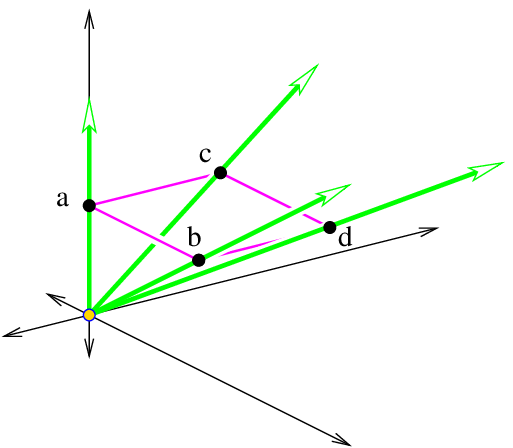}\end{array}
\end{array}
$$
The affine semigroup $\NN A$ comprises all of the lattice points in
the real cone generated by the vertices of the square.
\end{example}

In all of these examples, the affine semigroups are \emph{normal}:
each one equals the set of all lattice points from a rational
polyhedral cone.  General affine semigroups need not be normal, though
they always comprise ``most'' of the lattice points in~a~cone.

\begin{example}
In Example~\ref{e:0123}, the outer columns of the matrix, namely
$\left[\twoline 10\right]$ and $\left[\twoline 13\right]$, correspond
to the extremal rays; they therefore generate the rational polyhedral
cone whose lattice points constitute the 0123 affine semigroup.
Consequently, the configuration $\left[\twoline 10\twoline 12\twoline
13\right]$ generates the same rational polyhedral cone, but the affine
semigroup it generates is different---and not normal---because the
point $\left[\twoline 11\right]$ does not lie in it, even though the
configuration still generates~$\ZZ^2$.
\end{example}

The geometry of binomial primary decomposition is based on the sort of
geometry that arises from the projection determined by~$A$.  To be
more precise, $A$ determines a monoid morphism $\ZZ^d \from \NN^n$.
This morphism can be expressed as the restriction of the group
homomorphism (the linear map) $\ZZ^d \from \ZZ^n$ induced by~$A$.  The
diagram is as follows:
$$%
\begin{array}{c@{\ }c@{\ }c@{\ }c@{\ }c@{\ }c@{\ }c}
        \aa_j & \fillmapsfrom & \ee_j
\\            &\ \ \begin{array}{|c|}
               \hline \scriptstyle n\\\scriptstyle d\hfill\\[-2ex]
               \qquad A\qquad\\[2ex]\hline
               \end{array}\ \ &
\\
       \ZZ^d    & \filleftmap &    \NN^n
\\
     \upequals  &             & \upsupseteq &
\\[.5ex]
       \ZZ^d    & \filleftmap &    \ZZ^n    & \ffrom &      L      & \from & 0
\\
    \upsupseteq &             & \upsupseteq &        & \upsupseteq &       & 
\\[.5ex]
       \RR^d    & \filleftmap &    \RR^n    & \ffrom &   \RR L     & \from & 0
\end{array}
$$
The kernel~$L$ of the homomorphism $\ZZ^d \from \ZZ^n$ induced by~$A$
is a \emph{saturated} lattice in~$\ZZ^n$, meaning that $\ZZ^n/L$ is
torsion-free, or equivalently that $L = \RR L \cap \ZZ^n$, where $\RR
L = \RR \otimes_\ZZ L$ is the real subspace of~$\RR^n$ generated
by~$L$.

It makes little sense to say that the monoid morphism $\ZZ^d \from
\NN^n$ has a kernel: it is often the case that $L \cap \NN^n = \{0\}$,
even when the monoid morphism is far from injective.  However, when
$\ZZ^d \from \NN^n$ fails to be injective, the fibers admit clean
geometric descriptions, inherited from the fact that the fibers of the
vector space map $\RR^d \from \RR^n$ are the cosets of~$\RR L$
in~$\RR^n$.

\begin{defn}\label{d:polyhedron}
A \emph{polyhedron} in a real vector space is an intersection of
finitely many closed real half-spaces.  
\end{defn}

This survey assumes basic knowledge of polyhedra.  Readers for whom
Definition~\ref{d:polyhedron} is not familiar are urged to consult
\cite[Chapters~0, 1, and~2]{ziegler}.

\begin{lemma}\label{l:fiber}
The fiber of the monoid morphism $\ZZ^d \stackrel A\from \NN^n$ over a
given lattice point $\alpha \in \ZZ^d$ is the set
$$%
  F_\alpha = \NN^n \cap (\uu + L) = \NN^n \cap P_\alpha
$$%
of lattice points in the polyhedron
$$%
  P_\alpha = (\uu + \RR L) \cap \RR^n_{\geq 0}
$$
for any vector $\uu \in \ZZ^d$ satisfying $A\uu = \alpha$, where $L =
\ker A$.
\end{lemma}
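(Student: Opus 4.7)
The plan is to verify the two claimed equalities separately. For the first, $F_\alpha = \NN^n \cap (\uu + L)$, I would fix any preimage $\uu \in \ZZ^n$ of $\alpha$ under the linear map $\ZZ^d \from \ZZ^n$ induced by $A$; the lemma's ``$\uu \in \ZZ^d$'' is evidently a typo for $\uu \in \ZZ^n$, since $A\uu = \alpha$ requires $\uu$ to be a domain vector of $A$. By linearity, for any $\vv \in \ZZ^n$ one has $A\vv = \alpha$ if and only if $A(\vv - \uu) = 0$, if and only if $\vv - \uu \in \ker A = L$. Intersecting the set of such $\vv$ with $\NN^n$ gives exactly the fiber $F_\alpha$, proving the first equality.

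For the second equality $\NN^n \cap (\uu + L) = \NN^n \cap P_\alpha$, the forward inclusion is immediate from $L \subseteq \RR L$ combined with $\NN^n \subseteq \RR^n_{\geq 0}$. For the reverse inclusion, suppose $\vv \in \NN^n \cap P_\alpha$. Then $\vv - \uu \in \RR L$ by definition of $P_\alpha$, and $\vv - \uu \in \ZZ^n$ because both $\vv$ and $\uu$ are integer vectors. The saturation of $L$, noted in the paragraph immediately preceding Definition~\ref{d:polyhedron}, asserts $L = \RR L \cap \ZZ^n$, which forces $\vv - \uu \in L$ and hence $\vv \in \uu + L$.

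The main obstacle, such as it is, is invoking saturation at exactly the right spot; this is the only non-tautological ingredient, since everything else is just unwinding the commutative diagram preceding the lemma. One should also dispose of degenerate cases: if $\alpha$ has no integer preimage at all, all three sets are empty and the hypothesis is vacuous, while if $\uu \in \ZZ^n$ with $A\uu = \alpha$ exists but no nonnegative preimage does, then both $F_\alpha$ and $\NN^n \cap (\uu + L)$ are empty by construction and equal the empty set $\NN^n \cap P_\alpha$. The real value of the lemma is the geometric picture it supplies: fibers of the monoid morphism $\NN^n \to \ZZ^d$ are lattice points in parallel translates of the fixed rational subspace $\RR L$, each translate cut down by the nonnegative orthant.
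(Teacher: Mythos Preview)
Your proof is correct. The paper actually states this lemma without proof, treating it as an immediate consequence of the commutative diagram preceding it, so there is nothing to compare against. Your observation that ``$\uu \in \ZZ^d$'' must be a typo for $\uu \in \ZZ^n$ is well taken, and your identification of saturation $L = \RR L \cap \ZZ^n$ as the one substantive ingredient (for the reverse inclusion in the second equality) is exactly right.
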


This description is made particularly satisfying by the fact that the
polyhedra for various $\alpha \in \ZZ^d$ are all related to one
another.

\begin{example}\label{e:triangles}
When $A = [1\ \,1\ \,1]$ is the ``coordinate-sum'' map $\NN \from
\NN^3$, the polyhedra $P_\alpha$ for $\alpha \in \NN$ are equilateral
triangles, the lattice points in them corresponding to the monomials
of total degree~$\alpha$ in three variables:
$$%
\begin{array}{cc}
  \begin{array}{c}
    \\
    \text{fibers of } A = [1\ \,1\ \,1]
    \qquad\longleftrightarrow\ \
  \end{array}
&
  \begin{array}{c}\includegraphics{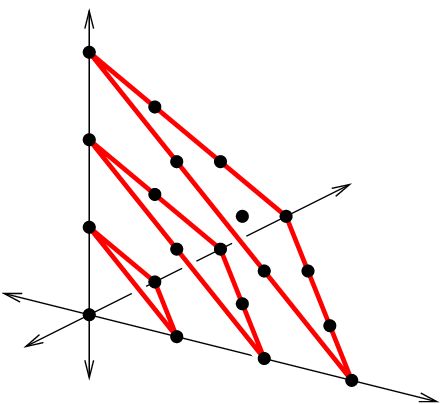}\end{array}
\end{array}
$$
\end{example}

The polyhedra in Example~\ref{e:triangles} are all scalar multiples of
one another, but this phenomenon is special to codimension~$1$.  In
general, when \mbox{$n - d > 1$}, the polyhedra $P_\alpha$ in the
family indexed by $\alpha \in \NN A$ have facet normals chosen from
the same fixed set of possiblities---namely, the images in~$L^*$ of
the dual basis vectors of~$(\ZZ^n)^*$---so their shapes feel roughly
similar, but faces can shrink or~disappear.

\begin{example}
In the case of 0123, a basis for the kernel $\ker A$ can be chosen so
that the inclusion $\ZZ^n \from \ker A$ is given by the matrix
$$%
\begin{array}{cc}
  \begin{array}{c}
  \\[-4ex]
  B =
  \left[
  \begin{array}{@{\,}rr}
	 1 & 0 \\
	-2 & 1 \\
	 1 &-2 \\
	 0 & 1 
       \end{array}
  \right]
  \qquad\goesto\ \
  \end{array}
&
  \psfrag{,}{$,$}
  \psfrag{u1}{$P_\alpha$}
  \psfrag{u2}{$P_\beta$}
  \begin{array}{c}\includegraphics{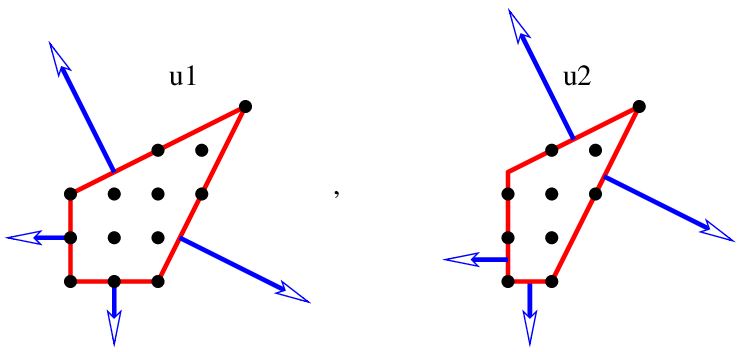}\end{array}
\end{array}
$$
so that, for example, the depicted polytopes $P_\alpha$ and~$P_\beta$
for $\alpha = \left[\twoline{8}{12}\right]$
and $\beta = \left[\twoline{7}{12}\right]$
both have outer normal vectors that are the negatives of the rows
of~$B$.  Moving to $P_\gamma$ for $\gamma =
\left[\twoline{6}{12}\right]$ would shrink the bottom edge entirely.
The corresponding fibers~$F_\alpha$, $F_\beta$, and~$F_\gamma$
comprise the lattice points in these polytopes.
\end{example}

\subsection{Affine semigroup rings}

\begin{defn}
The \emph{affine semigroup ring} of~$\NN A$ over a field~$\kk$ is
$$%
  \kk[\NN A] = \bigoplus_{\alpha\in\NN A} \kk \cdot \tt^\alpha,
$$
a subring of the Laurent polynomial ring
$$%
  \kk[\ZZ^d] = \kk[t_1^{\pm1},\ldots,t_d^{\pm1}],
$$
in which
$$%
  \tt^\alpha = t_1^{\alpha_1}\cdots t_d^{\alpha_d}
  \quad\text{and}\quad \tt^{\alpha+\beta} = \tt^\alpha\tt^\beta.
$$
\end{defn}
The definition could be made with $\kk$ an arbitrary commutative ring,
but in fact the case we care about most is $\kk = \CC$, the field of
complex numbers.  The reason is that the characteristic zero and
algebraically closed hypotheses enter at key points; these notes
intend to be precise about which hypotheses are needed where.

\begin{defn}\label{d:surj}
Denote by $\pi_A$ the surjection
\begin{align*}
  \kk[\NN A] &\stackrel{\pi_A}\otno \kk[\xx]
\\ t^{\aa_i} &\mapsfrom x_i
\end{align*}
onto the affine semigroup ring~$\kk[\NN A]$ from the polynomial
ring~$\kk[\xx] := \kk[x_1\ldots,x_n]$.
\end{defn}

The next goal is to calculate the kernel $I_A = \ker(\pi_A)$.  To do
this it helps to note that both $\kk[\NN A]$ and the polynomial ring
are graded, in the appropriate sense.

\begin{defn}\label{d:A-graded}
Let $A \in \ZZ^{d \times n}$, so $\ZZ A \subseteq \ZZ^d$ is a
subgroup.  A ring $R$ is \emph{$A$-graded} if $R$ is a direct sum of
\emph{homogeneous components}
$$%
  R = \bigoplus_{\alpha\in\ZZ A} R_\alpha, \quad\text{such that}\quad
  R_\alpha R_\beta \subseteq R_{\alpha+\beta}.
$$
An ideal in an $A$-graded ring is \emph{$A$-graded} if it is generated
by homogeneous elements.
\end{defn}

\begin{example}\label{e:A-graded}
The affine semigroup ring $\kk[\NN A]$ is $A$-graded, with $\kk[\NN
A]_\alpha = \kk\cdot\tt^\alpha$ if $\alpha \in \NN A$, and $\kk[\NN
A]_\alpha = 0$ otherwise.  The polynomial ring $\kk[\xx]$ is also
$A$-graded, with
$$%
  \kk[\xx]_\alpha = \kk\cdot\{F_\alpha\},
$$
the vector space spanned by the fiber $F_\alpha$ from
Lemma~\ref{l:fiber}.
\end{example}

\begin{prop}
The kernel of the surjection~$\pi_A$ from Definition~\ref{d:surj} is
\begin{align*}
  I_A &= \<\xx^\uu-\xx^\vv \mid \uu,\vv\in\NN^n\text{ and }A\uu = A\vv\>
\\    &= \<\xx^\uu-\xx^\vv \mid \uu,\vv\in\NN^n\text{ and }\uu-\vv\in \ker A\>.
\end{align*}
\end{prop}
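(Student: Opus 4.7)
The plan is to prove both directions of the equality via the $A$-grading. First I would verify the trivial equivalence of the two displayed descriptions: $A\uu = A\vv$ iff $\uu-\vv \in \ker A = L$, so the two generating sets coincide.

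For the easy containment $\<\xx^\uu - \xx^\vv \mid A\uu = A\vv\> \subseteq I_A$, I would simply apply $\pi_A$ to a generator: $\pi_A(\xx^\uu - \xx^\vv) = \tt^{A\uu} - \tt^{A\vv} = 0$ whenever $A\uu = A\vv$, so every generator is in $\ker(\pi_A) = I_A$.

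The reverse containment is the real content, and the key observation is that $\pi_A$ is a homomorphism of $A$-graded rings in the sense of Definition~\ref{d:A-graded}: using the grading on $\kk[\xx]$ from Example~\ref{e:A-graded}, the monomial $\xx^\uu$ lies in $\kk[\xx]_{A\uu}$ and is sent to $\tt^{A\uu} \in \kk[\NN A]_{A\uu}$. Consequently $I_A$ is an $A$-graded ideal, so it suffices to show that every $A$-homogeneous element of $I_A$ lies in the binomial ideal on the right. I would take a homogeneous $f \in I_A$ of degree $\alpha$ and expand $f = \sum_{\uu \in F_\alpha} c_\uu \xx^\uu$, where $F_\alpha = \NN^n \cap P_\alpha$ is the fiber from Lemma~\ref{l:fiber}. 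Applying $\pi_A$ gives $0 = \pi_A(f) = \bigl(\sum_{\uu \in F_\alpha} c_\uu\bigr)\tt^\alpha$, so $\sum_\uu c_\uu = 0$. Fixing any $\uu_0 \in F_\alpha$ (which exists because the sum is nonempty precisely when $f \neq 0$), I can then rewrite
$$
  f \;=\; \sum_{\uu \in F_\alpha} c_\uu \xx^\uu
       \;=\; \sum_{\uu \in F_\alpha} c_\uu\bigl(\xx^\uu - \xx^{\uu_0}\bigr),
$$
using $\sum c_\uu = 0$ to absorb the $\xx^{\uu_0}$ terms. Each $\xx^\uu - \xx^{\uu_0}$ with $\uu \in F_\alpha$ satisfies $A\uu = \alpha = A\uu_0$, so it is one of the listed generators, completing the proof.

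The only subtlety, and the step I would flag as the main (minor) obstacle, is justifying that $I_A$ is $A$-graded, i.e.\ that the kernel of an $A$-graded homomorphism is an $A$-graded ideal. This follows because each homogeneous component $\pi_A|_{\kk[\xx]_\alpha} \colon \kk[\xx]_\alpha \to \kk[\NN A]_\alpha$ is a map of vector spaces, so $\ker(\pi_A)$ decomposes as $\bigoplus_\alpha \ker(\pi_A|_{\kk[\xx]_\alpha})$; this decomposition expresses any element of $I_A$ as a sum of homogeneous elements of $I_A$, and these homogeneous pieces generate $I_A$.
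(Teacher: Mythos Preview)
Your proof is correct and takes essentially the same approach as the paper: both exploit the $A$-grading to reduce the reverse containment to a degree-by-degree statement on the fibers~$F_\alpha$. The only difference is cosmetic: the paper phrases the argument as a dimension count (the quotient by the binomial ideal has graded pieces of dimension at most~$1$, forcing the surjection to~$\kk[\NN A]$ to be an isomorphism), whereas you explicitly rewrite a homogeneous kernel element as $\sum c_\uu(\xx^\uu - \xx^{\uu_0})$ using $\sum c_\uu = 0$; these are two presentations of the same idea.
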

\begin{proof}
The ``$\supseteq$'' containment follows simply because $\pi_A(\xx^\uu)
= \xx^{A\uu}$.  The reverse containment uses the $A$-grading: in the
ring $R = \kk[\xx]/\<\xx^\uu-\xx^\vv\mid A\uu=A\vv\>$, the dimension
of the image of $\kk[\xx]_\alpha$ as a vector space over~$\kk$ is
either $0$ or~$1$ since $A\uu = A\vv = \alpha$ if $\uu$ and~$\vv$ lie
in the same fiber~$F_\alpha$.  On the other hand, $R \onto \kk[\NN
A]_\alpha$ maps surjectively onto the affine semigroup ring by the
``$\supseteq$'' containment already proved.  The surjection must be an
isomorphism because, as we noted in Example~\ref{e:A-graded}, $\kk[\NN
A]_\alpha$ has dimension~$1$ whenever $F_\alpha$ is nonempty.
\end{proof}

\begin{cor}
The \emph{toric ideal} $I_A$ is prime.
\end{cor}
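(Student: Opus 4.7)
The plan is to exploit the fact, already established, that $I_A$ is the kernel of the surjection $\pi_A\colon \kk[\xx] \onto \kk[\NN A]$ from the preceding proposition. By the first isomorphism theorem for rings, this gives an isomorphism $\kk[\xx]/I_A \isom \kk[\NN A]$. An ideal in a commutative ring is prime if and only if the quotient by it is an integral domain, so it suffices to show that $\kk[\NN A]$ is an integral domain.

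For that, I would observe that by definition $\kk[\NN A]$ sits as a subring of the Laurent polynomial ring $\kk[\ZZ^d] = \kk[t_1^{\pm 1},\ldots,t_d^{\pm 1}]$: its elements are $\kk$-linear combinations of monomials $\tt^\alpha$ with $\alpha \in \NN A \subseteq \ZZ^d$, with multiplication inherited from $\tt^{\alpha+\beta} = \tt^\alpha \tt^\beta$. The Laurent polynomial ring is itself an integral domain, being the localization of the ordinary polynomial ring $\kk[t_1,\ldots,t_d]$ at the multiplicative set generated by $t_1,\ldots,t_d$ (and localizations of domains are domains). Any subring of an integral domain is an integral domain, hence $\kk[\NN A]$ is a domain and $I_A$ is~prime.

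There is essentially no obstacle here: the real content lies in the preceding identification of $I_A$ with $\ker(\pi_A)$, which is already in hand. The one small thing to verify — and it is immediate from the definition of $\kk[\NN A]$ as $\bigoplus_{\alpha \in \NN A} \kk\cdot \tt^\alpha$ with the stated multiplication — is that $\kk[\NN A]$ really is a \emph{sub}ring of $\kk[\ZZ^d]$, i.e.\ that the inclusion $\NN A \hookrightarrow \ZZ^d$ makes the induced map on group algebras injective. This follows because distinct elements of $\NN A$ remain distinct in $\ZZ^d$, so the monomials $\tt^\alpha$ for $\alpha \in \NN A$ are $\kk$-linearly independent inside $\kk[\ZZ^d]$.
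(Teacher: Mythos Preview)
Your proof is correct and follows exactly the same approach as the paper: show that $\kk[\xx]/I_A \isom \kk[\NN A]$ is an integral domain by virtue of being a subring of the Laurent polynomial ring $\kk[\ZZ^d]$. The paper's proof is a one-liner to this effect; you have simply filled in the routine details.
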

\begin{proof}
$\kk[\NN A]$ is an integral domain, being contained in $\kk[\ZZ^d]$.
\end{proof}

\begin{example}\label{e:0123'}
In the 0123 case, using variables $\{a,b,c,d\}$ instead of
$\{x_1,x_2,x_3,x_4\}$,
$$%
  A =
  \left[
  \begin{array}{cccc}
	 1 & 1 & 1 & 1 \\
	 0 & 1 & 2 & 3
       \end{array}
  \right]
  \text{ and }\
  B =
  \left[
  \begin{array}{@{\,}rr}
	 1 & 0 \\
	-2 & 1 \\
	 1 &-2 \\
	 0 & 1 
       \end{array}
  \right]
  \ \Implies\
  I_A = \<ac-b^2,bd-c^2,ad-bc\>,
$$
where $\ker A$ is the image of~$B$ in~$\ZZ^4$.  The presence of the
binomials $ac-b^2$ and $bd-c^2$ in~$I_A$ translate the statement ``the
columns of the matrix~$B$ lie in the kernel of~$A$''.  However, note
that $I_A$ is not generated by these two binomials; it is a
complicated problem, in general, to determine a minimal generating set
for~$I_A$.
\end{example}

\begin{example}\label{e:1100'}
In the \1100 case, using variables $\{a,b,c,d\}$ instead of
$\{x_1,x_2,x_3,x_4\}$,
$$%
  A =
  \left[
  \begin{array}{cccc}
	 1 & 1 & 0 & 0 \\
	 0 & 1 & 1 & 1
       \end{array}
  \right]
  \text{ and }\
  B =
  \left[
  \begin{array}{@{\,}rr}
	 1 & 1 \\
	-1 &-1 \\
	 1 & 0 \\
	 0 & 1 
       \end{array}
  \right]
  \ \Implies\
  I_A = \<ac-b,c-d\>,
$$
where again $\ker A$ is the image of~$B$ in~$\ZZ^4$.  In this case,
$I_A$ is indeed generated by two binomials corresponding to a basis
for the kernel of~$A$, but not the given basis appearing as the
columns of~$B$.  In Section~\ref{s:horn}, we shall be interested in
the ideal generated by the two binomials corresponding to the columns
of~$B$.
\end{example}

\begin{example}
In the square cone case, using $\{a,b,c,d\}$ instead of
$\{x_1,x_2,x_3,x_4\}$,
$$%
  A =
  \left[
  \begin{array}{cccc}
	 0 & 1 & 0 & 1 \\
	 0 & 0 & 1 & 1 \\
	 1 & 1 & 1 & 1
       \end{array}
  \right]
  \text{ and }\
  B =
  \left[
  \begin{array}{@{\,}rr}
	 1 \\
	-1 \\
	-1 \\
	 1 
       \end{array}
  \right]
  \ \Implies\
  I_A = \<ad-bc\>,
$$
where $\ker A$ is the image of~$B$.  In the codimension~$1$ case, when
$\ker A$ has rank~$1$, the toric ideal is always principal, just as
any codimension~$1$ prime ideal in~$\kk[\xx]$~is.
\end{example}

\subsection{Prime binomial ideals}

At last it is time to define binomial ideals precisely.

\begin{defn}
An ideal $I \subseteq \kk[\xx]$ is a \emph{binomial ideal} if it is
generated by \emph{binomials}
$$%
  \xx^\uu - \lambda\xx^\vv \quad\text{with}\quad \uu,\vv \in \NN^n
  \text{ and } \lambda \in \kk.
$$
\end{defn}

Note that $\lambda = 0$ is allowed: monomials are viable generators of
binomial ideals.  This may seem counterintuitive, but it is forced by
allowing arbitrary nonzero constants~$\lambda$, and in any case, even
ideals generated by differences of monomials (``pure-difference
binomials'') have associated primes containing monomials.

\begin{example}
The ideal $\<x^3-y^2, x^3-2y^2\> \subseteq \kk[x,y]$ is generated by
binomials but equals the monomial ideal $\<x^3,y^2\>$, no matter the
characteristic of~$\kk$.  Worse, the ideal $I = \<x^2 - xy, xy -
2y^2\> \subseteq \kk[x,y]$ is generated by ``honest'' binomials that
are not linear combinations of monomials in~$I$, and yet $I$ contains
monomials, because $I$ contains both of $x^2y - xy^2$ and $x^2y -
2xy^2$, so the monomials $x^2y$ and~$xy^2$ lie in~$I$.
\end{example}

\begin{example}
The pure-difference binomial ideal $I = \<x^2-xy, xy-y^2\> \subseteq
\kk[x,y]$ has a monomial associated prime ideal $\<x,y\>$, since $I =
\<x-y\> \cap \<x^2,y\>$.
\end{example}

In general, which binomial ideals are prime?  We have seen that toric
ideals~$I_A$ are prime, but for binomial primary decomposition in
general it is important to know all of the other binomial primes, as
well.  The answer was given by Eisenbud and Sturmfels
\cite[Corollary~2.6]{binomialIdeals}.

\begin{thm}\label{t:prime}
When $\kk$ is algebraically closed, a binomial ideal $I \subseteq
\kk[x_1,\ldots,x_m]$ is prime if and only if it is the kernel of a
surjective $A$-graded homomorphism $\pi: \kk[\xx] \onto \kk[\NN A]$ in
which the variables are homogeneous.
\end{thm}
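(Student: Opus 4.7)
The plan is to handle the two directions separately, with substance concentrated in the converse. For the easy implication ($\Leftarrow$), suppose $\pi : \kk[\xx] \onto \kk[\NN A]$ is a surjective $A$-graded map in which each $x_i$ is homogeneous. Then $\pi(x_i)$ lies in the at-most-$1$-dimensional graded piece $\kk[\NN A]_{\aa_i}$, so $\pi(x_i) = \lambda_i \tt^{\aa_i}$ for some $\lambda_i \in \kk$ and $\aa_i \in \NN A$. The kernel $I$ is prime because $\kk[\NN A]$ is a domain; and $I$ is generated by binomials because inside each graded piece $\kk[\xx]_\alpha$, the kernel is spanned by monomials $\xx^\uu$ for which some $\lambda_i = 0$ with $u_i > 0$, together with scalar differences $\xx^\uu - \lambda_{\uu,\vv}\xx^\vv$ relating the surviving monomials of the fiber.

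For the nontrivial direction ($\Rightarrow$), assume $I$ is a prime binomial ideal. I first peel off the variables lying in $I$: set $J = \{i : x_i \in I\}$ and write $I = \<x_i : i \in J\> + I'$, where $I'$ is a prime binomial ideal in $\kk[x_i : i \notin J]$ containing no variable (if it contained a monomial, primality would force it to contain a variable). The task then reduces to producing a surjection for $I'$ and extending by $x_i \mapsto 0$ for $i \in J$. Let $R = \kk[x_i : i \notin J]/I'$, a domain in which every $x_i$ is nonzero, and form
$$
  L = \{\uu - \vv \mid \xx^\uu - \lambda \xx^\vv \in I' \text{ for some } \lambda \in \kk^*\}
$$
together with the assignment $\rho(\uu - \vv) = \lambda$.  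Routine checks (using that $R$ is a domain) show $L$ is a subgroup of the ambient exponent lattice and $\rho : L \to \kk^*$ a well-defined character.

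The main obstacle is proving $L$ is saturated.  Suppose $k\ww \in L$ for some $k \geq 1$ and $\ww$ in the ambient lattice; then $(\xx^{\ww^+})^k \equiv \lambda (\xx^{\ww^-})^k$ in $R$ for some $\lambda \in \kk^*$.  Since $\kk$ is algebraically closed, choose a $k$-th root $\mu$ of $\lambda$ in $\kk$ and factor
$$
  0 = (\xx^{\ww^+})^k - \lambda (\xx^{\ww^-})^k = \prod_{\zeta^k = 1} \bigl(\xx^{\ww^+} - \zeta \mu\, \xx^{\ww^-}\bigr) \quad\text{in } R.
$$
Primality of $I'$ places one factor in $I'$, forcing $\ww \in L$.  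Algebraic closedness is used here and only here---for both the $k$-th root of $\lambda$ and the splitting of $t^k - 1$ over $\kk$.

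With $L$ saturated, the quotient map $A : \ZZ^{n-|J|} \onto \ZZ^{n-|J|}/L =: \ZZ^d$ is torsion-free and hence splits, so $\rho$ extends to a character $\tilde\rho$ of the ambient lattice.  Setting $\lambda_i = \tilde\rho(\ee_i)$ and $\aa_i = A\ee_i$, I define $\pi(x_i) = \lambda_i \tt^{\aa_i}$ for $i \notin J$ and $\pi(x_i) = 0$ for $i \in J$.  The containment $I \subseteq \ker \pi$ is immediate from the definitions of $\rho$ and $\tilde\rho$.  For equality, note $R$ is $A$-graded, each component $R_\alpha$ is spanned by images of monomials $\xx^\uu$ with $A\uu = \alpha$, and any two such images differ by the scalar $\rho(\uu - \vv)$; hence $\dim_\kk R_\alpha \leq 1$, with equality exactly when $\alpha \in \NN A$ (nonvanishing uses the domain property).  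The induced $A$-graded surjection $R \onto \kk[\NN A]$ matches $1$-dimensional pieces and so must be an isomorphism, completing the identification of $I$ with $\ker\pi$.
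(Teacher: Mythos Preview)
Your proof is correct. Note, however, that the paper does not actually prove this theorem: it is stated with attribution to Eisenbud--Sturmfels \cite[Corollary~2.6]{binomialIdeals}, and the surrounding text only discusses consequences and refinements (leading to Corollary~\ref{c:prime}). So there is no in-paper proof to compare against directly.

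That said, your argument is essentially the Eisenbud--Sturmfels one, carried out from scratch. The decomposition into the monomial part $\mm_J$ and the lattice part $I'$, the extraction of the partial character $\rho: L \to \kk^*$, the saturation of~$L$ via factoring $(\xx^{\ww^+})^k - \lambda(\xx^{\ww^-})^k$ over the algebraically closed field, and the Hilbert-function comparison at the end all mirror their development (their Theorem~2.1 and Corollaries~2.2--2.6). One small remark: your product $\prod_{\zeta^k=1}(\xx^{\ww^+} - \zeta\mu\,\xx^{\ww^-})$ tacitly assumes $t^k - 1$ is separable; in positive characteristic dividing~$k$ you should instead factor $t^k - \lambda$ directly into (possibly repeated) linear factors over~$\kk$, which still yields a nonzero root and hence $\ww \in L$ by primality. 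With that tweak the argument is uniform across characteristics, matching the generality of the cited result.
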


The surjection $\pi$ in Theorem~\ref{t:prime} need not equal~$\pi_A$.
For example, $\pi(x_j) = 0$ is allowed, so $I$ could contain
monomials.  Furthermore, even when $\pi(x_j) \neq 0$, the image
of~$x_j$ could be $\lambda\tt^\alpha$ for some $\lambda \neq 1$.
Thus, if $\xx^\uu \mapsto \lambda\tt^\alpha$ and $\xx^\vv \mapsto
\mu\tt^\alpha$, then $\xx^\uu - \frac\mu\lambda\xx^\vv \in I$: the
binomial generators of~$I$ need not be pure differences.  However,
assuming $I$ is prime, we are not free to assign the coefficients
$\lambda$ and~$\mu$ at will:
\begin{align*}\textstyle
  \xx^\uu - \frac\mu\lambda\xx^\vv \in I\ \Implies&\ \ \textstyle
  \xx^{\uu+\ww}-\frac\mu\lambda\xx^{\vv+\ww}\in I\text{ for }\ww\in\NN^n
\\\text{and}&\ \ \textstyle
  \xx^{r\uu} - \frac{\mu^r}{\lambda^r}\xx^{r\vv} \in I\text{ for }r\in\NN.
\end{align*}
The first line means that the coefficient on $\xx^\vv$ in
$\xx^\uu-\xx^\vv$ depends only on $\uu-\vv$, and the second means
essentially that the assignment $\uu-\vv \mapsto \mu/\lambda$
constitutes a homomorphism $\ker A \to \kk^*$.  The precise statement
requires a definition.

\begin{defn}\label{d:IrhoJ}
A \emph{character} on a sublattice $L \subseteq \ZZ^n$ is a
homomorphism $\rho: L \to \kk^*$.  If $L \subseteq \ZZ^J$ for some
subset $J \subseteq \{1,\ldots,n\}$, then
\begin{align*}
  I_{\rho,J} &= I_\rho + \mm_J,
\\\text{where }\quad\ \,
  I_\rho &= \<\xx^\uu - \rho(\uu-\vv)\xx^\vv \mid \uu-\vv \in L\>
\\\text{and }\quad
  \mm_J &= \<x_i \mid i \not\in J\>.
\end{align*}
\end{defn}

\begin{cor}\label{c:prime}
A binomial ideal $I \subseteq \kk[\xx]$ with $\kk$ algebraically
closed is prime if and only if it is $I_{\rho,J}$ for a character
$\rho: L \to \kk^*$ defined on a saturated sublattice $L \subseteq
\ZZ^J$.
\end{cor}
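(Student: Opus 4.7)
The plan is to show that the two characterizations of prime binomial ideals are direct translations of one another: the surjection language of Theorem~\ref{t:prime} encodes exactly the data of a saturated sublattice together with a character on it.

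For the ``only if'' direction, I will start from a prime binomial ideal $I$ and invoke Theorem~\ref{t:prime} to obtain a surjective $A$-graded homomorphism $\pi : \kk[\xx] \onto \kk[\NN A]$ with $\ker \pi = I$ and with each $x_j$ homogeneous. Since the homogeneous components of $\kk[\NN A]$ are at most one-dimensional, each $\pi(x_j)$ is either $0$ or $\lambda_j \tt^{\aa_j}$ for some $\lambda_j \in \kk^*$. I set $J = \{j : \pi(x_j) \neq 0\}$, giving $\mm_J \subseteq I$ immediately. Letting $A' = [\aa_j]_{j \in J}$ and $L = \ker A' \subseteq \ZZ^J$, saturation of $L$ follows because $\ZZ^J/L$ embeds in the torsion-free group $\ZZ A \subseteq \ZZ^d$. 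I then define $\rho : L \to \kk^*$ by $\rho(\uu - \vv) = \lambda^\uu / \lambda^\vv$ with $\lambda^\uu := \prod_{j \in J} \lambda_j^{u_j}$; well-definedness and multiplicativity are immediate from rearranging exponents, and $\pi$ annihilates every generator $\xx^\uu - \rho(\uu - \vv)\xx^\vv$ of $I_\rho$ by construction, giving $I_{\rho,J} \subseteq I$.

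The reverse inclusion $I \subseteq I_{\rho,J}$ will come from the $A'$-graded argument used already in the proof of the previous proposition: both ideals are $A'$-graded, the quotient $\kk[\xx]/I_{\rho,J}$ has graded components of dimension at most one (any two monomials supported on $J$ of the same $A'$-degree are proportional modulo $I_{\rho,J}$, while anything involving $x_j$ for $j \notin J$ vanishes modulo $\mm_J$), and the induced surjection $\kk[\xx]/I_{\rho,J} \onto \kk[\xx]/I \cong \kk[\NN A]$ is a degreewise surjection between vector spaces of equal dimension in every graded piece supported on $\NN A$, hence an isomorphism.

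For the ``if'' direction, I will build $\pi$ from the data $(J, L, \rho)$. Saturation of $L$ inside $\ZZ^J$ means $\ZZ^J/L$ is free of some rank $d$; I take $\aa_j \in \ZZ^d$ to be the image of $\ee_j$ and set $A' = [\aa_j]_{j \in J}$, so that $\ker A' = L$. I then define $\pi(x_j) = \lambda_j \tt^{\aa_j}$ for $j \in J$ and $\pi(x_j) = 0$ for $j \notin J$. The step I expect to be the main obstacle is choosing the scalars $\lambda_j$ coherently: I need a homomorphism $\wt{\rho} : \ZZ^J \to \kk^*$ extending $\rho$ from $L$, so that I can set $\lambda_j = \wt{\rho}(\ee_j)$. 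Such an extension exists because $\kk^*$ is a divisible abelian group---and this is precisely where the algebraic closedness of $\kk$ enters. With $\pi$ in hand, Theorem~\ref{t:prime} declares $\ker \pi$ prime, and the same $A'$-graded dimension count as above identifies $\ker \pi$ with $I_{\rho,J}$.
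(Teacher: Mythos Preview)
Your proof is correct and follows essentially the same route the paper intends: unpack the surjection of Theorem~\ref{t:prime} into the data $(J,L,\rho)$ and back again, with the graded dimension count from the proof of the preceding Proposition supplying the equality $\ker\pi = I_{\rho,J}$ in both directions.

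One small point worth tightening concerns the extension of $\rho$ from $L$ to $\ZZ^J$. You invoke divisibility of $\kk^*$ (hence algebraic closedness) to obtain $\wt\rho$, and this certainly works. But notice that you have already used saturation of $L$: since $\ZZ^J/L$ is torsion-free and hence free, the inclusion $L \hookrightarrow \ZZ^J$ splits, so $\Hom(\ZZ^J,\kk^*) \to \Hom(L,\kk^*)$ is surjective for \emph{any} field $\kk$. This is exactly what the Remark following the Corollary in the paper points out. Consequently, your statement that the character extension ``is precisely where the algebraic closedness of $\kk$ enters'' is off: the ``if'' direction (that $I_{\rho,J}$ is prime when $L$ is saturated) holds over any field, since the quotient is a subring of a Laurent polynomial ring. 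Algebraic closedness is genuinely needed only for the ``only if'' direction, and there it enters through Theorem~\ref{t:prime} itself (the Eisenbud--Sturmfels classification), not through any step you perform afterward.
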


In other words, every prime binomial ideal in the polynomial ring
$\kk[\xx]$ over an algebraically closed field~$\kk$ is toric after
forgetting some of the variables (those outside of~$J$) and rescaling
the rest (by the character~$\rho$).

\begin{remark}
Given any sublattice $L \subseteq \ZZ^n$, a character $\rho$ is
defined as a homomorphism $L \to \kk^*$.  On the other hand, rescaling
the variables $x_j$ for $j \in J$ amounts to a homomorphism $\ZZ^J \to
\kk^*$.  When $L \subsetneq \ZZ^J$, there is usually no unique way to
extend $\rho$ to a character $\ZZ^J \to \kk^*$ (there can be a unique
way if $\kk$ has positive characteristic).  However, there is always
at least one way when $L$ is saturated---so the inclusion $L \into
\ZZ^n$ is split---because the natural map $\Hom(\ZZ^J,\kk^*) \to
\Hom(L,\kk^*)$ is surjective.
\end{remark}

\begin{example}
Let $\omega = \frac{1+\sqrt{-3}}2 \in \CC$ be a primitive cube root
of~$1$.  If $L \subseteq \ZZ^4 \subseteq \ZZ^5$ is spanned by the
columns of the matrix~$B$, below, and the character $\rho$ takes the
indicated values on these generators of~$L$, then $I_{\rho,J}$ for $J
= \{1,2,3,4\}$ is as indicated.
$$%
\begin{array}{@{}r@{}}
  B =
  \left[
  \begin{array}{@{\,}rrr}
	-1 &-1 & 0 \\
	 1 & 2 &-1 \\
	 1 &-1 & 2 \\
	-1 & 0 &-1 \\
	 0 & 0 & 0
       \end{array}
  \right]
\\
\\[-2ex]
  \rho:
  \begin{array}{@{\,}rrr}
	\omega^2\ & \omega\ \,& \omega\ \
  \end{array}
\end{array}
\ \Implies\
I_{\rho,\{1,2,3,4\}} = \<bc-\omega^2ad,b^2-\omega ac,c^2-\omega bd,e\>.
$$
For instance, when $\uu = (0,1,1,0)$ and $\vv = (1,0,0,1)$, we get
$\rho(\uu-\vv) = \omega^2$.  Compare this example to the 0123 case in
Example~\ref{e:0123'}.
\end{example}

\section{Monomial ideals and primary binomial ideals}\label{s:primary}

The lattice-point geometry of binomial primary decompostion
generalizes the geometry of monomial ideals.  For primary binomial
ideals, the connection is particularly clear.  To highlight it, this
section discusses what it looks like for a primary binomial ideal to
have a monomial associated prime.  The material in this section is
developed in the context of an arbitrary affine semigroup ring,
because that generality will be crucial in the applications to
binomial ideals in polynomial rings~$\kk[\xx]$.  In return for the
generality, there are no restrictive hypotheses on the characteristic
or algebraic closure of~$\kk$ to contend with; except in
Example~\ref{e:rhoJ} and Theorem~\ref{t:algbinom}, $\kk$ can
be~artbitrary.

\begin{defn}
An ideal $I \subseteq \kk[Q]$ in the monoid algebra of an affine
semigroup~$Q$ over an arbitrary field is a \emph{monomial ideal} if it
is generated by \emph{monomials} $\tt^\alpha$, and $I$ is a
\emph{binomial ideal} if it is generated by \emph{binomials}
$\tt^\alpha - \lambda\tt^\beta$ with $\alpha,\beta \in Q$ and $\lambda
\in \kk$.
\end{defn}

\subsection{Monomial primary ideals}

Given a monomial ideal, it is convenient to have terminology and
notation for certain sets of monomials and lattice points.
\begin{defn}
If $I \subseteq \kk[Q]$ is a monomial ideal, then write $\std(I)$ for
the set of exponent vectors on its \emph{standard monomials}, meaning
those outside of~$I$.
\end{defn}


\begin{example}\label{e:monomPrimary}
Here is a monomial ideal in $\kk[Q]$ for $Q = \NN^2$:
$$%
\psfrag{x}{$\scriptstyle x$}
\psfrag{z}{$\scriptstyle z$}
\begin{array}{cc}
  \begin{array}{c}
  I = \<x^6, x^5z, x^2z^3,z^4\> \subseteq \kk[x,z]
  \qquad\longleftrightarrow\ \
  \end{array}
&
  \begin{array}{c}\includegraphics{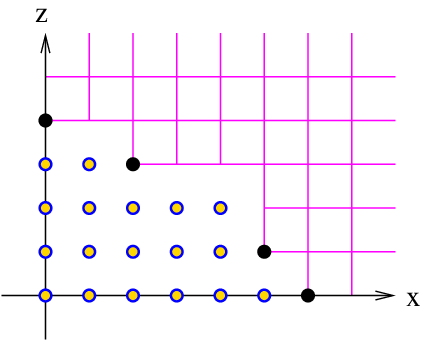}\end{array}
\end{array}
$$
The reason for using $z$ instead of~$y$ will become clear in
Example~\ref{e:monomPrimary'}.  The bottom of the cross-hatched region
is the \emph{staircase} of~$I$; its lower corners are the (lattice
points corresponding to) the generators of~$I$.  The lattice points
below the staircase correspond to the standard monomials of~$I$.
\end{example}

\begin{example}\label{e:monomPrimary'}
The same monomial generators can result in a higher-dimensional
picture if the ambient monoid is different.  Consider the generators
from Example~\ref{e:monomPrimary} but in $\kk[Q]$ for $Q = \NN^3$:
$$%
\psfrag{x}{$\scriptstyle x$}
\psfrag{y}{$\scriptstyle y$}
\psfrag{z}{$\scriptstyle z$}
\begin{array}{cc}
  \begin{array}{c}
  I = \<x^6, x^5z, x^2z^3,z^4\> \subseteq \kk[x,y,z]
  \qquad\longleftrightarrow\ \
  \end{array}
&
  \begin{array}{c}\includegraphics{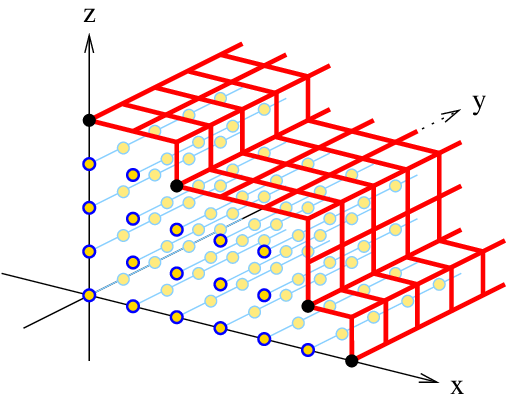}\end{array}
\end{array}
$$
The ``front face'' of the picture---corresponding to the
$xz$-plane---coincides with Example~\ref{e:monomPrimary}.  The surface
cross-hatched in thick lines is the staircase of~$I$; its minimal
elements again correspond to the generators of~$I$, drawn as solid
dots.  Below the staircase sit the lattice points corresponding to the
standard monomials of~$I$.  There are infinitely many standard
monomials, but they occur along finitely many rays parallel to the
$y$-axis, each emanating from a point drawn as a bold hollow dot;
these are the points in~$\std(I)$ in the $xz$-plane.
\end{example}

\begin{example}\label{e:face}
In the square cone case, Example~\ref{e:hartcone}, let $a,b,c,d$ be
the generators of the affine semigroup ring, as indicated in the
figure there.  Thus $\kk[Q] = \kk[a,b,c,d]/\<ad-bc\>$.  The monomial
ideal $\<c,d\> \subseteq \kk[Q]$ is prime; in fact, the composite map
$\kk[a,b] \into \kk[Q] \onto \kk[Q]/\<c,d\>$ is an isomorphism.
\end{example}

The phenomenon in Example~\ref{e:face} is general; the statement
requires a definition.

\begin{defn}
A \emph{face} $F \subseteq Q$ of an affine semigroup $Q \subseteq
\ZZ^d$ is a subset $F = Q \cap H$ obtained by intersecting~$Q$ with a
halfspace $H \subseteq \RR^d$ such that $Q \subseteq H^+$ is contained
in one of the two closed halfspaces $H^+$, $H^-$ defined by~$H$
in~$\RR^d$.
\end{defn}

\begin{lemma}\label{l:face}
A monomial ideal~$I$ in an affine semigroup ring is prime $\iff
\std(I)$~is~a face.  The prime $\pp_F$ for $F \subseteq Q$ induces an
isomorphism $\kk[F] \into \kk[Q] \onto \kk[Q]/\pp_F$.
\end{lemma}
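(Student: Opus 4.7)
The plan is to work with the characterization of a face of~$Q$ as a submonoid $F \subseteq Q$ satisfying (i)~$F + F \subseteq F$ and (ii)~whenever $\alpha + \beta \in F$ with $\alpha,\beta \in Q$, both $\alpha$ and~$\beta$ lie in~$F$. The equivalence of this monoid-theoretic description with the geometric definition via supporting hyperplanes is a standard consequence of polyhedral combinatorics for rational cones, so I would cite it rather than re-prove it.

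For the ``if'' half together with the isomorphism claim, I would start with a face $F \subseteq Q$ and define $\pp_F = \<\tt^\alpha \mid \alpha \in Q \minus F\>$. The key step is to exhibit a $\kk$-algebra surjection $\pi_F \colon \kk[Q] \onto \kk[F]$ sending $\tt^\alpha$ to $\tt^\alpha$ when $\alpha \in F$ and to~$0$ otherwise. Multiplicativity on a product $\tt^\alpha \tt^\beta = \tt^{\alpha+\beta}$ would be verified by cases: closure~(i) handles $\alpha,\beta \in F$, while the contrapositive of~(ii) says that if one of $\alpha,\beta$ lies outside~$F$ then so does~$\alpha+\beta$, making both sides vanish. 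Clearly $\ker \pi_F = \pp_F$, and $\pi_F$ is split by the natural inclusion $\kk[F] \into \kk[Q]$, so the composite $\kk[F] \into \kk[Q] \onto \kk[Q]/\pp_F$ is an isomorphism. Since $\kk[F]$ sits inside the Laurent polynomial ring $\kk[\ZZ^d]$ and is therefore a domain, $\pp_F$ is prime.

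For the ``only if'' half, I would take a monomial prime $I \subseteq \kk[Q]$ and set $F = \std(I)$. Conditions (i) and~(ii) should fall out immediately from primeness and the definition of an ideal: if $\alpha,\beta \in F$ then $\tt^\alpha, \tt^\beta \notin I$, so primeness forces $\tt^{\alpha+\beta} \notin I$, giving~(i); and if $\tt^{\alpha+\beta} \notin I$ then $\tt^\alpha \cdot \tt^\beta \notin I$ forces both factors out of~$I$ (any ideal containing a factor of a monomial would contain the whole monomial), giving~(ii). Thus $F$ is a face, and $I = \pp_F$ since both are the monomial ideal spanned by $\{\tt^\alpha \mid \alpha \in Q \minus F\}$.

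The only substantive obstacle I anticipate is the equivalence between the monoid-theoretic and geometric notions of face. In a fully self-contained treatment one would construct the supporting hyperplane from a submonoid satisfying (i) and~(ii)---for example, by choosing a linear functional on $\RR^d$ that vanishes on $\RR F$ and is strictly positive on generators of~$Q$ outside~$F$, whose existence follows from $\RR_{\geq 0} F$ being a face of the rational polyhedral cone $\RR_{\geq 0} Q$. Once this polyhedral background is granted, both halves of the lemma reduce to direct unpacking of the definitions of prime ideal and of~$\pp_F$.
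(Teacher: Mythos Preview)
Your proof is correct. The paper itself does not give a proof of this lemma; it simply refers the reader to \cite[\S7.2]{cca} for the argument and surrounding background. Your direct verification via the monoid-theoretic characterization of faces (closure under addition together with the ``hereditary'' condition that $\alpha+\beta\in F\Rightarrow\alpha,\beta\in F$) is exactly the standard route taken in that reference, and your identification of the one nontrivial ingredient---the equivalence of the monoid-theoretic and supporting-hyperplane descriptions of a face---is apt. One minor point you left implicit: since prime ideals are proper, $\tt^0=1\notin I$, so $0\in\std(I)$ and your set~$F$ is indeed a submonoid rather than merely closed under addition.
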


For a proof of the lemma, and lots of additional background on the
connections between faces of cones and the algebra of affine semigroup
rings, see \cite[\S7.2]{cca}.

Before jumping to the question of when an arbitrary binomial ideal is
primary, let us first consider the monomial case.  In a polynomial
ring~$\kk[\xx]$, there is an elementary algebraic description as well
as a satisfying geometric one.  Both will be important in later
sections, but it is the geometric description that generalizes most
easily to arbitrary affine semigroup rings.

\begin{prop}\label{p:monomPrimary}
A monomial ideal $I \subseteq \kk[\xx]$ is primary if and only if
$$%
  I = \<x_{i_1}^{m_1}, \ldots, x_{i_r}^{m_r}, \text{ some other
  monomials in } x_{i_1}, \ldots, x_{i_r}\>.
$$
A monomial ideal $I \subseteq \kk[Q]$ for an arbitrary affine
semigroup $Q \subseteq \ZZ^d$ is $\pp_F$-primary for a face $F
\subseteq Q$ if and only if there are elements
$\alpha_1,\ldots,\alpha_\ell \in Q$ with
$$%
  \std(I) = \bigcup_{k=1}^\ell (\alpha_k + \ZZ F) \cap Q,
$$
where $\ZZ F$ is the subgroup of~$\ZZ^d$ generated by~$F$.
\end{prop}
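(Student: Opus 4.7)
The polynomial-ring statement will follow as the $Q=\NN^n$ specialization of the affine-semigroup statement: faces of~$\NN^n$ are the coordinate subsemigroups~$\NN^J$, and then $(\alpha+\ZZ^J)\cap\NN^n$ is a shift $\alpha'+\NN^J$ supported away from the ``primary'' variables $\{x_i:i\notin J\}$, which is exactly the form of~$I$ in the first assertion. So the plan is to prove the affine-semigroup characterization and read off the polynomial case. The global strategy is to use the $\ZZ^d$-grading of $\kk[Q]/I$ together with Lemma~\ref{l:face} to reduce every primariness/radical question to combinatorics of~$\std(I)$: a homogeneous element $\tt^\beta$ is nilpotent modulo~$I$ iff $r\beta\notin\std(I)$ for some~$r$, and is a zerodivisor iff $\tt^\beta\tt^\gamma\in I$ for some $\gamma\in\std(I)$.

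For ($\Leftarrow$), assume $\std(I)=\bigcup_{k=1}^\ell(\alpha_k+\ZZ F)\cap Q$. Two geometric facts about faces will do the work: faces are root-closed ($r\beta\in F\Rightarrow\beta\in F$, by evaluating a supporting linear functional) and $\ZZ F\cap Q=F$. Together these force every $\beta\in Q\minus F$ to have infinite order in $\ZZ Q/\ZZ F$, so $r\beta$ eventually leaves the finitely many cosets meeting $\std(I)$ and hence $\tt^\beta\in\sqrt I$; conversely $\beta\in F$ gives $r\beta\in F\subseteq\std(I)$, so $\tt^\beta\notin\sqrt I$. This identifies $\sqrt I=\pp_F$. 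Primariness then follows because for $\beta\in F$ and any $\gamma\in\std(I)$, if $\gamma\in\alpha_k+\ZZ F$ then $\gamma+\beta$ remains in the same coset and in~$Q$, hence $\gamma+\beta\in\std(I)$, so $\tt^\beta$ is a nonzerodivisor.

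For ($\Rightarrow$), assume $I$ is $\pp_F$-primary; the conclusion splits into a closure step and a finiteness step. \emph{Closure}: given $\alpha\in\std(I)$, $\phi=\phi_1-\phi_2\in\ZZ F$ with $\phi_i\in F$, and $\gamma:=\alpha+\phi\in Q$, the plan is to bootstrap through $\alpha+\phi_1$. First, $\tt^{\alpha+\phi_1}\notin I$: otherwise $\tt^{\phi_1}\tt^\alpha\in I$ with $\tt^\alpha\notin I$ would force $\tt^{\phi_1}\in\pp_F$, contradicting $\phi_1\in F$. Second, $\tt^\gamma\notin I$: otherwise $\tt^\gamma\tt^{\phi_2}=\tt^{\alpha+\phi_1}\in I$, contradicting the previous line. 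Thus $\std(I)$ decomposes as a union of sets of the form $(\alpha+\ZZ F)\cap Q$.

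The main obstacle is \emph{finiteness} of the number of cosets appearing. The approach is to leverage Noetherianness of~$\kk[Q]$: since $I$ is finitely generated and $\sqrt I=\pp_F$, one has $\pp_F^m\subseteq I$ for some~$m$, which bounds how far a standard exponent can wander from~$F$ (it cannot absorb an $m$-fold sum of generators of~$\pp_F$). The cleanest repackaging is to show that $\kk[Q]/I$ is a finitely generated $\kk[F]$-module, where $\kk[F]\cong\kk[Q]/\pp_F$: the closure step has already exhibited $\tt^\phi$ for $\phi\in F$ as nonzerodivisors, and a Noether-normalization-style argument using $\pp_F^m\subseteq I$ together with the matching Krull dimensions $\dim\kk[Q]/I=\dim F$ delivers module-finiteness. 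Any finite $\kk[F]$-generating set then lies in finitely many $\ZZ F$-cosets, producing the required $\alpha_1,\ldots,\alpha_\ell$.
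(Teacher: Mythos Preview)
Your proof is correct and takes a different, more elementary route than the paper. The paper does not prove this proposition directly at all: it dismisses the polynomial-ring half as ``a standard exercise in commutative algebra'' and obtains the affine-semigroup half as the specialization of the later Theorem~\ref{t:monPrimary} on $\pp_F$-primary \emph{binomial} ideals to the case where $I$ is monomial (so all connected components of~$G_I$ are singletons). Your argument instead works entirely inside the monomial world, using the $\ZZ^d$-grading to reduce primariness to monomial zerodivisor tests, root-closedness of faces and $\ZZ F\cap Q=F$ to pin down $\sqrt I=\pp_F$, and an integrality argument for finiteness. What your approach buys is self-containment---no need for the congruence/graph machinery of Section~\ref{s:cong}---while the paper's approach buys economy, since Theorem~\ref{t:monPrimary} must be proved anyway. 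One small sharpening: your ``Noether-normalization-style'' finiteness step is correct but phrased vaguely; the clean statement is that every semigroup generator $a_i\in Q\minus F$ has $\tt^{a_i}\in\pp_F=\sqrt I$, so some power $(\tt^{a_i})^{N_i}\in I$, making $\kk[Q]/I$ an integral, finitely generated $\kk[F]$-algebra and hence a finite $\kk[F]$-module. You do not need the $\pp_F^m\subseteq I$ repackaging or any dimension count.
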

\begin{proof}
The statement about $\kk[\xx]$ is a standard exercise in commutative
algebra.  The statement about $\kk[Q]$ is the special case of
Theorem~\ref{t:monPrimary}, below, in which the binomial ideal~$I$ is
generated by monomials.
\end{proof}

What does a set of the form $(\alpha + \ZZ F) \cap Q$ look like?
Geometrically, it is roughly the (lattice points in the) intersection
of an affine subspace with a cone.  In the polynomial ring case, where
$Q = \NN^n$, a set $(\alpha + \ZZ F) \cap Q$ is always $\beta + F$ for
some lattice point $\beta \in \NN^n$: the intersection of a translate
of a coordinate subspace with the nonnegative orthant is a translated
orthant.  In fact, $F = \NN^J = \NN^n \cap \ZZ^J$ for some subset $J
\subseteq \{1,\ldots,n\}$, and then $\beta$ is obtained from $\alpha$
by setting all coordinates from~$J$ to~$0$ (if $\alpha$ has negative
coordinates outside of~$J$, then $\alpha + \ZZ F$ fails to
meet~$\NN^n$).  For general~$Q$, on the other hand, $(\alpha + \ZZ F)
\cap Q$ need not be a translate of~$F$.

\begin{example}\label{e:primary}
The prime ideal in Example~\ref{e:face} corresponds to the face~$F$
consisting of the nonnegative integer combinations of $e_1 + e_3$
and~$e_3$, where $e_1,e_2,e_3$ are the standard basis of~$\ZZ^3$.  In
terms of the depiction in Example~\ref{e:hartcone}, these are the
lattice points in~$Q$ that lie in the $xz$-plane.  The subgroup $\ZZ F
\subseteq \ZZ^3$ comprises all lattice points in the $xz$-plane.  Now
suppose that $\alpha = e_2$, the first lattice point along the
$y$-axis.  Then
$$%
\psfrag{a}{$\scriptstyle a$}
\psfrag{b}{$\scriptstyle b$}
\psfrag{c}{$\scriptstyle c$}
\psfrag{d}{$\scriptstyle d$}
\psfrag{x}{$\scriptstyle x$}
\psfrag{y}{$\scriptstyle y$}
\psfrag{z}{$\scriptstyle z$}
\begin{array}{cc@{}}
  \begin{array}{c@{}}
  \\
  \Bigg(
  \left[
  \begin{array}{c}
	 0\\
	 1\\
	 0
  \end{array}
  \right]
  + \ZZ\cdot\{xz\}\text{-plane}
  \Bigg) \cap Q
  \quad\ \longleftrightarrow
  \end{array}
&
  \begin{array}{@{}c@{}}\includegraphics{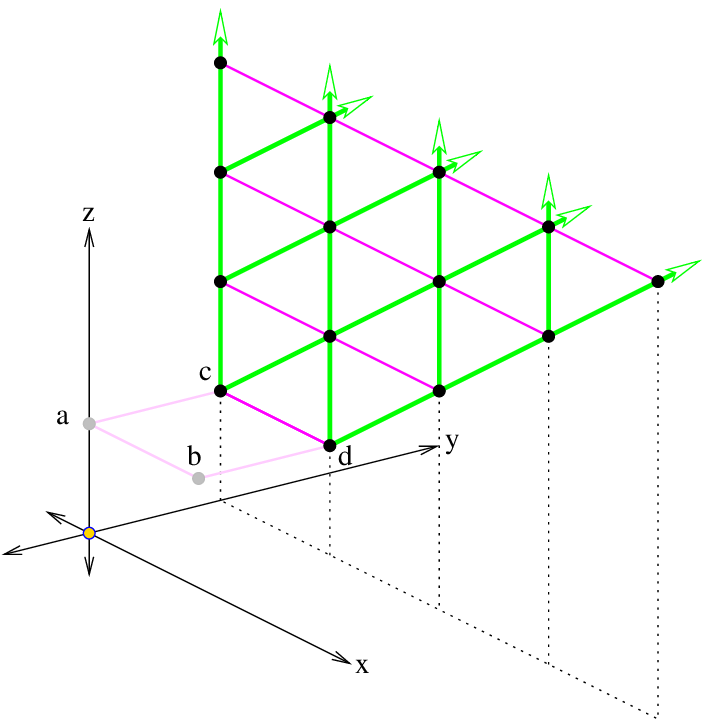}\end{array}
\end{array}
$$
is a union of two translates of~$F$.  For reference, the square over
which $Q$ is the cone is drawn lightly, while dotted lines fill out
part of the vertical plane $\alpha + \ZZ F$.
\end{example}

\begin{prop}
Every monomial ideal $I \subseteq \kk[Q]$ in an affine semigroup ring
has a unique minimal primary decompostion $I = P_1 \cap \cdots \cap
P_r$ as an intersection of monomial primary ideals~$P_i$ with distinct
associated primes.
\end{prop}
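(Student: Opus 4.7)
My plan is to leverage the $\ZZ^d$-grading of $\kk[Q]$: monomial ideals are exactly the $\ZZ^d$-graded ideals. Standard graded commutative algebra says that in a Noetherian $\ZZ^d$-graded ring, any graded ideal has a minimal primary decomposition with graded primary components and graded associated primes. Here $\kk[Q]$ is Noetherian because $Q$ is finitely generated, and by Lemma~\ref{l:face} its graded primes are exactly the face primes $\pp_F$, each of which is monomial. Hence a monomial ideal has a primary decomposition by monomial $\pp_F$-primary ideals, which by Proposition~\ref{p:monomPrimary} are precisely the monomial ideals whose standard sets are finite unions of $\ZZ F$-translates meeting~$Q$. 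Merging components sharing a common associated prime and discarding redundant ones yields existence with distinct associated primes.

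For uniqueness, the set of associated primes is intrinsic to~$I$, so it suffices to show that for each face~$F$ with $\pp_F \in \mathrm{Ass}(I)$, the monomial $\pp_F$-primary component $P_F$ is determined by~$I$. The key observation from Proposition~\ref{p:monomPrimary} is that $\std(P_F)$ is $\ZZ F$-closed inside~$Q$: whenever $\alpha \in \std(P_F)$, the entire translate $(\alpha + \ZZ F) \cap Q$ lies in $\std(P_F)$. Define
$$
  \std_F(I) = \bigl\{\alpha \in \std(I) \,:\, (\alpha + \ZZ F) \cap Q \subseteq \std(I)\bigr\},
$$
the largest $\ZZ F$-closed subset of $\std(I)$. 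The inclusion $\std(P_F) \subseteq \std_F(I)$ is immediate from $I \subseteq P_F$ together with $\ZZ F$-closure. For the reverse inclusion, given $\alpha \in \std_F(I)$, every lattice point of $(\alpha + \ZZ F) \cap Q$ lies in $\std(I) = \bigcup_j \std(P_j)$, and I would analyze which $P_j$ can accommodate such a translate, using that each $\std(P_j)$ is a union of $\ZZ F_j$-translates, to force $\alpha$ into $\std(P_F)$.

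The step I expect to be hardest is this reverse inclusion, especially when the associated faces are comparable (say $F \subsetneq F'$), since then a $\ZZ F$-translate can be distributed across several translates $(\beta_k + \ZZ F') \cap Q$ contained in $\std(P_{F'})$, obscuring which component is responsible for~$\alpha$. To handle this, I would pass through the auxiliary notion of irreducible monomial decomposition: first show that every monomial ideal in $\kk[Q]$ is uniquely a finite irredundant intersection of \emph{irreducible} monomial ideals, namely those whose standard sets are single translates $(\alpha + \ZZ F) \cap Q$; then recover the unique monomial primary decomposition by grouping irreducible components according to the face~$F$ that controls them. This parallels the classical argument in $\kk[\xx]$ and cleanly separates the delicate interaction between nested face primes from the routine ``same associated prime'' grouping that merges components into the final form.
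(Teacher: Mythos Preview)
Your final plan---reduce to a unique irredundant irreducible monomial decomposition and then intersect the irreducible components sharing a given associated prime---is exactly the paper's proof sketch. The paper likewise notes that existence of irreducible decomposition is general noetherian algebra, while uniqueness is special to monomial ideals in affine semigroup rings; it obtains that uniqueness from the uniqueness of \emph{irreducible resolutions} in \cite[Theorem~2.4]{irredres} (see \cite[Chapter~11]{cca}), a step you leave as an unspecified ``show that''. So the outline is right, but the substantive content of the uniqueness statement lives precisely in the lemma you have not proved.

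Your first attempt, however, via the set $\std_F(I)$, is not merely ``hard'' but actually false for embedded primes. Take $I = \langle x^2, xy\rangle \subseteq \kk[x,y]$ with monomial primary decomposition $I = \langle x\rangle \cap \langle x^2, y\rangle$. For the embedded prime $\pp_F = \langle x,y\rangle$ one has $F = \{0\}$, so $\ZZ F = \{0\}$ and $\std_F(I) = \std(I)$, which contains the entire $y$-axis; but $\std(\langle x^2,y\rangle) = \{(0,0),(1,0)\}$. Thus the inclusion $\std_F(I) \subseteq \std(P_F)$ you aim to prove fails outright. The correct $P_F$ for an embedded face is not recoverable from $\std(I)$ by any $\ZZ F$-closure operation alone; one genuinely needs the finer irreducible pieces, which is why both you (eventually) and the paper pass through them. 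A smaller side issue: irreducible monomial ideals do \emph{not} have standard sets equal to a single translate $(\alpha + \ZZ F)\cap Q$; already $\langle x^2,y^3\rangle \subseteq \kk[x,y]$ is irreducible with a $6$-element standard set. So that characterization also needs repair before the grouping argument can proceed.
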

\begin{proof}[Proof sketch]
$I$ has a unique irredundant decompostion $I = W_1 \cap \cdots \cap
W_s$ as an intersection of \emph{irreducible monomial ideals}~$W_j$.
The existence of an irredundant irreducible decompostion can be proved
the same way irreducible decompostions are produced for arbitrary
submodules of noetherian modules.  The uniqueness of such a
decompostion, on the other hand, is special to monomial ideals in
affine semigroup rings \cite[Corollary~11.5]{cca}; it follows from the
uniqueness of \emph{irreducible resolutions}
\cite[Theorem~2.4]{irredres}.  See \cite[Chapter~11]{cca} for details.

Given the uniqueness properties of minimal monomial irreducible
decompostions, the (unique) monomial primary components are obtained
by intersecting all irreducible components sharing a given associated
prime.
\end{proof}

\begin{remark}
In polynomial rings, uniqueness of monomial irreducible decompostion
occurs for approximately the same reason that monomial ideals have
unique minimal monomial generating sets: the partial order on
irreducible ideals is particularly simple
\cite[Proposition~1.4]{rejecta}.  See \cite[\S5.2]{cca} for an
elementary derivation of existence and uniqueness of monomial
irreducible decompostion by Alexander duality.
\end{remark}

\subsection{Congruences on monoids}\label{s:cong}

The uniqueness of irreducible and primary decompostion of monomial
ideals rests, in large part, on the fine grading on~$\kk[Q]$, in which
the nonzero components $\kk[Q]_\alpha$ have dimension~$1$ as vector
spaces over~$\kk$.  Similar gradings are available for quotients
modulo binomial ideals, except that the gradings are by general
noetherian commutative monoids, rather than by free abelian groups or
by affine semigroups.  Our source for commutative monoids is Gilmer's
excellent book \cite{gilmer}.  For the special case of affine
semigroups, by which we mean finitely generated submonoids of free
abelian groups, see \cite[Chapter~7]{cca}.

For motivation, recall from Lemma~\ref{l:fiber} that the fibers of a
monoid morphism from $\NN^n$ to~$\ZZ^d$ have nice structure, and that
the polynomial ring $\kk[\xx]$ becomes graded by~$\ZZ^d$ via such a
morphism.  The fibers are the equivalence classes in an equivalence
relation, as is the case for any map $\pi: Q \to Q'$ of sets; but when
$\pi$ is a morphism of monoids, the equivalence relation satisfies an
extra condition.

\begin{defn}\label{d:congruence}
A \emph{congruence} on a commutative monoid~$Q$ is an equivalence
relation $\til$ that is \emph{additively closed}, in the sense that
$$%
  u \,\sim\, v\ \implies\ u\!+\!w \,\sim\, v\!+\!w
  \quad\text{for all } w \in Q.
$$
The \emph{quotient} $Q/\til$ is the set of equivalence classes
under addition.
\end{defn}

\begin{lemma}
The quotient $\oQ = Q/\til$ of a monoid by a congruence is a monoid.
Any congruence $\til$ on~$Q$ induces a $\oQ$-grading on the
\emph{monoid algebra} $\kk[Q] = \bigoplus_{u \in Q} \kk
\cdot\nolinebreak \ttt^u$ in which the \emph{monomial} $\ttt^u$ has
\emph{degree}~$\ol u \in \oQ$ whenever $u \mapsto \ol u$.
\end{lemma}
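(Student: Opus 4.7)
The plan is to treat the two assertions in order. For the first, I would define addition on $\oQ$ by the natural formula $\ol u + \ol v := \ol{u+v}$, and then the only substantive point is to verify that this operation is well-defined on equivalence classes. This is precisely what the additive closure condition in Definition~\ref{d:congruence} buys: if $u \sim u'$ and $v \sim v'$, then additively closedness applied twice gives $u+v \sim u'+v$ and $u'+v \sim u'+v'$, so transitivity of~$\til$ yields $u+v \sim u'+v'$, i.e.\ $\ol{u+v} = \ol{u'+v'}$. Associativity and commutativity of addition on~$\oQ$ are inherited from~$Q$ by passing to classes, and the class $\ol 0$ of the identity $0 \in Q$ is an identity in~$\oQ$ because $\ol 0 + \ol u = \ol{0+u} = \ol u$. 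This shows $\oQ$ is a commutative monoid.

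For the grading, I would define, for each class $\ol u \in \oQ$,
$$
  \kk[Q]_{\ol u} \;=\; \bigoplus_{v \in \ol u} \kk\cdot \ttt^v,
$$
the $\kk$-span of the monomials whose exponents lie in the class $\ol u$. Since the congruence classes partition~$Q$, the direct sum decomposition
$$
  \kk[Q] \;=\; \bigoplus_{u \in Q} \kk\cdot \ttt^u
          \;=\; \bigoplus_{\ol u \in \oQ} \kk[Q]_{\ol u}
$$
is immediate as $\kk$-vector spaces. The one remaining thing to check is that this decomposition is compatible with multiplication, i.e.\ $\kk[Q]_{\ol u}\cdot \kk[Q]_{\ol v} \subseteq \kk[Q]_{\ol u + \ol v}$. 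It suffices to check this on monomials: if $v_1 \in \ol u$ and $v_2 \in \ol v$, then $\ttt^{v_1}\ttt^{v_2} = \ttt^{v_1+v_2}$, and by the well-definedness established above, $\ol{v_1+v_2} = \ol u + \ol v$, so $\ttt^{v_1+v_2} \in \kk[Q]_{\ol u + \ol v}$. This is exactly the $\oQ$-graded condition in Definition~\ref{d:A-graded} (with $\oQ$ in place of $\ZZ A$).

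The only genuinely nontrivial step is the well-definedness of the quotient operation, and that is a direct application of the defining property of a congruence; everything else is bookkeeping inherited from $Q$ or from the monomial basis of~$\kk[Q]$.
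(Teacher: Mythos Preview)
Your proof is correct and follows exactly the natural approach the paper has in mind; the paper itself simply declares the lemma ``an easy exercise'' and notes that the key input is the multiplication rule $\ttt^u\ttt^v = \ttt^{u+v}$, which is precisely what you use to verify the graded-multiplication condition. You have filled in the details that the paper omits.
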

\begin{proof}
This is an easy exercise.  It uses that the multiplication on~$\kk[Q]$
is given by $\ttt^u \ttt^v = \ttt^{u+v}$ for $u,v \in Q$.
\end{proof}

\begin{defn}\label{d:induced}
In any monoid algebra $\kk[Q]$, a binomial ideal $I \subseteq \kk[Q]$
generated by binomials $\ttt^u - \lambda\ttt^v$ with $\lambda \in \kk$
\emph{induces} a congruence~$\til$ (often denoted by~$\til_I$)~in~which
$$%
  u \sim v \text{ if } \ttt^u - \lambda\ttt^v \in I \text{ for some }
  \lambda \neq 0.
$$
\end{defn}

\begin{lemma}\label{l:hilb}
Fix a binomial ideal $I \subseteq \kk[Q]$ in a monoid algebra.  Then
$I$ and\/ $\kk[Q]/I$ are both graded by $\oQ = Q/\til$.  The
\emph{Hilbert function} $\oQ \mapsto \NN$, which for any $\oQ$-graded
vector space~$M$ takes $\ol q \mapsto \dim_\kk\,M_\ol q$, satisfies
$$%
  \dim_\kk\,(\kk[Q]/I)_\ol q\ =\
  \begin{cases}
    0 &\text{if \,} \ol q = \{u \in Q \mid \ttt^u \in I\}
  \\1 &\text{otherwise.}
  \end{cases}
$$
\end{lemma}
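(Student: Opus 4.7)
The plan is to first establish that $I$, and hence $\kk[Q]/I$, is genuinely $\oQ$-graded, and then to read off the Hilbert function by a direct case analysis on the equivalence classes of~$\til$. For the grading, I would observe that each binomial generator $\ttt^u - \lambda\ttt^v$ of~$I$ is $\oQ$-homogeneous: when $\lambda \neq 0$ the defining property of the induced congruence (Definition~\ref{d:induced}) forces $u \sim v$, so $\ol u = \ol v$ and both monomials share the same $\oQ$-degree, while when $\lambda = 0$ the generator is a single monomial and is trivially homogeneous. Since an ideal generated by homogeneous elements is homogeneous, $I$ is $\oQ$-graded, and the quotient $\kk[Q]/I$ inherits the grading componentwise.

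For the Hilbert function, the key preliminary observation is that the set $Z = \{u \in Q \mid \ttt^u \in I\}$ is either empty or coincides with a single equivalence class of~$\til$. Indeed, if $\ttt^u, \ttt^v \in I$ then $\ttt^u - 1\cdot\ttt^v \in I$, so $u \sim v$; conversely, if $u \sim v$ with $\ttt^u \in I$ and witness $\ttt^u - \lambda\ttt^v \in I$ (with $\lambda \neq 0$), then $\lambda\ttt^v = \ttt^u - (\ttt^u - \lambda\ttt^v) \in I$, forcing $\ttt^v \in I$ as well. Hence within any equivalence class either every monomial lies in~$I$ or none does. If $\ol q = Z$, then $\kk[Q]_{\ol q} = \bigoplus_{u \in \ol q}\kk\cdot\ttt^u$ is contained in~$I$, so its image in $\kk[Q]/I$ vanishes. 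Otherwise, fix $u_0 \in \ol q$ with $\ttt^{u_0} \notin I$; for any other $u \in \ol q$ the relation $u \sim u_0$ supplies $\lambda \in \kk^*$ with $\ttt^u - \lambda\ttt^{u_0} \in I$, so in the quotient $[\ttt^u] = \lambda[\ttt^{u_0}]$. Therefore $(\kk[Q]/I)_{\ol q}$ is spanned by $[\ttt^{u_0}]$, which is nonzero by the choice of~$u_0$, and so has dimension exactly~$1$.

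The only nontrivial step is the dichotomy in the middle paragraph: verifying that monomial membership in~$I$ is a class invariant of~$\til$. Once this is in hand, both dimension cases fall out of the definition of the induced congruence almost mechanically, with no use of noetherian, finite-generation, or closed-field hypotheses—reflecting the fact that the lemma holds over an arbitrary field and for an arbitrary commutative monoid.
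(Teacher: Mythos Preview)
Your argument is correct and complete. The paper itself does not supply a proof of this lemma, remarking only that it is ``another simple exercise''; your write-up is precisely the exercise the author has in mind, carried out carefully. In particular, your verification that membership of $\ttt^u$ in~$I$ is a $\til$-class invariant (so that the monomial class is well-defined) is the one point worth spelling out, and you handle it correctly using that $u \sim v$ always admits a direct witness $\ttt^u - \lambda\ttt^v \in I$ with $\lambda \neq 0$, since the relation in Definition~\ref{d:induced} is already transitive.
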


The proof of the lemma is another simple exercise.  To rephrase, it
says that every pair of monomials in a given congruence class
under~$\til_I$ are equivalent up to a nonzero scalar modulo~$I$, and
the only monomials mapping to~$0$ are in~$I$.  A slightly less
set-theoretic and more combinatorial way to think about congruences
uses graphs.

\begin{defn}\label{d:graph}
Any binomial ideal $I \subseteq \kk[Q]$ defines a graph~$G_I$ whose
vertices are the elements of the monoid~$Q$ and whose (undirected)
edges are the pairs $(u,v) \in Q \times Q$ such that $\ttt^u -
\lambda\ttt^v \in I$ for some nonzero $\lambda \in \kk$.  Write $\pi_0
G_I$ for the set of connected components of~$G_I$.
\end{defn}

Thus $C \in \pi_0 G_I$ is the same thing as a congruence class
under~$\til_I$.  The moral of the story is that combinatorics of the
graph~$G_I$ controls the (binomial) primary decompostion of~$I$.

\begin{example}\label{e:graph}
Each of the two binomial generators of the ideal
$$%
\psfrag{x}{$\scriptstyle x$}
\psfrag{y}{$\scriptstyle y$}
\begin{array}{cc}
  \begin{array}{c}
  I = \<{\color{darkpurple}x^2-xy}, {\color{darkgreen}xy-y^2}\>
  \subseteq \kk[x,y]
  \qquad\Implies\ \
  \end{array}
&
  G_I\ \ = \begin{array}{c}\includegraphics{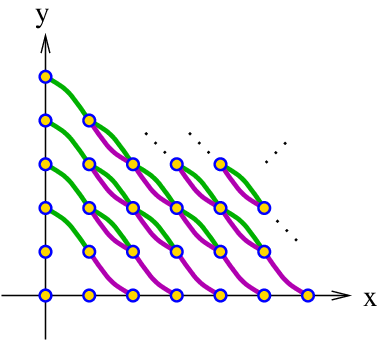}\end{array}
\end{array}
$$%
determines a collection of edges of the graph~$G_I$, indicated in the
figure, by additivity of the congruence~$\til_I$.  In reality, $G_I$
has many more edges than those depicted: since~$\til_I$ is an
equivalence relation, every connected component is a complete graph on
its vertex set.  However, in examples, it is convenient to draw---and
more helpful to see---only edges determined by monomial multiples of
generating binomials.

The connected components of~$G_I$ are the fibers of the monoid
morphism from $\NN^2$~to
$$%
  \NN^2/\til_I\ =\ \begin{array}{c}\includegraphics{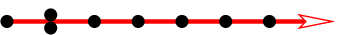}\end{array}
$$
the monoid~$\NN$ with the element~$1$ ``doubled''.  The ideal $I$ has
primary decomposition
$$%
  I = \<x^2, xy, y^2\> \cap \<x - y\>.
$$
The first primary component reflects the three singleton components
of~$G_I$ near the origin.  The other primary component reflects the
diagonal connected components of~$G_I$ marching off to infinity.

Although the $\NN$-graded Hilbert function of $\kk[x,y]/I$ takes the
values $1,2,1,1,1,\ldots$, the $\ol\NN{}^2$-graded Hilbert function
takes only the value~$1$.
\end{example}

\begin{example}
When $I = I_A$ is the toric ideal for a matrix~$A$, the connected
components of~$G_I$ are the fibers~$F_\alpha$ for $\alpha \in \NN A$.
\end{example}

\begin{example}\label{e:rhoJ}
If $I = I_{\rho,J}$ is a binomial prime in a polynomial
ring~$\kk[\xx]$ over an algebraically closed field~$\kk$, and $C \in
\pi_0 G_I$ is a connected component, then either $C = \NN^n \minus
\NN^J$ or else $C = (u + L) \cap \NN^J$ for some $u \in \NN^J$.  When
$\rho$ is the trivial (only) character on the
lattice $L = \{0\} \subseteq \ZZ^{\{3\}}$ and $J = \{3\} \subseteq
\{1,2,3\}$, for instance, then
$$%
\psfrag{x}{$\scriptstyle x$}
\psfrag{y}{$\scriptstyle y$}
\psfrag{z}{$\scriptstyle z$}
\begin{array}{cc}
  \begin{array}{c}
  I = I_{\rho,J} = \<x,y\>
  \subseteq \kk[x,y,z]
  \qquad\Implies\ \
  \end{array}
&
  G_I\ \ = \begin{array}{c}\includegraphics{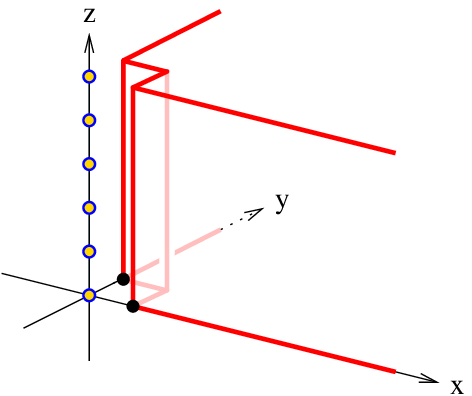}\end{array}
\end{array}
$$%
In this case $\NN^n \minus \NN^J$ consists of the monomials off of the
vertical axis (i.e., those in the region
outlined by bold straight lines), whereas every component $(u + L)
\cap \NN^J$ of~$G_I$ is simply a single lattice point on the vertical
axis.
\end{example}

\subsection{Binomial primary ideals with monomial associated primes}\label{s:binomPrim}

The algebraic characterization of monomial primary ideals in the first
half of Proposition~\ref{p:monomPrimary} has an approximate analogue
for primary binomial ideals, although it requires hypotheses on the
base field~$\kk$.

\begin{thm}\label{t:algbinom}
Fix\/ $\kk$ algebraically closed of characteristic~$0$.  If $I
\subseteq \kk[\xx]$ is an $I_{\rho,J}$-primary binomial ideal, then $I
= I_\rho + B$ for some binomial ideal~$B \supseteq \mm_J^\ell$
with~$\ell > 0$.
\end{thm}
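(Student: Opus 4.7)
The strategy is to take $B := I$: then $I = I_\rho + B$ reduces to $I_\rho \subseteq I$, and $B \supseteq \mm_J^\ell$ reduces to $\mm_J^\ell \subseteq I$. The second containment is immediate, since each $x_i$ with $i \notin J$ lies in $\sqrt I = I_{\rho,J}$ and hence has a power in~$I$; summing these exponents produces an~$\ell$ that works. So everything comes down to showing $I_\rho \subseteq I$.

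For that, normalize any generator of $I_\rho$, after dividing out the greatest common monomial factor, to $f = \xx^\uu - c\xx^\vv$ with $\uu,\vv \in \NN^J$ of disjoint support and $c = \rho(\uu-\vv) \in \kk^*$. Work in the $\NN^n/\til_I$-graded ring $R = \kk[\xx]/I$ (Lemma~\ref{l:hilb}). Because $I_{\rho,J}$ is the unique associated prime of $R$ and contains no $x_j$ for $j \in J$, each monomial $\xx^w$ with $w \in \NN^J$ is a non-zerodivisor on $R$ with nonzero image there (via $R \onto \kk[\xx]/I_{\rho,J} \isom \kk[\NN A]$). More generally, I claim that whenever $u,v \in \NN^J$ satisfy $u \til_I v$, one has $\xx^u = \rho(u-v)\xx^v$ in $R$: indeed, Lemma~\ref{l:hilb} gives $\xx^u = \beta\xx^v$ in $R$ for some $\beta \in \kk^*$, the nilpotence in $R$ of the binomial $\xx^u - \rho(u-v)\xx^v \in \sqrt I$ yields $(\beta - \rho(u-v))^r\xx^{rv} = 0$, and the non-zerodivisor property of $\xx^{rv}$ together with $\kk \into R$ force $\beta = \rho(u-v)$. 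Applied to $(\uu,\vv)$, this closes the proof provided one knows $\uu \til_I \vv$.

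The main task is therefore to prove: every $u,v \in \NN^J$ with $u-v \in L$ satisfy $u \til_I v$. Let $L' \subseteq L$ be the sublattice of differences of $\til_I$-equivalent pairs in $\NN^J$ (well defined as a subgroup of $L$ by additive closure of $\til_I$ and the translation invariance supplied by the non-zerodivisors $x_j$, $j \in J$); the task is to show $L' = L$. Given $w \in L$, pick $\vv' \in \NN^J$ with $\vv'+w \in \NN^J$, set $g = \xx^{\vv'+w} - \rho(w)\xx^{\vv'} \in \sqrt I$, take $g^r \in I$, and analyze the expansion $g^r = \sum_k \binom{r}{k}(-\rho(w))^{r-k}\xx^{\vv'+kw}$ in $R$ by its $\til_I$-graded pieces. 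If the order $t$ of $w$ in $L/L'$ were infinite, the monomials $\xx^{\vv'+kw}$ would lie in pairwise distinct $\til_I$-classes, so each single-term coefficient $\binom{r}{k}(-\rho(w))^{r-k}$ would have to vanish in $\kk$---impossible in characteristic~$0$. Hence $L/L'$ is torsion, and thus finite as $L$ is finitely generated. If $2 \leq t < \infty$, grouping terms by residue modulo~$t$ and invoking the identification $\xx^{\vv'+kw} = \rho((k-j)w)\xx^{\vv'+jw}$ from the previous paragraph (valid when $k \equiv j \pmod{t}$, since then $(k-j)w \in L'$) reduces vanishing in each graded piece to the identity $\sum_{k \equiv j\,(\mathrm{mod}\,t)}\binom{r}{k}(-1)^{r-k} = 0$ for every~$j$. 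A discrete Fourier transform by a primitive $t$-th root of unity $\zeta \in \kk$---available because $\kk$ is algebraically closed of characteristic~$0$---rewrites this system as $(1-\zeta^s)^r = 0$ for every $s = 0,\ldots,t-1$, contradicting $(1-\zeta^s)^r \neq 0$ for $s \neq 0$ when $t \geq 2$. Therefore $t = 1$, giving $w \in L'$, and so $L' = L$.

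The main obstacle is precisely the character-sum step at the end: both hypotheses on $\kk$ enter essentially there, algebraic closure to supply the root of unity~$\zeta$ and characteristic zero to keep the binomial coefficients and Fourier sums from vanishing. Everything else is formal manipulation of the $\til_I$-grading combined with the standard facts that $\sqrt I = I_{\rho,J}$ controls nilpotence and that $x_j$ for $j \in J$ acts as a non-zerodivisor on~$R$.
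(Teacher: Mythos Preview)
Your argument is correct and genuinely different from the paper's. The paper dispatches this theorem in one line by invoking \cite[Theorem~7.1]{binomialIdeals}, the Eisenbud--Sturmfels structure theorem for binomial primary decomposition, applied to an ideal that is already primary. You instead give a direct, self-contained proof that $I_\rho \subseteq I$: working in the $\NN^n/\til_I$-graded quotient $R = \kk[\xx]/I$, you define the sublattice $L' \subseteq L$ of realised differences and use a roots-of-unity argument to force $L' = L$. This has the virtue of isolating precisely where each hypothesis enters---characteristic~$0$ makes the binomial coefficients $\binom{r}{k}$ nonzero, and algebraic closure supplies the primitive $t^{\text{th}}$ root of unity $\zeta$ needed for the Fourier step---whereas the citation-based proof hides those dependencies inside the Eisenbud--Sturmfels machinery. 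The trade-off is length: your approach is longer but more transparent for a reader who has not internalised~\cite{binomialIdeals}.

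One small correction: in the expansion of $g^r$ the exponents are $r\vv' + kw$, not $\vv' + kw$ as you wrote. This matters, because with your choice of $\vv'$ (only $\vv'$ and $\vv'+w$ assumed to lie in $\NN^J$) the vector $\vv' + kw$ need not be in $\NN^J$ for $k \geq 2$, whereas $r\vv' + kw = k(\vv'+w) + (r-k)\vv'$ always is. Replacing $\vv'$ by $r\vv'$ throughout the latter half of the argument fixes this; nothing else changes.
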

\begin{proof}
This is the characterization of primary decomposition
\cite[Theorem~7.1]{binomialIdeals} applied to a binomial ideal that is
already primary.
\end{proof}

The content of the theorem is that $I$ contains both $I_\rho$ and a
power of each variable $x_i$ for $i \notin J$.  (In positive
characteristic, $I$ contains a Frobenius power of~$I_\rho$, but not
necessarily $I_\rho$ itself.)  A more precise analogue of the
algebraic part of Proposition~\ref{p:monomPrimary}
would characterize which binomial ideals~$B$ result in primary ideals.
However, there is no simple way to describe such binomial ideals~$B$
in terms of generators.
The best that can be hoped for is an answer to a pair of questions:
\begin{enumerate}
\item%
What analogue of standard monomials allows us to ascertain when $I$ is
primary?
\item%
What property of the standard monomials characterizes the primary
condition?
\end{enumerate}
Preferably the answers should be geometric, and suitable for binomial
ideals in arbitrary affine semigroup rings, as in the second half of
Proposition~\ref{p:monomPrimary}.

Lemma~\ref{l:hilb} answers the first question.  Indeed, if $I
\subseteq \kk[Q]$ is a monomial ideal, then $\std(I)$ is exactly the
subset of~$Q$ such that $\kk[Q]/I$ has $Q$-graded Hilbert function
$\dim_\kk\,(\kk[Q]/I)_u = 1$ for $u \in \std(I)$ and $0$ for $\ttt^u
\in I$.  In the monomial case, we could still define the monoid
quotient $\oQ = Q/\til_I$, whose classes are all singleton monomials
except for the class of monomials in~$I$.  Therefore, for general
binomial ideals~$I$, the set of non-monomial classes of the
congruence~$\til_I$ plays the role~of~$\std(I)$.

Note that Proposition~\ref{p:monomPrimary} answers the second question
for monomial ideals in affine semigroup rings, whose associated primes
are automatically monomial.  The next step relaxes the condition
on~$I$ but not on the associated prime:
consider a binomial ideal $I$ in an affine semigroup ring~$\kk[Q]$,
and ask when it is $\pp_F$-primary for a
face $F \subseteq Q$.  The answer in this case relies, as promised, on
the combinatorics of the graph~$G_I$ from Definition~\ref{d:graph} and
its set $\pi_0 G_I$ of connected components; however, as
Proposition~\ref{p:monomPrimary} hints, the group generated by~$F$
enters in an essential way.

\begin{defn}\label{d:localize}
For a face $F$ of an affine semigroup~$Q$, and any
$\kk[Q]$-module~$M$,
$$%
  M[\ZZ F] = M \otimes_{\kk[Q]} \kk[Q + \ZZ F]
$$
is the localization
by inverting all monomials not in~$\pp_F$.  If $I \subseteq \kk[Q]$ is
a binomial ideal, then a connected component $C \in \pi_0 G_I$ is
\emph{$F$-finite} if $C = C' \cap Q$ for some finite connected
component~$C'$ of the graph~$G_{I[\ZZ F]}$ for the localization $I[\ZZ
F] \subseteq \kk[Q][\ZZ F]$.
\end{defn}

Thus, for example, $\kk[Q][\ZZ F] = \kk[Q + \ZZ F]$ is the monoid
algebra for the affine semigroup $Q + \ZZ F$ obtained by inverting the
elements of~$F$ in~$Q$.  In Proposition~\ref{p:monomPrimary}, where $I
\subseteq \kk[Q]$ is a monomial ideal, all of the connected components
of~$G_I$ inside of~$\std(I)$ are singletons, and the same is true of
$G_{I[\ZZ F]}$.  See Section~\ref{s:minMonom} for additional
information and examples concerning the geometry and combinatorics of
localization.

\begin{example}\label{e:finite}
Connected components of~$G_I$ can be finite but not $F$-finite for a
given face~$F$ of~$Q$.  For instance, if $I = \<x - y\> \subseteq
\kk[x,y] = \kk[\NN^2]$, then the connected components of~$G_I$ are all
finite---they correspond to the sets of monomials in~$\kk[x,y]$ of
fixed total degree---but not $F$-finite if $F$ is the horizontal axis
of $Q = \NN^2$: once $x$ is inverted, $\pi_0 G_{I[x^{-1}]}$ consists
of infinite northwest-pointing rays in the upper half-plane.
\end{example}

\begin{thm}\label{t:monPrimary}
Fix a monomial prime ideal $\pp_F = \<\ttt^u \mid u \notin F\>$ in an
affine semigroup ring~$\kk[Q]$ for a face $F \subseteq Q$.  A binomial
ideal $I \subseteq \kk[Q]$ is $\pp_F$-primary if and only if
\begin{enumerate}
\item%
Every connected component of\/~$G_I$ other than $\{u \in Q \mid \ttt^u
\in I\}$ is $F$-finite.
\item%
$F$ acts on the set of $F$-finite components semifreely with finitely
many orbits.
\end{enumerate}
\end{thm}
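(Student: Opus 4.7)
The strategy is to exploit the fine $\oQ$-grading on $R = \kk[Q]/I$ from Lemma~\ref{l:hilb}, under which nonzero graded pieces are $1$-dimensional and indexed by the nonzero components of $G_I$---those $C \in \pi_0 G_I$ other than the ``zero class'' $\{u : \ttt^u \in I\}$. Being $\pp_F$-primary is equivalent to: (a) every $\ttt^u$ with $u \notin F$ is nilpotent on~$R$, forcing $\pp_F \subseteq \sqrt I$; and (b) every $\ttt^v$ with $v \in F$ is a nonzerodivisor on~$R$, forcing $\sqrt I \subseteq \pp_F$ together with primarity.

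Condition~(b) I would analyze first. Multiplication by $\ttt^v$ for $v \in F$ is a map of degree $\ol v \in \oQ$, carrying the (at most $1$-dimensional) piece $R_{\ol u}$ to $R_{\ol{u+v}}$. Injectivity is equivalent to $\ol{u+v}$ being a nonzero class whenever $\ol u$ is---precisely, that $F$ acts on the nonzero components of $G_I$ via $v \cdot C$ equal to the unique component containing $C+v$, and by isomorphisms on the associated $1$-dimensional pieces. Extending by inverses yields a $\ZZ F$-action, which is exactly what is realized upon passing to $R[\ZZ F] = \kk[Q+\ZZ F]/I[\ZZ F]$: the nonzero components of $G_{I[\ZZ F]}$ are the $\ZZ F$-orbits, and $F$-finiteness of a component $C \in \pi_0 G_I$ says, by definition, that its orbit becomes finite after this localization.

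For the forward direction, condition~(a) says that iterating multiplication by $\ttt^u$ for $u \notin F$ eventually annihilates every graded piece of~$R$. The localization $R[\ZZ F]$ is again noetherian and primary over an associated prime containing no monomials, so its $(Q+\ZZ F)/\til_{I[\ZZ F]}$-graded decomposition has only finitely many nonzero pieces, all lying in finite $\ZZ F$-orbits. This simultaneously gives~(1)---every nonzero component of $G_I$ becomes $F$-finite---and the finite-orbits half of~(2), while semifreeness of the $F$-action is forced because $\ttt^v$ for $v \in F$ acts invertibly on each $1$-dimensional localized piece and hence the stabilizer of an orbit in~$\ZZ F$ is a subgroup acting by a well-defined character.

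The reverse direction is the main obstacle. Given~(1) and~(2), I would assemble the nonzero components of~$G_I$ into their finitely many $F$-orbits to build an $\oQ$-graded filtration of~$R$ whose successive quotients are each isomorphic (up to a degree shift) to $\kk[F]/\mathfrak{a}$ for some $\pp_F$-primary ideal $\mathfrak{a}$ of~$\kk[F]$, thereby exhibiting $\pp_F$ as the unique associated prime of~$R$. The delicate technical point---where most of the work lies---is that semifreeness must be strong enough to guarantee each orbit quotient actually has associated prime equal to~$\pp_F$ rather than a strict subprime, and that $F$-finiteness of every nonzero component really forces $\ttt^u$ for $u \notin F$ to act nilpotently across the \emph{entire} filtration, not merely on each piece separately. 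Controlling this interaction between the face structure of $F \subseteq Q$ and the monoid congruence~$\til_I$ away from~$F$ is the crux of the argument.
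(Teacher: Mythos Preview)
Your skeleton matches the paper's approach---fine grading, filtration by $F$-orbits, localization---but several steps are misstated or genuinely incomplete.

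\textbf{The forward direction (primary $\Rightarrow$ conditions) has a gap.} The assertion that $R[\ZZ F]$ is ``primary over an associated prime containing no monomials'' is false: $\pp_F[\ZZ F]$ is still generated by monomials $\ttt^u$ with $u \notin \ZZ F$. More seriously, the claim that $R[\ZZ F]$ has ``only finitely many nonzero pieces'' does not follow: $R[\ZZ F]$ is a finitely generated $\kk[\ZZ F]$-module, hence infinite-dimensional, with infinitely many nonzero graded pieces. What you need is that every \emph{component} of $G_{I[\ZZ F]}$ other than the monomial class is finite, and that there are finitely many $\ZZ F$-orbits of such components. Inverting only monomials does not give this directly. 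The paper's argument passes to the full localization at~$\pp_F$ (inverting all polynomials outside~$\pp_F$, not just monomials) and shows that this annihilates the graded piece of any infinite component; since $I$ is $\pp_F$-primary, localization at~$\pp_F$ is injective on~$R$, so those pieces were zero to begin with. That step---\cite[Lemmas~2.9 and~2.10]{primDecomp}---is where the real work lies, and your proposal skips it.

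\textbf{The reverse direction has a confusion about the filtration quotients.} You describe the successive quotients as $\kk[F]/\mathfrak{a}$ for ``some $\pp_F$-primary ideal~$\mathfrak{a}$ of~$\kk[F]$''. But $\pp_F \cap \kk[F] = 0$, so this forces $\mathfrak{a} = 0$. The actual quotients are the vector spaces $\kk\{T\}$ with basis an $F$-orbit~$T$; these are torsion-free rank-one $\kk[F]$-modules, but not free in general. Once this is set up correctly, the ``delicate technical point'' you flag is not delicate: $\pp_F$ annihilates $\kk\{T\}$ because multiplication by~$\ttt^u$ for $u \notin F$ sends~$T$ to a later stage of the filtration, and semifreeness makes each~$\ttt^f$ for $f \in F$ injective on~$\kk\{T\}$; together these force $\mathrm{Ass}(\kk\{T\}) = \{\pp_F\}$.

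\textbf{A definitional slip.} $F$-finiteness of a component~$C$ means the single component of $G_{I[\ZZ F]}$ containing~$C$ is finite, not that ``its orbit becomes finite after this localization''. The $\ZZ F$-orbit of that component is typically infinite (isomorphic to~$\ZZ F$ itself, by semifreeness); it is each individual component in the orbit that is finite.
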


Thus the primary condition is fundamentally a finiteness
condition---or really a pair of finiteness conditions.  A proof is
sketched after Example~\ref{e:binomPrimary};
but first, the terminology requires precise explanations.
Semifreeness, for example,
guarantees that the set of $F$-finite components is a subset of a set
acted on freely by $\ZZ F$;
this is part of the characterization of semifree actions in
\cite{mesoprimary}.

\begin{defn}
An \emph{action} of a monoid~$F$ on a set~$T$ is a map $F \times T \to
T$, written $(f,t) \mapsto f + t$, that satisfies $0 + t = t$ for all
$t \in T$ and respects addition: $(f + g)+ t = f + (g + t)$.  The
monoid action is \emph{semifree} if $t \mapsto f + t$ is an injection
$T \into T$ for each $f \in F$, and $f \mapsto f + t$ is an injection
$F \into T$ for each $t \in T$.
\end{defn}

In contrast to group actions, monoid actions do not a~priori define
equivalence relations, because the relation $t \sim f + t$ can fail to
be symmetric.
The relation is already reflexive
and transitive, however, precisely by the two axioms for
monoid~actions.

\begin{defn}\label{d:orbit}
An \emph{orbit} of a monoid action of~$F$ on~$T$ is an equivalence
class under the symmetrization of the relation $\{(s,t) \mid f + s
= t$ for some $f \in F\} \subseteq T \times T$.
\end{defn}

Combinatorially, if $F$ acts on~$T$, one can construct a directed
graph
with vertex set $T$ and an edge from $s$ to~$t$ if $t = f + s$ for
some $f \in F$.
Then an orbit is a connected component of the underlying undirected
graph.

\begin{example}\label{e:binomPrimary}
The ideal
$$%
\psfrag{x}{$\scriptstyle x$}
\psfrag{y}{$\scriptstyle y$}
\psfrag{z}{$\scriptstyle z$}
\begin{array}{cc}
\\[-3ex]
  \begin{array}{c}
  I = \<{\color{darkpurple}x-y},x^2\>
  \subseteq \kk[x,y,z]
  \qquad\Implies\ \
  \end{array}
&
  G_I\ \ = \begin{array}{c}\includegraphics{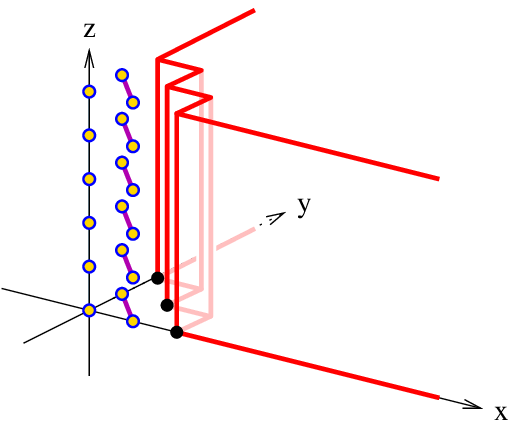}\end{array}
\end{array}
$$%
is $\pp$-primary for $\pp = \pp_F = \<x,y\>$, where $F$ is the
$z$-axis of~$\NN^3$.  The monoid~$F$ acts on the $F$-finite connected
components of~$G_I$ with two orbits: one on the $z$-axis, where each
connected component is a singleton; and one adjacent orbit, where
every connected component is a pair.  The monomial class in this
example (outlined by bold straight lines) is the set of monomials
in~$\<x^2,xy,y^2\>$.
\end{example}

\begin{example}
Fix notation as in Examples~\ref{e:face} and~\ref{e:primary}.  The
ideal $I = \<c^2,cd,d^2\> \subseteq \kk[Q]$ is $\pp_F$-primary.  In
contrast to Example~\ref{e:binomPrimary}, this time the $F$-finite
connected components are all singletons, but the two orbits are not
isomorphic as sets acted on by the face~$F$: one orbit is $F$ itself,
while the other is the set depicted in Example~\ref{e:primary}.
\end{example}

\begin{proof}[Proof sketch for Theorem~\ref{t:monPrimary}]
This theorem is the core conclusion of \cite[Theorem~2.15 and
Proposition~2.13]{primDecomp}.
The argument is summarized as follows.

For any set~$T$, let $\kk\{T\}$ denote the vector space over~$\kk$
with basis~$T$.  If $\pi_0 G_I$ satisfies the two conditions, then
$\kk[Q]/I$ has finite a filtration, as a $\kk[Q]$-module, whose
associated graded pieces are the vector spaces $\kk\{T\}$ for the
finitely many $F$-orbits~$T$ of $F$-finite components of~$G_I$.  In
fact,
semifreeness guarantees that for each orbit~$T$, the vector space
$\kk\{T\}$ is naturally a torsion-free module over $\kk[F] =
\kk[Q]/\pp_F$.  Finiteness of the number of orbits guarantees that the
associated graded module of~$\kk[Q]/I$ is a finite direct sum of
modules~$\kk\{T\}$, so it has only one associated prime,
namely~$\pp_F$.  Consequently $\kk[Q]/I$ itself has just one
associated~prime.

For the other direction, when $I$ is $\pp_F$-primary, one proves that
inverting the monomials and binomials outside of~$\pp_F$ annihilates
the $\oQ$-graded pieces of $\kk[Q]/I$ for which the connected
component in~$G_{I[\ZZ F]}$ is infinite \cite[Lemmas~2.9
and~2.10]{primDecomp}.  Since the elements outside of~$\pp_F$ act
injectively on $\kk[Q]/I$ by definition of $\pp_F$-primary,
\mbox{every} class of~$\til_I$ that is not $F$-finite must therefore
already consist of monomials in~$I$.  The semifree action of~$F$ on
the $F$-finite components derives simply from the fact that $\kk[Q]/I$
is torsion-free as a $\kk[F]$-module, where the $\kk[F]$-action is
induced by the inclusion $\kk[F] \subseteq \kk[Q]$.  Generalities
about $\oQ$-gradings of this sort imply that $\kk[Q]/I$ possesses a
filtration whose associated graded pieces are as in the previous
paragraph.  The minimality of~$\pp_F$ over~$I$ implies that the length
of the filtration is finite.
\end{proof}

\section{Binomial primary decomposition}\label{s:decomp}

General binomial ideals induce more complicated congruences than
primary binomial ideals.  This section completes the combinatorial
analysis of binomial primary
decomposition by describing how to pass from an arbitrary binomial
ideal to its primary components.
There are crucial points where characteristic zero or algebraically
closed hypotheses are required of the field~$\kk$, but those will be
mentioned explicitly; if no mention is made, then $\kk$ is assumed to
be arbitrary.

\subsection{Monomial primes minimal over binomial ideals}\label{s:minMonom}

The first step is to consider again the setting from the previous
section, particularly Theorem~\ref{t:monPrimary}, where a monomial
prime ideal~$\pp_F$ is associated to~$I$ in an arbitrary affine
semigroup ring~$\kk[Q]$, except that now the binomial ideal~$I$ is not
assumed to be primary.  The point is to construct its $\pp_F$-primary
component.  The nature of Theorem~\ref{t:monPrimary} splits the
construction into two parts:
\begin{enumerate}
\item%
ensuring that the (non-monomial) connected components are all
$F$-finite, and
\item%
forcing the face $F$ to act in the correct manner on the components.
\end{enumerate}
These operations will be carried out in reverse order, with part~2
being accomplished by localization, and then part~1 being accomplished
by simply lumping all of the
connected components that are not $F$-finite together.

\begin{defn}
For a face $F$ of an affine semigroup~$Q$ and a binomial ideal $I
\subseteq\nolinebreak \kk[Q]$,
$$%
  (I:\ttt^F)
  = I[\ZZ F] \cap \kk[Q]
$$
is the kernel of the composite map $\kk[Q] \to \kk[Q]/I \to
(\kk[Q]/I)[\ZZ F]$.
\end{defn}

\begin{remark}
The notation $(I:\ttt^F)$ is explained by an equivalent construction
of this ideal.  Indeed, the usual meaning of the colon operation for
an element $y \in \kk[Q]$ is that $(I:y) = \{z \in \kk[Q] \mid yz \in
I\}$.  Here, $(I:\ttt^F) = (I:\ttt^f)$ for any lattice point $f$ lying
sufficiently far in the relative interior of~$F$.  Equivalently,
$(I:\ttt^F) = (I:(\ttt^f)^\infty) = \bigcup_{r \in \NN} (I:\ttt^{rf})$
for any lattice point $f \in F$ that does not lie on a proper~subface
of~$F$.
\end{remark}

Combinatorially, the passage from $I$ to $(I:\ttt^F)$ has a concrete
effect.

\begin{lemma}\label{l:colon}
The connected components of the graph $G_{(I:\ttt^F)}$ defined
by~$(I:\ttt^F)$ are obtained from $\pi_0 G_I$ by joining together all
pairs of components $C_u$ and~$C_v$ such that $C_{u+f} = C_{v+f}$ for
some $f \in F$, where $C_u$ is the connected component containing $u
\in Q$.
\end{lemma}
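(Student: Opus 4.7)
The plan is to unwind the definition $(I:\ttt^F) = I[\ZZ F] \cap \kk[Q]$ at the level of binomials, read off which binomials $\ttt^u - \lambda\ttt^v$ lie in $(I:\ttt^F)$, and then translate the result into the described coarsening of $\pi_0 G_I$. By the remark preceding the lemma, $(I:\ttt^F) = (I:\ttt^f) = \bigcup_r (I:\ttt^{rf})$ for any $f$ in the relative interior of $F$, so the computation reduces to the elementary colon operation by a single monomial. Since colon by a monomial preserves the binomial property of an ideal in an affine semigroup ring (by the standard Eisenbud--Sturmfels argument, which carries over because $\kk[Q]$ is a domain), the graph $G_{(I:\ttt^F)}$ is governed in the usual way by binomial generators of $(I:\ttt^F)$.

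First I would establish the key equivalence: for $u,v \in Q$ and $\lambda \in \kk^*$,
$$
  \ttt^u - \lambda\ttt^v \in (I:\ttt^F)
  \iff
  \ttt^{u+f} - \lambda\ttt^{v+f} \in I \text{ for some } f \in F.
$$
The implication ``$\Leftarrow$'' is immediate from the definition of the colon. For ``$\Rightarrow$'', by the remark there exists $f \in F$ with $\ttt^f(\ttt^u - \lambda\ttt^v) = \ttt^{u+f} - \lambda\ttt^{v+f} \in I$, which is exactly the required statement (here cancellativity of $Q$ ensures $\ttt^{u+f}$ and $\ttt^{v+f}$ remain distinct monomials). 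By Lemma~\ref{l:hilb}, $\ttt^{u+f} - \lambda\ttt^{v+f} \in I$ with $\lambda \neq 0$ is equivalent to $u+f \sim_I v+f$, i.e., to $C_{u+f} = C_{v+f}$ in $\pi_0 G_I$.

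Thus the edges of $G_{(I:\ttt^F)}$ are precisely the pairs $(u,v)$ such that $C_{u+f} = C_{v+f}$ for some $f \in F$. Since $I \subseteq (I:\ttt^F)$ we have $G_I \subseteq G_{(I:\ttt^F)}$, so every component of $G_{(I:\ttt^F)}$ is a union of components of $G_I$. The components of $G_{(I:\ttt^F)}$ are now obtained by taking the transitive closure: start from $\pi_0 G_I$ and glue $C_u$ to $C_v$ whenever there exists $f \in F$ witnessing $C_{u+f} = C_{v+f}$, which is exactly the operation described in the lemma.

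I expect the main obstacle to be the bookkeeping around the scalar $\lambda$ and the assertion that $(I:\ttt^F)$ is binomial; once those are in hand, everything else is formal. In particular, one must verify that the displayed equivalence is really biconditional with the \emph{same} scalar $\lambda$ on each side, which uses that $\ttt^f$ is a non-zero-divisor on $\kk[Q]$ (true because $Q$ is cancellative and $\kk[Q]$ is a domain), so no new $\ttt^u$--$\ttt^v$ relations are created beyond those recorded by the edges of $G_I$ shifted by $f$.
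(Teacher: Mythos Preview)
Your argument is correct and follows essentially the same route as the paper's proof, which is the one-line observation that $u$ and $v$ lie in the same component of $G_{(I:\ttt^F)}$ precisely when $\ttt^f(\ttt^u - \lambda\ttt^v) \in I$ for some $f \in F$ and nonzero $\lambda$. Your discussion of transitive closure is harmless but superfluous: the relation ``$C_{u+f} = C_{v+f}$ for some $f \in F$'' is already an equivalence relation on $Q$ (for transitivity, add the two witnessing elements of $F$), so the joining procedure terminates immediately.
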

\begin{proof}
Two lattice points $u,v \in Q$ lie in the same component
of~$G_{(I:\ttt^F)}$ precisely when there is a binomial $\ttt^u -
\lambda\ttt^v \in \kk[Q]$ such that $\ttt^f(\ttt^u - \lambda\ttt^v)
\in I$ for some $f \in F$.
\end{proof}

Roughly speaking: join the components if they become joined after
moving them up by an element $f \in F$.  Illustrations of lattice
point phenomena related to binomial primary decomposition become
increasingly difficult to draw in two dimensions as the full nature of
the theory develops, but a small example is possible at this stage.

\begin{example}\label{e:localize}
The ideal $I = \<xz-yz,x^2-x^3\> \subseteq \kk[x,y,z]$ yields the same
graph as Example~\ref{e:monomPrimary'} except for two important
differences:
\begin{itemize}
\item%
here there are many fewer edges in the $xy$-plane ($z = 0$); and
\item%
every horizontal ($z = \text{constant} \geq 1$) slice of the big
region is a separate connected component, in contrast to
Example~\ref{e:monomPrimary'}, where the entire big region
was a single connected component corresponding to the monomials in
the~ideal.
\end{itemize}
$$%
\psfrag{x}{$\scriptstyle x$}
\psfrag{y}{$\scriptstyle y$}
\psfrag{z}{$\scriptstyle z$}
\begin{array}{@{}c@{\ }c@{}}
\\[-3ex]
  \begin{array}{@{}c}
  I = \<{\color{darkpurple}xz-yz},{\color{darkgreen}x^2-x^3}\>
  \subseteq \kk[x,y,z]
  \quad\Implies\ 
  \end{array}
&
  G_I\ \ \approx \begin{array}{@{\!}c@{}}\includegraphics{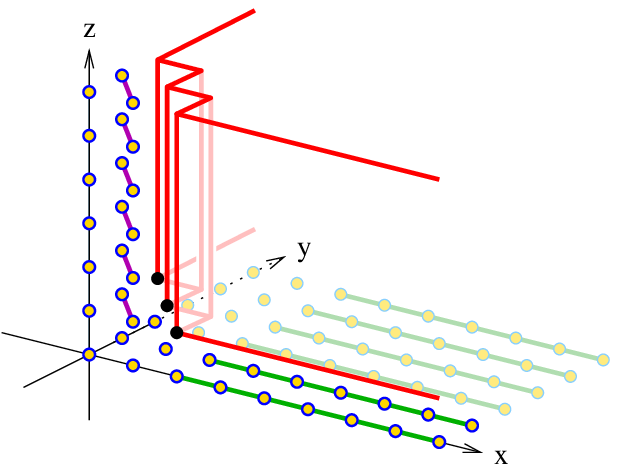}\end{array}
\\[-1ex]
\end{array}
$$%
Only the second generator, ${\color{darkgreen}x^2-x^3}$, is capable of
joining pairs of points in the $xy$-plane, and it does so parallel to
the $x$-axis, starting at $x = 2$.  Of course,
${\color{darkgreen}x^2-x^3}$ also joins pairs of points in the same
manner at positive heights $z \geq 1$, but again only starting at $x =
2$.  The first generator, ${\color{darkpurple}xz-yz}$, has the same
effect as ${\color{darkpurple}x-y}$ did in
Example~\ref{e:monomPrimary'}, except that ${\color{darkpurple}xz-yz}$
only joins pairs of lattice points at height $z = 1$ or more.  In
summary, every connected component of~$G_I$ in this example is
contained in a single horizontal slice, and the horizontal slices
of~$G_I$ are
$$%
\psfrag{x}{$\scriptstyle x$}
\psfrag{y}{$\scriptstyle y$}
\psfrag{z=0}{$z = 0$}
\psfrag{z>0}{$z \geq 1.$}
\psfrag{and}{and}
\begin{array}{c}\\[-3.5ex]\includegraphics{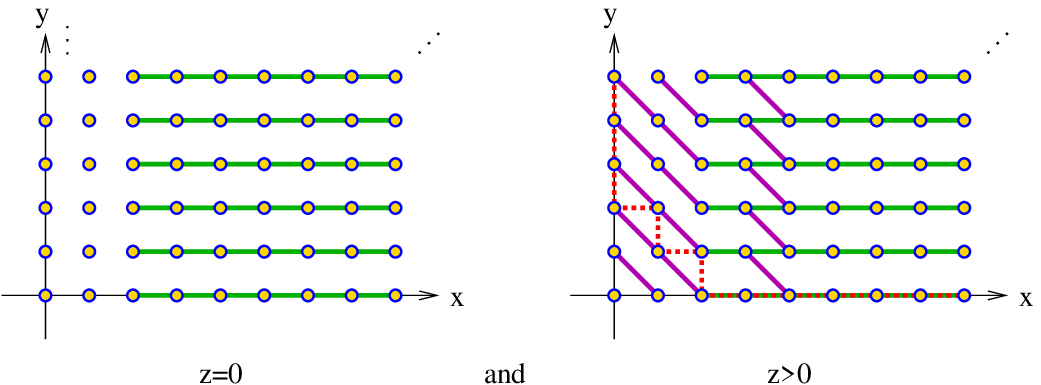}\\[-.5ex]\end{array}
$$
The outline of the big region
is drawn as a dotted line in the $z \geq 1$ slice illustration, which
depicts only enough of the edges to elucidate its three connected
components.

Let $F$ be the part of~$\NN^3$ in the $xy$-plane, so $\pp_F = \<z\>$.
The ideal $(I:\ttt^F) = (I:z^\infty) = (I:z) = \<x-y,x^2-x^3\>$ again
has the property that every connected component of~$G_{(I:z)}$ is
contained in a single horizontal slice, but now all of these slices
look like the $z \geq 1$ slices of~$G_I$.  Compare this to the
statement of Lemma~\ref{l:colon}.
\end{example}

\begin{thm}\label{t:monAss}
Fix a monomial prime $\pp_F = \<\ttt^u \mid u \notin F\>$ in an affine
semigroup ring $\kk[Q]$ for a face $F \subseteq Q$.  If\/~$\pp_F$ is
minimal over a binomial ideal $I \subseteq \kk[Q]$ and $\ol\pi_0
G_{I[\ZZ F]}$~is the set of finite
components of the graph $G_{I[\ZZ F]}$, then the
$\pp_F$-primary~component~of~$I$~is
$$%
  (I:\ttt^F) + \<\ttt^u \mid C_u \notin \ol\pi_0 G_{I[\ZZ F]}\>.
$$
The exponents on the monomials in this primary component are precisely
the elements of~$Q$ that lie in infinite connected components of the
graph~$G_{I[\ZZ F]}$.
\end{thm}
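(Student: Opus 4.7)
The plan is to define $J = (I:\ttt^F) + M$ with $M = \<\ttt^u \mid C_u \notin \ol\pi_0 G_{I[\ZZ F]}\>$, and to prove two claims: (a) $J$ is $\pp_F$-primary; and (b) $J[\ZZ F] = I[\ZZ F]$. Minimality of $\pp_F$ over $I$ ensures that localization destroys every other primary component of $I$, so $I[\ZZ F]$ is $\pp_F[\ZZ F]$-primary and its contraction to $\kk[Q]$ is the unique $\pp_F$-primary component of~$I$. Thus (a) and (b) together identify $J$ as that component.

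For (a), I would apply Theorem~\ref{t:monPrimary} after describing $G_J$ combinatorially. By Lemma~\ref{l:colon}, the components of $G_{(I:\ttt^F)}$ are exactly the intersections $C' \cap Q$ for $C' \in \pi_0 G_{I[\ZZ F]}$; adjoining $M$ then merges the restrictions of the infinite~$C'$ into the monomial class, leaving precisely the $F$-finite components as the non-monomial components of $G_J$. This settles condition~(1) of Theorem~\ref{t:monPrimary}. For condition~(2), translation by $\ZZ F$ preserves finite components of $G_{I[\ZZ F]}$ and restricts to an $F$-action on their $Q$-parts, with finitely many orbits by noetherianity of $\kk[Q]/J$. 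Semifreeness amounts to torsion-freeness of $\kk[Q]/J$ as a $\kk[F]$-module: any failure of injectivity would yield a relation $\ttt^\alpha(1 - \lambda\ttt^f) \in I[\ZZ F]$ with $0 \neq f \in \ZZ F$, and since $1 - \lambda\ttt^f$ has nonzero constant term it lies outside $\pp_F[\ZZ F]$ and is therefore a nonzerodivisor modulo the $\pp_F[\ZZ F]$-primary ideal $I[\ZZ F]$, forcing $\ttt^\alpha \in I[\ZZ F]$ and collapsing the alleged $F$-finite component into the monomial class.

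Step (b) is where I expect the main obstacle. The containment $\supseteq$ is immediate from $I \subseteq J$. For the reverse, $(I:\ttt^F)$ extends back to $I[\ZZ F]$ by construction, so the crux is showing that every $\ttt^u$ with $C_u$ infinite already belongs to $I[\ZZ F]$. This is the substantive analytic step, and parallels the second half of the proof sketch of Theorem~\ref{t:monPrimary}: infiniteness of $C_u$ must be converted into a nontrivial $\ZZ F$-stabilizer for the corresponding $\til_{I[\ZZ F]}$-class, producing a binomial $\ttt^\alpha(1 - \lambda\ttt^f) \in I[\ZZ F]$ with $f \in \ZZ F$ nonzero; the same nonzerodivisor-plus-minimality argument from (a) then yields $\ttt^\alpha \in I[\ZZ F]$, and propagating within $C_u$ using the binomial generating relations gives $\ttt^u \in I[\ZZ F]$ for every $u \in C_u$. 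The most delicate point is extracting a stabilizer relation from infiniteness alone, which requires reducing to the quotient by the natural $\ZZ F$-action on $\pi_0 G_{I[\ZZ F]}$ and invoking noetherianity to force that some power $f \in \ZZ F$ stabilizes the class. Once (b) is established, the second statement of the theorem---that the monomials of $J$ are exactly those $u$ with $C_u$ infinite---follows from the construction, since $(I:\ttt^F)$ contributes no monomials beyond those already in~$I$ itself, whose classes correspond to the singleton components already accounted for on the right-hand side.
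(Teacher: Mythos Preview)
Your overall architecture---prove $J$ is $\pp_F$-primary, then identify $J$ with the primary component via localization---matches the paper's.  The genuine gap is that you conflate the monomial localization $[\ZZ F]$ (inverting only the monomials $\ttt^f$ for $f\in F$) with the full localization at~$\pp_F$ (inverting every element outside~$\pp_F$).  Minimality of~$\pp_F$ over~$I$ kills the other primary components only in the \emph{full} localization; the monomial localization $I[\ZZ F]$ need not be primary at all.  Example~\ref{e:finite'} already exhibits this: with $I=\<xz-yz\>=\<z\>\cap\<x-y\>$ and $\pp_F=\<z\>$, the component $\<x-y\>$ survives in $\kk[x^\pm,y^\pm,z]$ because $x-y$ is not a unit there.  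Consequently your claim~(b), that $J[\ZZ F]=I[\ZZ F]$, is simply false: $J=\<z\>$ while $I[\ZZ F]=\<z(x-y)\>\subsetneq\<z\>$.  Your nonzerodivisor argument breaks at exactly this point: $1-\lambda\ttt^f$ (here $x-y$, up to units) lies outside $\pp_F[\ZZ F]$, but since $I[\ZZ F]$ is not primary, that does not make it a nonzerodivisor modulo~$I[\ZZ F]$---and indeed $z(x-y)\in I[\ZZ F]$ with $z\notin I[\ZZ F]$.

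The repair is to run your step~(b) argument in the full localization $\kk[Q]_{\pp_F}$, which \emph{is} local with $I_{\pp_F}$ primary.  There $1-\lambda\ttt^f$ really is a unit, so the stabilizer relation forces $\ttt^u\in I_{\pp_F}$, hence $\ttt^u\in I_{\pp_F}\cap\kk[Q]=P$; combined with $(I:\ttt^F)\subseteq P$ this gives $J\subseteq P$, and then $I\subseteq J$ with $J$ $\pp_F$-primary forces $J=P$.  This is precisely the paper's route (citing \cite[Lemmas~2.9 and~2.10]{primDecomp} for the vanishing in the full localization).  Incidentally, your semifreeness argument in~(a) has the same defect, but there the fix is cheaper and purely combinatorial: if $f+u\sim g+u$ with $f\neq g$ in~$F$, then $u\sim u+n(f-g)$ for all $n\in\NN$, so $C_u$ is infinite in $G_{I[\ZZ F]}$, contradicting $F$-finiteness directly---no primariness of $I[\ZZ F]$ needed.
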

\begin{proof}
This is \cite[Theorem~2.15]{primDecomp}.  The $\pp_F$-primary
component of~$I$ is equal to the $\pp_F$-primary component of
$(I:\ttt^F)$ because primary decomposition is preserved by
localization (see
\cite[Proposition~4.9]{AM}, for example), so we may as well assume
that $I = (I:\ttt^F)$.
It is elementary to check that $F$ acts on the connected components.
The action is semifree on the $F$-finite components, for if $f + u
\sim g + u$ for some $f,g \in F$, then $u \sim n(f-g) + u$ for all $n
\in \NN$, whence $C_u \in \pi_0 G_{I[\ZZ F]}$ is infinite; and if $f +
u \sim f + v$ then $u \sim v$, because $\xx^f$ is a unit on $I[\ZZ
F]$.  It is also elementary, though nontrivial, to check that the
kernel of the usual localization homomorphism $\kk[Q]/I \to
\kk[Q]_{\pp_F}/I_{\pp_F}$---inverting all polynomials outside
of~$\pp_F$, not just monomials---contains every monomial $\ttt^u$ for
which $C_u \notin \ol\pi_0 G_{I[\ZZ F]}$ \cite[Lemmas~2.9
and~2.10]{primDecomp}.  Now note that $(I:\ttt^F) + \<\ttt^u \mid C_u
\notin \ol\pi_0 G_{I[\ZZ F]}\>$ is already
primary by~Theorem~\ref{t:monPrimary}.
\end{proof}

\begin{remark}
The graph of~$I[\ZZ F]$ has vertex set $Q[\ZZ F]$, which naturally
contains~$Q$.  That is why, in Definition~\ref{d:localize} and
Theorem~\ref{t:monAss}, it makes sense to say that an element of~$Q$
lies in a connected component of~$G_{I[\ZZ F]}$.
\end{remark}

\begin{example}
The $\pp_F$-primary component of the ideal $I$ in
Example~\ref{e:localize} is the ideal~$I$ in
Example~\ref{e:monomPrimary'}: the differences that remain, after the
localization operation in Example~\ref{e:localize} is complete, are
erased by lumping together the infinite connected components into a
single monomial component.
\end{example}

\begin{example}\label{e:finite'}
Starting with $(I:\ttt^F)$ in Theorem~\ref{t:monAss}, it is not enough
to throw in the monomials whose exponents lie in infinite components
of~$G_{(I:\ttt^F)}$; that is, a connected component
of~$G_{(I:\ttt^F)}$ could be finite but nonetheless equal to the
intersection with~$Q$ of an infinite component of~$G_{I[\ZZ F]}$.
This occurs for $I = \<xz - yz\> \subseteq \kk[x,y,z]$, with $F$ being
the $xy$-coordinate plane of $Q = \NN^3$, so $\pp_F = \<z\>$.  Every
connected component of~$G_I$ is finite, even though $I = (I:\ttt^F)$
is not primary.  When $x$ and~$y$ are inverted to form~$I[\ZZ F]$, the
components at height $z \geq 1$ become cosets of the line spanned by
$[\twoline {\phantom-1}{-1}]$, whose intersections with $\NN^3$ are
bounded.  Hence the $\<z\>$-primary component of~$I$ is $I + \<z\> =
\<z\>$, as is clear from the primary decomposition $I = \<z\> \cap
\<x-y\>$.
\end{example}

\subsection{Primary components for arbitrary given associated primes}\label{s:given}

For this subsection, fix a binomial ideal $I \subseteq \kk[\xx]$ in a
polynomial ring with a binomial associated prime $I_{\rho,J}$ for some
character $\rho: L \to \kk^*$ defined on a saturated sublattice $L
\subseteq \ZZ^J$.
Now it is important to assume that the field $\kk$ is algebraically
closed of characteristic~$0$, for these hypotheses are crucial to the
truth of Theorem~\ref{t:algbinom}, and that theorem is the tool that
reduces the current general situation to the special case in
Section~\ref{s:minMonom}.  The logic is as follows.

Every binomial $I_{\rho,J}$-primary ideal contains~$I_\rho$ by
Theorem~\ref{t:algbinom}.  Since
we are trying to construct a binomial $I_{\rho,J}$-primary component
of~$I$ starting from $I$ itself, the first step should therefore be to
enlarge~$I$ by throwing in~$I_\rho$.  Here is a formal statement.

\begin{prop}\label{p:polyringPrimary}
Fix a binomial ideal $I \subseteq \kk[\xx]$
with $\kk$ algebraically closed of characteristic~$0$.  If $P$ is any
binomial $I_{\rho,J}$-primary component of~$I$, then $P$ is the
preimage in $\kk[\xx]$ of a binomial $(I_{\rho,J}/I_\rho)$-primary
component of $(I + I_\rho)/I_\rho \subseteq \kk[\xx]/I_\rho$.
\end{prop}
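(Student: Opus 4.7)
The key input is Theorem~\ref{t:algbinom}: since $P$ is a binomial $I_{\rho,J}$-primary ideal and $\kk$ is algebraically closed of characteristic zero, $P$ automatically contains $I_\rho$. This containment is what makes the proposition essentially a bookkeeping exercise. Accordingly, the plan is to push $P$ down to the quotient $\ol R := \kk[\xx]/I_\rho$ as the ideal $\ol P := P/I_\rho$, and then verify that $\ol P$ is a binomial $(I_{\rho,J}/I_\rho)$-primary component of $\ol K := (I + I_\rho)/I_\rho$.

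First I would check the structural properties of $\ol P$. Binomiality is immediate, because binomial generators of $P$ project to binomial generators of $\ol P$. For the primary property, the standard correspondence between ideals of $\kk[\xx]$ containing $I_\rho$ and ideals of $\ol R$ preserves the primary condition and sends associated primes to associated primes; since $I_{\rho,J} \supseteq I_\rho$, this identifies $\ol P$ as $(I_{\rho,J}/I_\rho)$-primary in $\ol R$.

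The substantive step is to exhibit $\ol P$ as a component in some primary decomposition of $\ol K$. Starting from a primary decomposition $I = P \cap Q$ in which $Q$ is the intersection of the remaining primary components, I would invoke the modular identity
$$ (P \cap Q) + I_\rho = P \cap (Q + I_\rho), $$
which holds precisely because $I_\rho \subseteq P$. Quotienting by $I_\rho$ then yields $\ol K = \ol P \cap \ol{Q + I_\rho}$, and refining the second factor into primary ideals in $\ol R$ (possible since $\ol R$ is Noetherian) displays $\ol P$ as a component in a primary decomposition of $\ol K$.

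The only real obstacle, conceptually, is the containment $I_\rho \subseteq P$ that powers the entire strategy; this is far from transparent and relies essentially on the algebraic-closure and characteristic-zero hypotheses through Theorem~\ref{t:algbinom}. Once that containment is in hand, the modular law converts a primary decomposition of $I$ into one of $I + I_\rho$ in which the component containing $I_\rho$ survives intact, and the quotient correspondence handles binomiality and the primary condition without any further appeal to hypotheses on~$\kk$.
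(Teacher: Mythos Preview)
Your proof is correct and follows exactly the approach the paper intends: the paper does not give a detailed proof of this proposition, merely introducing it as ``a formal statement'' of the observation that every binomial $I_{\rho,J}$-primary ideal contains~$I_\rho$ by Theorem~\ref{t:algbinom}. Your modular-law computation and the ideal correspondence under the quotient by~$I_\rho$ correctly fill in the details, including the irredundancy check (since $Q \not\subseteq P$ and $I_\rho \subseteq P$ force $Q + I_\rho \not\subseteq P$).
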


An alternative phrasing makes the point of considering the quotient
$\kk[\xx]/I_\rho$ clearer.

\begin{prop}\label{p:polyringPrimary'}
If $P$ is an $I_{\rho,J}$-primary binomial ideal in~$\kk[\xx]$, with
$\kk$ algebraically closed of characteristic~$0$, then the image
of~$P$ in the affine semigroup ring $\kk[Q] = \kk[\xx]/I_\rho$ is a
binomial ideal $\pp_F$-primary to the monomial prime $\pp_F =
I_{\rho,J}/I_\rho$ in~$\kk[Q]$.
\end{prop}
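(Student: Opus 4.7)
The plan is to reduce the statement to an elementary check in the quotient ring $\kk[\xx]/I_\rho$, using Theorem~\ref{t:algbinom} to guarantee that this quotient captures all of~$P$, and then to invoke the general fact that both binomiality and primary-ness are preserved under quotients by contained ideals. The only real work is to identify $\kk[\xx]/I_\rho$ with an affine semigroup ring $\kk[Q]$ in a way that sends $I_{\rho,J}/I_\rho$ to a monomial prime $\pp_F$; everything else is essentially formal.

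Concretely, I would begin by invoking Theorem~\ref{t:algbinom}, which forces $P = I_\rho + B$ for a binomial ideal~$B$, and in particular $I_\rho \subseteq P$. This guarantees that the image of~$P$ in $\kk[\xx]/I_\rho$ coincides with $P/I_\rho$; moreover, since the binomial generators of~$P$ map to binomials (possibly reduced to monomials or zero) in the quotient, the image is visibly a binomial ideal.

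Next I would construct the identification $\kk[\xx]/I_\rho \cong \kk[Q]$. Because $L \subseteq \ZZ^J$ is saturated, the group $\ZZ^n/L$ is torsion-free, and the image $Q$ of $\NN^n$ in $\ZZ^n/L$ is an affine semigroup. Fixing a set-theoretic transversal $\alpha \mapsto u_\alpha \in \NN^n$ for the classes $\alpha \in Q$, I would define the assignment $\xx^\uu \mapsto \rho(\uu - u_\alpha)\ttt^\alpha$ whenever $\uu \in \NN^n$ lies in the class $\alpha$ (so $\uu - u_\alpha \in L$ and $\rho(\uu - u_\alpha)$ is defined). The homomorphism property of~$\rho$ makes this multiplicative, its kernel contains every generator $\xx^\uu - \rho(\uu-\vv)\xx^\vv$ of~$I_\rho$, and monomials indexed by distinct classes of~$Q$ remain linearly independent in the quotient, so the assignment extends to a ring isomorphism $\kk[\xx]/I_\rho \cong \kk[Q]$. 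Under it, the face $F \subseteq Q$ that is the image of~$\NN^J$ has prime $\pp_F$ equal to $I_{\rho,J}/I_\rho$, since both are generated by the images of the variables $x_i$ for $i \notin J$.

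With the identification in hand, the conclusion follows from the purely formal observation that if $I \subseteq P \subseteq \qq \subseteq R$ with $P$ a $\qq$-primary ideal, then $P/I$ is $(\qq/I)$-primary in $R/I$; applied with $I = I_\rho$ and $\qq = I_{\rho,J}$, this produces the $\pp_F$-primary binomial ideal $P/I_\rho \subseteq \kk[Q]$ asserted by the proposition. The one step demanding genuine care is the identification in the previous paragraph: one must verify that the $\rho$-twisted assignment is compatible with multiplication and has kernel exactly~$I_\rho$, which is essentially the content of Corollary~\ref{c:prime} unpacked at the ring level.
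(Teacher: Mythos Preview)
Your overall architecture matches the paper's: invoke Theorem~\ref{t:algbinom} to get $I_\rho \subseteq P$, identify $\kk[\xx]/I_\rho$ with an affine semigroup ring $\kk[Q]$ so that $I_{\rho,J}/I_\rho$ becomes a monomial prime~$\pp_F$, and then observe that primariness and binomiality pass to the quotient.  The paper simply cites Theorem~\ref{t:prime} and Corollary~\ref{c:prime} for the identification and records $Q = (\NN^J/L)\times\NN^{\oJ}$ with $F = \NN^J/L$.

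However, your explicit construction of the isomorphism has a gap: the transversal-based map $\xx^\uu \mapsto \rho(\uu - u_\alpha)\ttt^\alpha$ is \emph{not} multiplicative in general.  If $\uu$ has class~$\alpha$ and $\vv$ has class~$\beta$, then the image of $\xx^{\uu+\vv}$ is $\rho(\uu+\vv - u_{\alpha+\beta})\ttt^{\alpha+\beta}$, whereas the product of the images is $\rho(\uu+\vv - u_\alpha - u_\beta)\ttt^{\alpha+\beta}$.  These differ by $\rho(u_\alpha + u_\beta - u_{\alpha+\beta})$, the value of~$\rho$ on the $2$-cocycle of the section, which has no reason to equal~$1$.  (Concretely: take $n=2$, $L = \ZZ(1,-1)$, $\rho(1,-1)$ a primitive cube root of unity, and a transversal with $u_1 = (0,1)$ but $u_2 = (2,0)$.)  The fix is to use the saturatedness of~$L$ to extend $\rho$ to a character $\tilde\rho:\ZZ^n \to \kk^*$ and send $\xx^\uu \mapsto \tilde\rho(\uu)\,\ttt^{\bar\uu}$; this \emph{is} multiplicative, and its kernel is exactly~$I_\rho$.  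This is precisely what Theorem~\ref{t:prime} and Corollary~\ref{c:prime} package, which is why the paper just cites them.
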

\begin{proof}
This is an immediate consequence of Theorem~\ref{t:algbinom} and
Theorem~\ref{t:prime} along with Corollary~\ref{c:prime}: the affine
semigroup~$Q$ is $(\NN^J\!/L) \times \NN^\oJ$, and the face~$F$ is the
copy of~$\NN^J\!/L = (\NN^J\!/L) \times \{0\}$ in~$Q$.
\end{proof}

Thus the algebra of
general binomial associated primes for polynomial rings is lifted from
the algebra of monomial associated primes in affine semigroup rings.
The final step is isolating how the combinatorics, namely
Theorem~\ref{t:monPrimary}, lifts.  Since the algebra of quotienting
$\kk[\xx]$ modulo~$I_\rho$ corresponds to the quotient of~$\NN^n$
modulo~$L$, we expect the lifted finiteness conditions to involve
cosets of~$L$.

\begin{defn}
A subset of~$\NN^n$ is \emph{$L$-bounded} for a sublattice $L
\subseteq \ZZ^n$ if the subset is contained in a finite union of
cosets of~$L$.
\end{defn}

\begin{cor}\label{c:primDecomp}
Fix a binomial ideal $I \subseteq \kk[\xx]$ with $\kk$ algebraically
closed of characteristic~$0$.  If $I_{\rho,J}$ is minimal over~$I$,
then the $I_{\rho,J}$-primary component of~$I$ is
$$%
  P = I' + \<\xx^u \mid C_u \in \pi_0 G_{I'[\ZZ^J]} \text{ is not $L$-bounded}\>,
$$
where $I'[\ZZ^J]$ is the localization along~$\NN^J$, and $I'$ is
defined, using $\xx_J = \prod_{j \in J} x_j$, to be
$$%
  I' = \big((I + I_\rho):\xx_J^\infty\big) = (I + I_\rho)[\ZZ^J] \cap \kk[\xx].
$$
If $I_{\rho,J}$ is associated to~$I$ but not minimal over~$I$, then
for any monomial ideal $K$ containing a sufficiently high power of\/
$\mm_J = \<x_i \mid i \notin J\>$, an $I_{\rho,J}$-primary component
of~$I$ is defined as $P$ is, above, but using $I'_K$ in place of~$I'$,
where
$$%
  I'_K = \big((I + I_\rho + K):\xx_J^\infty\big).
$$
\end{cor}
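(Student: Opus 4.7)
The plan is to lift Theorem~\ref{t:monAss} from the affine semigroup ring $\kk[Q] = \kk[\xx]/I_\rho$ back to $\kk[\xx]$, using Propositions~\ref{p:polyringPrimary} and~\ref{p:polyringPrimary'}, and then to reduce the embedded case to the minimal case by adjoining a sufficiently high power of~$\mm_J$.

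First I would handle the minimal case. By Proposition~\ref{p:polyringPrimary}, the $I_{\rho,J}$-primary component of~$I$ is the preimage in $\kk[\xx]$ of the $(I_{\rho,J}/I_\rho)$-primary component of $\ol I = (I + I_\rho)/I_\rho$ in the quotient $\kk[Q] = \kk[\xx]/I_\rho$. Proposition~\ref{p:polyringPrimary'} identifies $Q$ with $(\NN^J\!/L) \times \NN^\oJ$ and the prime $I_{\rho,J}/I_\rho$ with~$\pp_F$ for the face $F = (\NN^J\!/L) \times \{0\}$. Since $I_{\rho,J}$ is minimal over~$I$, the prime $\pp_F$ is minimal over~$\ol I$, and Theorem~\ref{t:monAss} delivers the $\pp_F$-primary component of~$\ol I$ as $(\ol I : \ttt^F) + \<\ttt^u \mid C_u \notin \ol\pi_0 G_{\ol I[\ZZ F]}\>$. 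The preimage of $(\ol I : \ttt^F)$ in $\kk[\xx]$ is precisely $I' = (I + I_\rho) : \xx_J^\infty$, since inverting $\ttt^F$ downstairs is the same as inverting~$\xx_J$ upstairs modulo~$I_\rho$.

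The crucial translation step is a combinatorial dictionary between $L$-boundedness upstairs and finiteness downstairs. Because $I_\rho \subseteq I'$, any two lattice points of $\NN^n$ that differ by an element of~$L$ are already joined in~$G_{I'}$, so every connected component of $G_{I'[\ZZ^J]}$ is a union of $L$-cosets inside $\ZZ^J \times \NN^\oJ$. The quotient map $\ZZ^J \times \NN^\oJ \onto Q[\ZZ F] = (\ZZ^J\!/L) \times \NN^\oJ$, whose fibers are exactly the $L$-cosets, therefore carries the connected components of $G_{I'[\ZZ^J]}$ bijectively onto those of $G_{\ol I[\ZZ F]}$, with $L$-bounded components on the left matching finite components on the right. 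This reconciles the two descriptions of~$P$ and establishes the minimal case.

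For the embedded case, I would choose $K \supseteq \mm_J^\ell$ with $\ell$ large enough that $I_{\rho,J}$ becomes minimal over $I + K$, and then apply the minimal case to $I + K$ in place of~$I$. The point is that $V(I + K) \subseteq V(\mm_J)$ eliminates the associated primes of~$I$ strictly containing~$I_{\rho,J}$, while Theorem~\ref{t:algbinom} guarantees that every $I_{\rho,J}$-primary ideal already contains a power of~$\mm_J$, so the localizations of $I$ and $I+K$ at $I_{\rho,J}$ agree once $\ell$ is sufficiently large. The hard part will be this embedded step: ensuring that the resulting~$P$, built from $I'_K = (I + I_\rho + K) : \xx_J^\infty$, is a genuine $I_{\rho,J}$-primary component of the original~$I$. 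Embedded primary components are nonunique, so the verification is inherently less rigid than in the minimal case, which amounts to a direct translation of Theorem~\ref{t:monAss} through the dictionary just described.
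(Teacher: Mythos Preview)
Your proposal is correct and follows essentially the same route as the paper's proof sketch: lift Theorem~\ref{t:monAss} via Propositions~\ref{p:polyringPrimary} and~\ref{p:polyringPrimary'}, using the dictionary between $L$-bounded components of $G_{I'[\ZZ^J]}$ upstairs and finite components of $G_{\ol I[\ZZ F]}$ downstairs, and handle the embedded case by adjoining~$K$ so that $I_{\rho,J}$ becomes minimal over $I + I_\rho + K$ (note: over $I + I_\rho + K$, not merely $I+K$, since any prime containing $I_\rho + \mm_J^\ell$ automatically contains $I_{\rho,J}$). Your explicit articulation of the $L$-boundedness/finiteness dictionary is more detailed than the paper's sketch, which simply cites \cite[Theorem~3.2]{primDecomp}.
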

\begin{proof}[Proof sketch]
This is \cite[Theorem~3.2]{primDecomp}.  The key is to lift the
monomial minimal prime case for affine semigroup rings in
Theorem~\ref{t:monAss} to the current binomial associated prime case
in polynomial rings using Propositions~\ref{p:polyringPrimary}
and~\ref{p:polyringPrimary'}.  For
an embedded prime~$I_{\rho,J}$, one notes that any given
$I_{\rho,J}$-primary component of~$I$ must contain a sufficiently high
power of~$\mm_J$, so it is logical to begin the search for an
$I_{\rho,J}$-primary component by simply throwing such monomials along
with~$I_\rho$ into~$I$.
But then $I_{\rho,J}$ is minimal over the resulting ideal $I + I_\rho
+ K$, so the minimal prime case applies.
\end{proof}

The definition of~$P$ in the theorem says that $G_P$ has two types of
connected components: the ones that are $L$-bounded upon localization
along~$\ZZ^J$, and the connected component consisting of exponents on
monomials in~$P$.  The theorem says that $G_P$ shares all but its
monomial component with the graph~$G_{I'}$, and that the other
connected components of~$G_{I'}$ fail to remain $L$-bounded upon
localization along~$\ZZ^J$.  In the case where $I_{\rho,J}$ is an
embedded prime, the graph-theoretic explanation is that $G_{I'}$ has
too many connected components that remain $L$-bounded upon
localization; in fact, there are infinitely many $\NN^J$-orbits.  The
hack of adding~$K$ throws all but finitely many $\NN^J$-orbits into
the $L$-infinite ``big monomial'' connected component.

\begin{example}
A primary decomposition of the ideal $I =
\<{\color{darkpurple}x^2-xy}, {\color{darkgreen}xy-y^2}\>$ was already
given in Example~\ref{e:graph}.  Analyzing it from the perspective of
Corollary~\ref{c:primDecomp}
completes the heuristic insight.

First let $I_{\rho,J} = \<x - y\>$, so $J = \{1,2\}$ and $\rho: L \to
\kk^*$ is trivial on the lattice~$L$ generated by~$[\twoline
{\phantom-1}{-1}]$.
Then $I' = I + \<x - y\>$ is already prime.  Passing from $I$ to~$I'$
has the sole effect of joining the two isolated points (the basis
vectors) on the axes together.

Now let $I_{\rho,J} = \<x,y\>$, so $J = \nothing$ and $\rho$ is the
trivial (only) character defined on $L = \ZZ^J = \{0\}$.  Every
connected component of~$G_I$ in Example~\ref{e:graph} remains
$L$-bounded upon localization along~$\ZZ^J$, but there are infinitely
many such components.  Choosing $K = \<x,y\>^e$, so that $I'_K = I' +
\<x,y\>^e$, kills off all but finitely many, to~get
$$%
\psfrag{e}{${\color{red}\scriptstyle e}$}
\psfrag{x}{$\scriptstyle x$}
\psfrag{y}{$\scriptstyle y$}
\begin{array}{@{}cc@{}}
\\[-3ex]
  \begin{array}{@{}c@{}}
  P = \<{\color{darkpurple}x^2-xy}, {\color{darkgreen}xy-y^2}\> +
  {\color{red}\<x,y\>^e} \subseteq \kk[x,y] 
  \quad\;\Implies\
  \end{array}
&
  G_P\ \ = \begin{array}{c@{}}\includegraphics{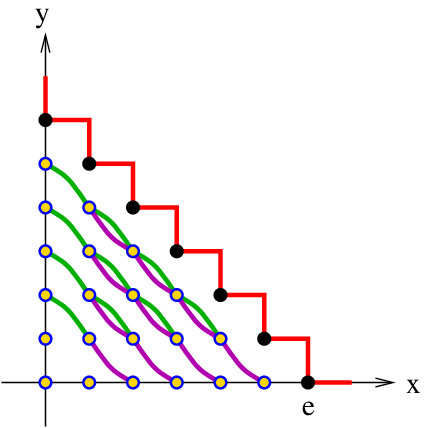}\end{array}
\end{array}
$$%
In particular, taking $e = 2$ recovers the primary decomposition from
Example~\ref{e:graph}.
\end{example}

\subsection{Finding associated primes combinatorially}\label{s:finding}

The constructions of binomial primary components in previous sections
assume that a monomial or binomial associated prime of a binomial
ideal has been given.  To conclude the discussion of primary
decomposition of binomial ideals, it remains to examine the set of
associated primes.  The existence of binomial primary decompositions
hinges on a fundamental result, due to Eisenbud and Sturmfels
\cite[Theorem~6.1]{binomialIdeals}, that was a starting point for all
investigations involving primary decomposition of
\mbox{binomial}~ideals.

\begin{thm}\label{t:es}
Every associated prime of a binomial ideal in~$\kk[\xx]$ is a binomial
prime if the field\/~$\kk$ is algebraically closed.
\end{thm}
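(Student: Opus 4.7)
The plan is to reduce the theorem to the cellular case and then extract a partial character from the binomial relations. First I would establish a \emph{cellular decomposition}: for each $J \subseteq \{1,\ldots,n\}$ and sufficiently large $N$, the ideal $I^{(J)} := \bigl((I + \mm_\oJ^N) : \xx_J^\infty\bigr)$ is still binomial, with each $x_j$ ($j \in J$) a nonzerodivisor modulo $I^{(J)}$ and each $x_i$ ($i \notin J$) nilpotent modulo $I^{(J)}$. One checks that $I$ equals the intersection over suitable $J$ of these cellular ideals, and that every associated prime of $I$ is associated to one of the $I^{(J)}$. It thus suffices to treat the case where $I$ itself is cellular with cell $J$.

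In the cellular case, every associated prime $\pp$ contains $\mm_\oJ$, since those variables are nilpotent, so $\pp = \mm_\oJ + \qq$ for some prime $\qq$ in $\kk[\xx_J]$; the task is then to show $\qq$ arises from a character $\rho: L \to \kk^*$ on a saturated sublattice $L \subseteq \ZZ^J$. My approach is to localize at $\xx_J$ and work in $\kk[\xx_J^{\pm 1}][\xx_\oJ]$, where the ideal remains binomial and the associated primes containing $\mm_\oJ$ are preserved. Given a witness $f$ whose annihilator is $\pp$, I would exploit nilpotency of $\xx_\oJ$ together with the congruence $\sim_I$ from Section \ref{s:cong} to extract binomials $\xx^u - \lambda\xx^v \in \pp$ in which the scalar $\lambda \in \kk^*$ depends only on $u - v \in \ZZ^J$. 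The resulting assignment $u - v \mapsto \lambda$ defines a partial character on the lattice $L \subseteq \ZZ^J$ generated by such differences, and a direct comparison using Corollary \ref{c:prime} identifies $\pp$ with $I_{\rho,J}$.

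The main obstacle is passing from a character on $L$ to one on its saturation inside $\ZZ^J$: the relations forced by $I$ may only determine $\rho$ on a non-saturated sublattice, and producing a genuine binomial prime in the sense of Corollary \ref{c:prime} requires extending $\rho$ across the finite-index inclusion $L \into L^\sat$. This amounts to solving equations of the form $\lambda^m = \mu$ in $\kk^*$, which is precisely where algebraic closure is indispensable; over a non-closed field, one recovers $\pp$ only after base change. A secondary technical hurdle is showing that the extended character yields \emph{exactly} $\pp$ and not some strictly smaller or larger binomial ideal. I would handle this by induction on the length of a filtration of $\kk[\xx]/I$ by the powers of~$\mm_\oJ$, tracking the connected components of~$G_I$ after each localization and quotient step to ensure that the partial character remains well-defined and that all relations in~$\pp$ are accounted for.
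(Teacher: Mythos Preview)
The paper does not prove this theorem; it merely cites \cite[Theorem~6.1]{binomialIdeals} of Eisenbud and Sturmfels. Your proposal---reduce to the cellular case, localize at the nonzerodivisor variables, and extract a partial character---is essentially the Eisenbud--Sturmfels strategy, so you are reconstructing the original argument rather than comparing to a proof given here.

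That said, there is a genuine soft spot in your sketch. Producing binomials $\xx^u - \lambda\xx^v \in \pp$ and assembling them into a character $\rho$ on a sublattice $L \subseteq \ZZ^J$ only yields the containment $I_{\rho,J} \subseteq \pp$; your proposal for the reverse inclusion, ``induction on the length of a filtration by powers of~$\mm_\oJ$,'' is too vague to carry weight as written, and it is not clear what the inductive hypothesis would be or why the connected components of~$G_I$ would cooperate with that filtration. In the Eisenbud--Sturmfels argument the equality is obtained differently: after inverting $\xx_J$, one invokes the classification of binomial ideals in Laurent polynomial rings \cite[Theorem~2.1]{binomialIdeals}, which says that every such ideal \emph{is} an $I_\sigma$ for a unique partial character~$\sigma$. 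The prime $\pp[\ZZ^J]$, being binomial in the Laurent ring after reducing modulo $\mm_\oJ$, is therefore forced to equal some $I_\rho$ exactly, with $\rho$ a character on a saturated lattice. Your sketch would close the gap cleanly if you routed through that Laurent classification rather than attempting to build the character by hand from a single witness and then arguing about filtrations.
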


Although the statement is for polynomial rings, a simple reduction
implies the existence of binomial primary decomposition in the
generality of monoid algebras as defined in Section~\ref{s:cong},
given the construction of binomial primary components.

\begin{cor}\label{c:char0}
Fix a finitely generated commutative monoid~$Q$ and an algebraically
closed field\/~$\kk$ of characteristic~$0$.  Every binomial ideal~$I$
in~$\kk[Q]$ admits a binomial primary decomposition: $I = P_1 \cap
\cdots \cap P_r$ for binomial ideals $P_1,\ldots,P_r$.
\end{cor}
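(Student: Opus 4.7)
The plan is to reduce to the case of binomial ideals in a polynomial ring, where Theorem~\ref{t:es} and Corollary~\ref{c:primDecomp} apply directly. First, a choice of finite generating set for~$Q$ yields a monoid surjection $\NN^n \onto Q$, whose kernel congruence~$\til$ presents $Q = \NN^n/\til$. At the level of algebras this realizes $\kk[Q] = \kk[\xx]/I_\til$, where $I_\til$ is the pure-difference binomial ideal generated by $\xx^u - \xx^v$ for all pairs $u \til v$. Thus $\kk[Q]$ is itself a quotient of a polynomial ring by a binomial ideal.

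Next I would lift the given binomial ideal $I \subseteq \kk[Q]$ to the binomial ideal $\wt I \subseteq \kk[\xx]$ defined as the preimage of~$I$ under $\kk[\xx] \onto \kk[Q]$; equivalently, $\wt I$ is generated by~$I_\til$ together with any set of binomial lifts of a binomial generating set for~$I$. Because $\kk[\xx]$ is noetherian, $\wt I$ has finitely many associated primes, and Theorem~\ref{t:es} asserts that each of them is a binomial prime $I_{\rho_i,J_i}$. Applying Corollary~\ref{c:primDecomp} to each associated prime produces a binomial $I_{\rho_i,J_i}$-primary component $P_i$ of~$\wt I$, yielding a binomial primary decomposition $\wt I = P_1 \cap \cdots \cap P_r$ in~$\kk[\xx]$.

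To descend the decomposition to~$\kk[Q]$, I would observe that every~$P_i$ contains~$I_\til$, since $I_\til \subseteq \wt I \subseteq P_i$. Hence the image $\ol P_i$ of~$P_i$ in $\kk[\xx]/I_\til = \kk[Q]$ is a well-defined ideal; it is binomial because it is generated by images of binomial generators of~$P_i$, and it is primary by the standard correspondence between primary ideals in a quotient ring and primary ideals of the ambient ring containing the ideal being modded out. Because this correspondence preserves finite intersections, taking images delivers $I = \ol P_1 \cap \cdots \cap \ol P_r$, the desired binomial primary decomposition.

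The substantive input here is Theorem~\ref{t:es}, without which there would be no reason to expect binomial primary decompositions to exist; the rest is an exercise in passing between $\kk[\xx]$ and~$\kk[Q]$. The only additional care needed is confirming that Corollary~\ref{c:primDecomp} applies to~$\wt I$, which it does because the hypotheses on~$\kk$ (algebraically closed of characteristic~$0$) are exactly those already assumed in the statement.
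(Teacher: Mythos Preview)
Your proof is correct and follows essentially the same approach as the paper: lift~$I$ to a binomial ideal in~$\kk[\xx]$ via a presentation $\NN^n \onto Q$, apply Theorem~\ref{t:es} and Corollary~\ref{c:primDecomp} there, and push the resulting binomial primary decomposition down to~$\kk[Q]$. Your version spells out the descent step (that each $P_i$ contains~$I_\til$ and that the quotient correspondence preserves both primariness and intersections) a bit more explicitly than the paper does, but the argument is the same.
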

\begin{proof}
Choose a presentation $\NN^n \onto Q$.  The kernel of the induced
presentation $\kk[\xx] \onto \kk[Q]$ is a binomial ideal
in~$\kk[\xx]$.  Therefore the preimage of $I$ in~$\kk[\xx]$ is a
binomial ideal~$I'$.  The image in~$\kk[Q]$ of any binomial primary
decomposition of~$I'$ is a binomial primary decomposition of~$I$.
Therefore it suffices to prove the case where $Q = \NN^n$ and $I' =
I$.  Since every associated prime of~$I$ is binomial by
Theorem~\ref{t:es}, the result follows from
Corollary~\ref{c:primDecomp}.
\end{proof}

Corollary~\ref{c:char0} is stated only for characteristic~$0$ to
demonstrate the connection between prior results in this survey.
However, the restriction is unnecessary.

\begin{thm}\label{t:allChar}
Corollary~\ref{c:char0} holds for fields of positive characteristic,
as well.
\end{thm}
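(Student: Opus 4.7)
My plan is to retrace the proof of Corollary~\ref{c:char0} and replace each ingredient that relies on characteristic zero by its positive-characteristic analogue. Three of the four ingredients carry over unchanged: the reduction from a monoid algebra $\kk[Q]$ to a polynomial ring $\kk[\xx]$ via a presentation $\NN^n \onto Q$ is characteristic-free, Theorem~\ref{t:es} holds over every algebraically closed field, and the congruence-theoretic machinery of Sections~\ref{s:primary} and~\ref{s:decomp}---the graph $G_I$, its connected components, $F$-finiteness, $L$-boundedness, and the criterion Theorem~\ref{t:monPrimary}---is defined purely combinatorially and does not involve the characteristic of~$\kk$.

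The single point that requires attention is Theorem~\ref{t:algbinom}, upon which Propositions~\ref{p:polyringPrimary} and~\ref{p:polyringPrimary'} and Corollary~\ref{c:primDecomp} all rely. In characteristic $p > 0$, an $I_{\rho,J}$-primary binomial ideal need not contain $I_\rho$ itself; instead, it contains $I_{\rho'}$ for some character $\rho'$ on a sublattice $L' \subseteq L$ whose quotient $L/L'$ is a finite $p$-group, as proved in \cite[Theorem~7.1]{binomialIdeals}. This $\rho'$ is intrinsic to the associated prime $I_{\rho,J}$ once the exponent of $L/L'$ is fixed, because in characteristic~$p$ the $p$th power map on $\kk^*$ is injective, so $\rho'$ has a unique extension to~$L$, namely~$\rho$. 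Crucially, $\kk[\xx]/I_{\rho'}$ is still an honest affine semigroup ring, with underlying monoid $\NN^n/L'$, so the entire monomial-prime analysis of Section~\ref{s:minMonom} applies in it verbatim.

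I would then mimic Corollary~\ref{c:primDecomp}, replacing $I_\rho$ by $I_{\rho'}$ and $L$-boundedness by $L'$-boundedness throughout. For each binomial associated prime $I_{\rho,J}$ of~$I$, set
$$
  I'' = \bigl((I + I_{\rho'}):\xx_J^\infty\bigr)
$$
and take as the candidate primary component
$$
  P = I'' + \bigl\<\xx^u \,\big|\, C_u \in \pi_0 G_{I''[\ZZ^J]} \text{ is not $L'$-bounded}\bigr\>.
$$
Because $\kk[\xx]/I_{\rho'}$ is an affine semigroup ring and $I_{\rho,J}/I_{\rho'}$ is a monomial prime in it, Theorem~\ref{t:monAss} produces a $\pp_F$-primary component whose preimage in $\kk[\xx]$ is a binomial $I_{\rho,J}$-primary ideal containing~$I$. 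Intersecting the resulting primary components over the finitely many associated primes of~$I$ gives a binomial primary decomposition.

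The main obstacle is establishing the positive-characteristic analogue of Theorem~\ref{t:algbinom} precisely enough to extract the pair $(L',\rho')$ intrinsically from the associated prime and~$I$; the ``Frobenius power'' phenomenon flagged in the parenthetical remark after Theorem~\ref{t:algbinom} is exactly what enlarges $L$ to~$L'$. Once this algebraic input is granted, the rest of the argument is purely combinatorial, and the explicit disclaimer at the start of Section~\ref{s:primary}---that characteristic and algebraic-closure assumptions enter only at the places they are mentioned---guarantees that Theorem~\ref{t:monPrimary}, Theorem~\ref{t:monAss}, and the congruence analysis of Section~\ref{s:cong} transfer without change. The final primary decomposition of~$I$ is then obtained by assembling the primary components exactly as in the proof of Corollary~\ref{c:char0}.
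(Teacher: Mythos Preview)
Your reduction from $\kk[Q]$ to $\kk[\xx]$ and the appeal to Theorem~\ref{t:es} are fine, and indeed the paper's own proof uses exactly that reduction.  But the paper does \emph{not} attempt to adapt the combinatorial machinery of Sections~\ref{s:primary}--\ref{s:decomp} to positive characteristic; it simply cites \cite[Theorem~7.1]{binomialIdeals} for the polynomial-ring case and is done.  Your route is genuinely different, and it has a real gap.

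The problem is the sentence ``$\kk[\xx]/I_{\rho'}$ is still an honest affine semigroup ring, with underlying monoid $\NN^n/L'$.''  The sublattice $L'$ you extract (essentially a Frobenius power $p^e L$) is \emph{not saturated} in~$\ZZ^n$ once $L/L'$ is a nontrivial $p$-group, so $\ZZ^n/L'$ has torsion and $\NN^n/L'$ is not an affine semigroup in the sense used throughout Section~\ref{s:primary}.  Consequently Theorem~\ref{t:monAss} does not apply verbatim.  Worse, the image $I_{\rho,J}/I_{\rho'}$ is not a monomial prime $\pp_F$ in that quotient: for $u-v \in L \setminus L'$ the binomial $\xx^u - \rho(u-v)\xx^v$ maps to a genuine two-term element $\ttt^{\bar u} - \rho(u-v)\ttt^{\bar v}$ with $\bar u \neq \bar v$, so the associated prime you are aiming at is not of the form covered by Section~\ref{s:minMonom}.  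Both hypotheses of Theorem~\ref{t:monAss}---the ambient ring being an affine semigroup ring and the prime being~$\pp_F$ for a face~$F$---fail.

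That this is not merely a cosmetic issue is signalled by Problem~3.9 immediately following the theorem: characterizing primary binomial ideals combinatorially in positive characteristic is posed there as open.  To salvage your approach you would need to extend Theorems~\ref{t:monPrimary} and~\ref{t:monAss} to monoid algebras $\kk[\NN^n/L']$ with torsion and to binomial (not monomial) primes in them, which is substantially more than a routine adaptation.  The paper sidesteps all of this by invoking Eisenbud--Sturmfels directly.
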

\begin{proof}
For polynomial rings this is \cite[Theorem~7.1]{binomialIdeals}, and
the case of general monoids~$Q$ follows by the argument in the proof
of Corollary~\ref{c:char0}.
\end{proof}

What's missing in the positive characteristic case is combinatorics of
primary ideals.

\begin{prob}
Characterize primary binomial ideals and primary components of
binomial ideals combinatorially in positive characteristic.
\end{prob}

Note, however, that a solution to this problem would still not say how
to discover---from the combinatorics---which primes are associated.
The same is true in characteristic~$0$.  Thus
Corollary~\ref{c:primDecomp} is unsatisfactory for two reasons:
\begin{itemize}
\item%
it requires strong hypotheses on the field~$\kk$; and
\item%
it assumes we know which primes $I_{\rho,J}$ are associated to~$I$.
\end{itemize}
Fortunately, there is a combinatorial, lattice-point method to
recognize associated primes---or at least, to reduce the recognition
to a finite problem.  The main point is Theorem~\ref{t:mesodecomp}:
the combinatorics of the graph~$G_I$ can be used to construct a
decomposition of~$I$ as an intersection of ``primary-like'' binomial
ideals in a manner requiring no hypotheses on the characteristic or
algebraic closure of the base field.
The statement employs some additional concepts.

\begin{defn}
A subset of $\ZZ^n$ is \emph{$J$-bounded} if it intersects only
finitely many cosets of~$\ZZ^J$ in~$\ZZ^n$.
\end{defn}

\begin{lemma}
If $I \subseteq \kk[\xx]$ is a binomial ideal, then $\ZZ^J$ acts on
the set of $J$-bounded
components of the graph~$G_{I[\ZZ^J]}$ on~$\ZZ^J \times \NN^\oJ$
induced by the localization~$I[\ZZ^J]$ along
$\NN^J$.
\end{lemma}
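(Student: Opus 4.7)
The plan is to verify two things: first, that translation by $\ZZ^J$ on the first factor of $\ZZ^J \times \NN^\oJ$ permutes the connected components of $G_{I[\ZZ^J]}$; and second, that this translation action preserves the property of being $J$-bounded. Together these give a $\ZZ^J$-action on the set of $J$-bounded components, and the monoid-action axioms are then automatic since translation by an abelian group is manifestly a group action.

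For the first point, the key observation is that in the localization $\kk[\xx][\ZZ^J] = \kk[\ZZ^J \times \NN^\oJ]$, each monomial $\xx^a$ with $a \in \ZZ^J$ is a unit. Since $I[\ZZ^J]$ is an ideal, it is closed under multiplication by these units; consequently, if $\xx^u - \lambda\xx^v \in I[\ZZ^J]$ with $\lambda \neq 0$, then $\xx^{u+a} - \lambda\xx^{v+a} = \xx^a(\xx^u - \lambda\xx^v) \in I[\ZZ^J]$ for every $a \in \ZZ^J$. Thus the bijection $u \mapsto u+a$ on the vertex set $\ZZ^J \times \NN^\oJ$ is a graph automorphism of $G_{I[\ZZ^J]}$, so it maps each connected component~$C$ to a connected component~$C + a$.

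For the second point, translation by an element $a \in \ZZ^J$ fixes each coset of~$\ZZ^J$ in~$\ZZ^n$ setwise. Hence, if $C$ meets only finitely many cosets of~$\ZZ^J$, so does $C + a$, and the converse holds by translating back by~$-a$. Therefore the $\ZZ^J$-action on $\pi_0 G_{I[\ZZ^J]}$ restricts to an action on the subset of $J$-bounded components. No step here looks substantive: the only place one could stumble is conflating $J$-boundedness with some stronger finiteness, but the definition is exactly tailored so that translation by~$\ZZ^J$ preserves it, since $\ZZ^J$ acts trivially on the coset space $\ZZ^n/\ZZ^J$.
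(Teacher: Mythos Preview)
Your proof is correct and uses the same idea as the paper: the Laurent monomials $\xx^a$ for $a \in \ZZ^J$ are units in the localization, so multiplication by them induces graph automorphisms of $G_{I[\ZZ^J]}$. The paper's proof is even terser---it simply observes that $\ZZ^J$ acts on \emph{all} connected components for this reason, leaving the restriction to $J$-bounded components implicit; your explicit verification that translation by $\ZZ^J$ preserves $J$-boundedness (via triviality on the coset space $\ZZ^n/\ZZ^J$) is a welcome detail but not a different approach.
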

\begin{proof}
In fact, $\ZZ^J$ acts on all of the connected components, because the
Laurent monomials $\xx^u$ for $u \in \ZZ^J$ are units modulo $I[\ZZ^J]
= I[\ZZ F]$ for the face $F = \NN^J$.
\end{proof}

\begin{defn}\label{d:L}
A \emph{witness} for a sublattice $L \subseteq \ZZ^J$
\emph{potentially associated} to~$I$
is any element in a $J$-bounded connected component of~$G_{I[\ZZ^J]}$
whose stabilizer is~$L$.
\end{defn}

\begin{example}
The binomial ideal
$$%
\psfrag{x}{$\scriptstyle x$}
\psfrag{y}{$\scriptstyle y$}
\begin{array}{@{}cc@{}}
  \begin{array}{@{}c@{}}
  I = \<y-x^2y,y^2-xy^2,y^3\> \subseteq \kk[x,y]
  \quad\ \longleftrightarrow\
  \end{array}
&
  \begin{array}{c@{}}\includegraphics{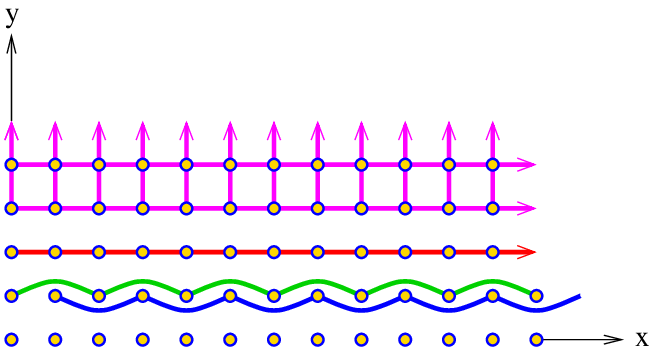}\end{array}
\end{array}
$$
induces the depicted congruence.  Its potentially associated lattices
are all contained in $\ZZ = \ZZ^{\{1\}}$, parallel to the $x$-axis.
The lattices are generated by~$0$, by~$[\twoline 20]$, and
by~$[\twoline 10]$.
\end{example}

The subset~$J$ is part of the definition of potentially associated
sublattice; it is not enough to specify $L$ alone.  The notion of
\emph{associated lattice}, without the adverb ``potentially'', would
require further discussion of primary decomposition of congruences on
monoids; see
the definition of associated lattice in \cite{mesoprimary}.  That
said, the set of potentially associated lattices, which contains the
set of associated ones, suffices for the purposes here, although
sharper results could be stated with the more precise~notion.

\begin{prop}\label{p:witness}
Every binomial ideal $I \subseteq \kk[\xx]$ has finitely many
potentially associated lattices $L \subseteq \ZZ^J$.  If $K =
(I:\xx^u) \subseteq \kk[\xx]$ is the annihilator of\/~$\xx^u$
in~$\kk[\xx]/I$ for a witness $u \in \NN^n$, then $K[\ZZ^J] + \mm_J =
I_{\sigma,J}[\ZZ^J]$ for a uniquely determined \emph{witness
character} $\sigma: L \to \kk^*$.  Given~$I$, each $L \subseteq \ZZ^J$
determines finitely many witness~\mbox{characters}.
\end{prop}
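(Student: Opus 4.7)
The plan is to establish the three assertions in order: construct the witness character $\sigma$ and verify the ideal identity $K[\ZZ^J] + \mm_J = I_{\sigma,J}[\ZZ^J]$; bound the number of witness characters for each fixed $L$; and bound the set of potentially associated lattices. The unifying tool is the $\oQ$-graded decomposition of $M = \kk[\xx][\ZZ^J]/I[\ZZ^J]$ from Lemma~\ref{l:hilb}, under which each nonzero homogeneous component is one-dimensional over~$\kk$; finiteness will come from Noetherianity of the Laurent polynomial ring $\kk[\ZZ^J]$.

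First, fix a witness $u$ for $L$ in a $J$-bounded component $C_u$ of $G_{I[\ZZ^J]}$. For each $l \in L$, the stabilizer condition $u + l \sim u$ combined with Lemma~\ref{l:hilb} yields a unique nonzero $\sigma(l) \in \kk$ with $\xx^{u+l} - \sigma(l)\xx^u \in I[\ZZ^J]$, equivalently $\xx^l - \sigma(l) \in K[\ZZ^J]$ by definition of the colon ideal. Multiplicativity of $\sigma$ is forced by the identity $\xx^{l_1+l_2} - \sigma(l_1)\sigma(l_2) = \xx^{l_2}(\xx^{l_1}-\sigma(l_1)) + \sigma(l_1)(\xx^{l_2}-\sigma(l_2))$ together with the uniqueness clause of Lemma~\ref{l:hilb}, producing a character $\sigma: L \to \kk^*$. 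The inclusion $I_{\sigma,J}[\ZZ^J] \subseteq K[\ZZ^J] + \mm_J$ follows at once. For the reverse, take $y \in K[\ZZ^J]$ with $y\xx^u \in I[\ZZ^J]$, decompose $y$ by sorting its monomials into cosets of~$L$ in $\ZZ^J$ together with $\oJ$-contributions landing in $\mm_J$, and expand $y\xx^u$ in $M$ according to the $\oQ$-grading. Within each coset, Lemma~\ref{l:hilb} forces the relevant partial sum to be a $\kk$-linear combination of binomials $\xx^l - \sigma(l)$ with $l \in L$, placing it in $I_\sigma[\ZZ^J]$; residual cross-coset discrepancies land in $\mm_J$.

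For the finiteness statements, observe that $M/\mm_J M$ is a cyclic, hence Noetherian, $\kk[\ZZ^J]$-module under the identification $\kk[\xx][\ZZ^J]/\mm_J = \kk[\ZZ^J]$. For each fixed $L$, a witness character $\sigma$ furnishes a nonzero element of $M/\mm_J M$ annihilated by $J_\sigma = \<\xx^l - \sigma(l) \mid l \in L\>$, so witness characters correspond to ideals arising among the annihilators of elements of $M/\mm_J M$; Noetherianity bounds this family. For potentially associated lattices, it suffices to fix $J$ (which ranges over a finite set of subsets of $\{1,\ldots,n\}$); distinct stabilizers $L$ correspond to distinct $\ZZ^J$-orbits of $J$-bounded components of $G_{I[\ZZ^J]}$, and a filtration of $M/\mm_J M$ whose subquotients are supported on such orbits has finite length by Noetherianity. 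The main obstacle is the reverse containment in the first step: rigorously bookkeeping the $\oQ$-graded expansion of $y\xx^u$ across cosets of~$L$, and showing that every cancellation forced by Lemma~\ref{l:hilb} actually arises from $I_\sigma[\ZZ^J] + \mm_J$ rather than from a larger auxiliary ideal, is where the combinatorial substance of the proof resides.
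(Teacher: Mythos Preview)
Your construction of the witness character $\sigma$ via the $\oQ$-grading and the verification of $K[\ZZ^J] + \mm_J = I_{\sigma,J}[\ZZ^J]$ are correct and match the paper, which simply cites the Eisenbud--Sturmfels characterization of binomial ideals in Laurent polynomial rings for this step. The reverse containment you flag as the ``main obstacle'' is not the difficulty; it follows routinely from the one-dimensionality in Lemma~\ref{l:hilb}.

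The finiteness arguments, however, have a genuine gap. Your reduction to the cyclic $\kk[\ZZ^J]$-module $M/\mm_J M$ collapses too much: a witness $u \in \NN^n$ for $L \subseteq \ZZ^J$ can have nonzero coordinates outside~$J$, in which case $\xx^u \in \mm_J$ and its image in $M/\mm_J M$ vanishes, so no nonzero element annihilated by~$J_\sigma$ is produced. For a concrete failure take $I = \<xy - y\> \subseteq \kk[x,y]$ with $J = \{1\}$: every witness for the lattice $L = \ZZ^{\{1\}}$ lies at positive $y$-coordinate. Even were such an element nonzero, ``Noetherianity bounds this family'' is not a valid inference as stated, since a Noetherian module can have infinitely many distinct annihilator ideals. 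The same collapse undermines your argument for finitely many potentially associated lattices: the $\ZZ^J$-orbits of $J$-bounded components occur at arbitrary $\oJ$-heights and are invisible in $M/\mm_J M$.

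The paper's route is different for both finiteness claims. For witness characters, if $u \leq v$ componentwise in~$\NN^n$ then $(I:\xx^u) \subseteq (I:\xx^v)$, hence $I_{\sigma_u,J}[\ZZ^J] \subseteq I_{\sigma_v,J}[\ZZ^J]$; if $\sigma_u \neq \sigma_v$ on the same~$L$, subtracting $\xx^l - \sigma_u(l)$ from $\xx^l - \sigma_v(l)$ places a nonzero constant in the larger ideal, contradicting its properness. Thus witnesses with distinct characters are incomparable in~$\NN^n$, and Dickson's lemma finishes. For potentially associated lattices the paper appeals to the noetherian property of congruences on finitely generated commutative monoids, citing the companion paper on mesoprimary decomposition.
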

\begin{proof}
This is
proved in \cite{mesoprimary} on the way to the
existence theorem for combinatorial mesoprimary decomposition.  The
finiteness of the set of potentially associated lattices traces back
to the noetherian property for congruences on finitely generated
commutative monoids.  The conclusion concerning~$K$ is little more
than the characterization of binomial ideals in Laurent polynomial
rings~\cite[Theorem~2.1]{binomialIdeals}.  The finiteness of the
number of witness characters occurs because witnesses for $L \subseteq
\ZZ^J$ with distinct witness characters are forced to be incomparable
in~$\NN^n$.
\end{proof}

In Proposition~\ref{p:witness}, the domain $L$ of the
character~$\sigma$ appearing in $I_{\sigma,J}$ need not be saturated
(see Definition~\ref{d:IrhoJ}), and no hypotheses are required on the
field~$\kk$.

Deducing combinatorial statements about associated primes or primary
decompositions of binomial ideals is often most easily accomplished by
reducing to the case of ideals with the simplest possible structure in
this regard.

\begin{defn}\label{d:mesoprimary}
A binomial ideal with a unique potentially associated lattice is
called \emph{mesoprimary}.  A \emph{mesoprimary decomposition} of a
binomial ideal $I \subseteq \kk[\xx]$ is an expression of~$I$ as an
intersection of finitely many mesoprimary binomial ideals.
\end{defn}

\begin{example}\label{e:mesoprimary}
Primary binomial ideals in polynomial rings over algebraically closed
fields of characteristic~$0$ are mesoprimary.  That is basically the
content of Theorem~\ref{t:monPrimary} for such fields, given
Proposition~\ref{p:polyringPrimary'}.  More precisely, the
combinatorics of mesoprimary ideals is just like that of primary
ideals, except that instead of an affine semigroup acting semifreely,
an arbitrary finitely generated cancellative monoid acts semifreely;
see
the characterizations of mesoprimary congruences in
\cite{mesoprimary}.
\end{example}

Definition~\ref{d:mesoprimary} only stipulates constancy of the
combinatorics, not the arithmetic---meaning the witness
characters---but the arithmetic constancy is automatic.

\begin{lemma}
If $I$ is a mesoprimary ideal, then the witnesses for the unique
potentially associated lattice all share the same witness character.
\end{lemma}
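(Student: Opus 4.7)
The plan is to compare the two witness characters $\sigma_u, \sigma_v \colon L \to \kk^*$ attached to witnesses $u$ and $v$ for the unique potentially associated lattice $L \subseteq \ZZ^J$ by a direct binomial computation in the localization $\kk[\xx][\ZZ^J]$.

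The key calculation is as follows. Proposition~\ref{p:witness} guarantees, for each $\ell \in L$, the membership relations $\xx^{u+\ell} - \sigma_u(\ell)\xx^u \in I[\ZZ^J]$ and $\xx^{v+\ell} - \sigma_v(\ell)\xx^v \in I[\ZZ^J]$. Multiplying the first by $\xx^v$ and the second by $\xx^u$ and subtracting cancels the $\xx^{u+v+\ell}$ terms and leaves
\[
\bigl(\sigma_v(\ell) - \sigma_u(\ell)\bigr)\,\xx^{u+v} \in I[\ZZ^J].
\]
Hence $\sigma_u(\ell) = \sigma_v(\ell)$ for every $\ell \in L$ as soon as $\xx^{u+v} \notin I[\ZZ^J]$, that is, as soon as $u+v$ avoids the monomial connected component of $G_{I[\ZZ^J]}$.

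The main obstacle is therefore to justify this non-vanishing from the mesoprimary hypothesis. The connected component $C_{u+v}$ of $u+v$ has $\ZZ^J$-stabilizer containing $L$ by the relations above, so whenever $C_{u+v}$ is $J$-bounded, uniqueness of the potentially associated lattice pins its stabilizer to $L$ and forces $C_{u+v}$ to be non-monomial. To handle the remaining case---where $C_{u+v}$ might fail to be $J$-bounded, or $u+v$ might fall into the monomial class outright---I would exploit the freedom to replace $u$ and $v$ by translates $u+w$ and $v+w'$ with $w,w' \in \NN^\oJ$, which remain witnesses for $L$ with unchanged characters, since
\[
\xx^{u+w+\ell} \,=\, \xx^w\xx^{u+\ell} \,\equiv\, \sigma_u(\ell)\,\xx^{u+w} \pmod{I[\ZZ^J]}.
\]
Using the finiteness of $\NN^\oJ$-orbits of $J$-bounded components in a mesoprimary ideal---a structural feature analogous to Theorem~\ref{t:monPrimary}(2) and developed in~\cite{mesoprimary}---one can push $w$ and $w'$ deep enough along the $\oJ$-coordinate directions to guarantee that $(u+w)+(v+w')$ lands inside a $J$-bounded (hence non-monomial) component, completing the argument. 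Executing this placement rigorously is the delicate step where the full strength of the single-lattice hypothesis is actually consumed.
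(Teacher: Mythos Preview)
Your binomial manipulation is clean and is, up to a choice of endpoint, the same computation the paper is pointing at with the phrase ``witness incomparability''. But the step you flag as delicate does not merely need care---it fails as written, and the paper sidesteps the obstruction rather than pushing through it.

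Here is the comparison. Specialize your computation to the case where $u$ and $v$ are \emph{comparable} in~$\NN^n$, say $v = u + w$ with $w \in \NN^n$. Multiplying $\xx^{u+\ell} - \sigma_u(\ell)\xx^u \in I[\ZZ^J]$ by $\xx^w$ and subtracting the relation for~$v$ gives $(\sigma_u(\ell) - \sigma_v(\ell))\,\xx^v \in I[\ZZ^J]$. Now the obstruction vanishes for free: $v$ is a witness, so $\xx^v \notin I[\ZZ^J]$ is automatic. Thus comparable witnesses share a character, or contrapositively, distinct characters force incomparability. The paper then defers to~\cite{mesoprimary} for the structural fact that in a mesoprimary ideal all witnesses are linked through such comparabilities (this is one direction of the single-associated-mesoprime characterization).

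Your symmetric version, landing on $\xx^{u+v}$, genuinely runs into the monomial class---already $I = \<x_1^2\> \subseteq \kk[x_1]$ with $J = \nothing$ and $u = v = e_1$ gives $\xx^{u+v} \in I$---and the proposed translation repair goes the wrong way. Mesoprimary ideals are cellular, and the index set~$J$ attached to the unique potentially associated lattice is the set of nonzerodivisor variables; the variables $x_i$ for $i \in \oJ$ are \emph{nilpotent} modulo~$I$. Hence pushing $u$ by $w \in \NN^{\oJ}$ drives $\xx^{u+w}$ \emph{toward} the monomial class, and for $w$ large enough $u+w$ is no longer a witness at all. The finiteness you invoke from Theorem~\ref{t:monPrimary}(2) concerns orbits under the \emph{nonzerodivisor} face~$\NN^J$ (already inverted in $I[\ZZ^J]$), not under~$\NN^{\oJ}$; there is no ``finiteness of $\NN^{\oJ}$-orbits'' to appeal to. If anything, the useful move is the opposite---replace $u,v$ by witnesses \emph{below} them in the $\oJ$-directions---but making that precise is exactly the key-witness machinery developed in~\cite{mesoprimary} that the paper's proof invokes.
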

\begin{proof}
This follows from the same witness incomparability that appeared in
the proof of Proposition~\ref{p:witness}.  The statement is equivalent
to one direction of the characterization of mesoprimary binomial
ideals as those with precisely one associated mesoprime; see
\cite{mesoprimary}, where a complete proof can be found.
\end{proof}

\begin{thm}\label{t:mesodecomp}
Every binomial ideal $I \subseteq \kk[\xx]$ admits a mesoprimary
decomposition in which the unique associated lattice and witness
character of each mesoprimary component is potentially associated
to~$I$.
\end{thm}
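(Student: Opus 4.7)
The plan is to build one mesoprimary ideal containing~$I$ for each potentially associated triple $(J,L,\sigma)$ of~$I$ and then verify that the intersection of these finitely many ideals equals~$I$. Finiteness of the labeling set is guaranteed by Proposition~\ref{p:witness}, which supplies only finitely many potentially associated lattices $L \subseteq \ZZ^J$ and, for each one, finitely many witness characters $\sigma:L\to\kk^*$.

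For each such triple I would construct a candidate mesoprimary component $M_{J,L,\sigma} \supseteq I$ by adapting the recipe of Corollary~\ref{c:primDecomp} to the present characteristic-free setting. Concretely: adjoin to~$I$ the character ideal generated by the binomials $\xx^u - \sigma(u-v)\xx^v$ for $u-v \in L$ with $u,v \in \NN^J$; when $(J,L,\sigma)$ corresponds to an embedded witness, also adjoin a sufficiently high power of~$\mm_J$; then localize along~$\NN^J$; and finally collapse every non-$L$-bounded component of the resulting graph into a single monomial class. The output should be mesoprimary with unique potentially associated lattice~$L$ and witness character~$\sigma$ for exactly the reasons Theorem~\ref{t:monPrimary} gives the monomial analogue, lifted through the quotient $\kk[\xx]/I_\sigma$ as in Proposition~\ref{p:polyringPrimary'}. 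Because every step is phrased directly in terms of the combinatorics of~$\til_I$ and of the $\NN^J$-action on connected components, no hypothesis on~$\kk$ is needed.

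The containment $I \subseteq \bigcap_{(J,L,\sigma)} M_{J,L,\sigma}$ is built in. For the reverse inclusion, I would argue class-by-class under~$\til_I$: any binomial $\xx^u - \lambda\xx^v$ lying in the intersection but not in~$I$ records an incompatibility between~$u$ and~$v$ in the graph~$G_I$ (either disjoint components, or the same component carrying a different scalar). By Definition~\ref{d:L} and Proposition~\ref{p:witness}, any such incompatibility can be traced to a witness for some potentially associated $(J',L',\sigma')$; by construction that witness survives in $G_{M_{J',L',\sigma'}}$, forcing $\xx^u-\lambda\xx^v \notin M_{J',L',\sigma'}$, a contradiction.

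The main obstacle will be the embedded case. There, the auxiliary high power of~$\mm_J$ used to build $M_{J,L,\sigma}$ introduces monomials not in~$I$, so matching the intersection back to~$I$ demands that every such spurious monomial already lie outside some other mesoprimary component, one that is indexed by a strictly smaller potentially associated datum. Arranging this calls for an induction along the inclusion order on potentially associated lattices, using the incomparability of witnesses with distinct characters from Proposition~\ref{p:witness} together with noetherianity of congruences on finitely generated commutative monoids; this is the combinatorial heart of the construction carried out in full in~\cite{mesoprimary}.
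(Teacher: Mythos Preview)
The paper's own ``proof'' of this theorem is simply a one-line citation to \cite{mesoprimary}; no argument is given in the paper itself. Your proposal goes further by sketching a plausible roadmap for how such a construction might proceed, and you correctly identify that the embedded case is where the real work lies and is deferred to \cite{mesoprimary}.

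That said, your sketch has two genuine gaps that you should be aware of. First, you invoke Proposition~\ref{p:polyringPrimary'} to lift Theorem~\ref{t:monPrimary} and conclude that your $M_{J,L,\sigma}$ is mesoprimary, but that proposition requires $\kk$ algebraically closed of characteristic~$0$ (it rests on Theorem~\ref{t:algbinom}). Your claim that ``no hypothesis on~$\kk$ is needed'' because everything is phrased combinatorially is exactly the hard content of the theorem: one must redo the primary-component construction of Corollary~\ref{c:primDecomp} in a way that genuinely avoids Theorem~\ref{t:algbinom}, working purely with congruences rather than with ideals containing~$I_\rho$. This is not a rephrasing but a different argument.

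Second, your reverse-inclusion argument is too loose. You assert that any incompatibility in~$G_I$ between $u$ and~$v$ ``can be traced to a witness'' for some potentially associated datum, but Definition~\ref{d:L} and Proposition~\ref{p:witness} do not supply this directly: a witness is an element of a $J$-bounded component with a particular stabilizer, whereas an arbitrary pair $u,v$ in distinct components of~$G_I$ need not involve any $J$-bounded component at all. Establishing that every such incompatibility is detected by \emph{some} mesoprimary component in your list is precisely the content of the existence theorem in \cite{mesoprimary}, not a consequence of the definitions alone.
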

\begin{proof}
This is a weakened form of the existence theorem for combinatorial
mesoprimary decomposition
in \cite{mesoprimary}.
\end{proof}

The power of Theorem~\ref{t:mesodecomp} lies in the crucial conceit
that the combinatorics of the graph~$G_I$ controls everything, so the
lattices associated to the mesoprimary components are severely
restricted.  Over an algebraically closed field of characteristic~$0$,
for instance, every primary decomposition is a mesoprimary
decomposition, but usually the lattices are not associated to~$I$.
This is the case for a lattice ideal~$I_L$, as long as the lattice~$L$
is not saturated: the ideal~$I_L$ is already mesoprimary, but the
associated lattice of every associated prime is the
\emph{saturation}~$L_\sat = (L \otimes_\ZZ \QQ) \cap \ZZ^n$, the
smallest saturated sublattice of~$\ZZ^n$ containing~$L$.  In general,
the combinatorial control is what allows Theorem~\ref{t:mesodecomp} to
be devoid of hypotheses on the field.

As in any expression of an ideal~$I$ as an intersection of larger
ideals, information about associated primes of~$I$ can just as well be
read off of the intersectands.  For mesoprimary decompositions this is
especially effective because primary decomposition of mesoprimary
ideals
\cite{mesoprimary} is essentially as simple as that of lattice ideals
\cite[Corollary~2.5]{binomialIdeals}.  In particular, when the field
is algebraically closed, potentially associated lattices yield
associated primes by way of saturation.  The point is that only
finitely many characters $L_\sat \to \kk^*$ restrict to a given fixed
character $L \to \kk^*$.  In fact, when $\kk$ is algebraically closed,
these characters are in bijection with the finite group
$\Hom(L_\sat/L,\kk^*)$.  Thus Theorem~\ref{t:mesodecomp} reduces the
search for associated primes of~$I$ to the combinatorics of the
graph~$G_I$, along with a minimal amount of arithmetic.

\begin{cor}\label{c:mesoprime}
If the field\/ $\kk$ is algebraically closed, then every associated
prime of\/ $I \subseteq \kk[\xx]$ is $I_{\rho,J}$ for some character
$\rho: L_\sat \to \kk^*$ whose restriction to~$L$ is one of the
finitely many witness characters defined on a potentially associated
lattice $L \subseteq \ZZ^J\!$~of\/~$I$.
\end{cor}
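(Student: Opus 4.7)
The plan is to chain Theorem~\ref{t:es}, Theorem~\ref{t:mesodecomp}, and the classical fact that associated primes of an intersection are found among the associated primes of the intersectands, then reduce to the special case of a single mesoprimary ideal.

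First I would invoke Theorem~\ref{t:es} together with Corollary~\ref{c:prime}: over the algebraically closed field~$\kk$, every associated prime $\pp$ of~$I$ already has the form $\pp = I_{\rho,J}$ for some character $\rho$ defined on a saturated sublattice of~$\ZZ^J$. So the issue is only to identify \emph{which} saturated pairs $(L_\sat,\rho)$, with $L_\sat \subseteq \ZZ^J$, can occur, and to match them with witness characters on potentially associated lattices of~$I$.

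Next I would apply Theorem~\ref{t:mesodecomp} to write $I = M_1 \cap \cdots \cap M_s$, where each mesoprimary component~$M_i$ has a unique potentially associated lattice $L_i \subseteq \ZZ^{J_i}$ and a unique witness character $\sigma_i : L_i \to \kk^*$, both potentially associated to~$I$. Because $\mathrm{Ass}(\kk[\xx]/I) \subseteq \bigcup_i \mathrm{Ass}(\kk[\xx]/M_i)$, it suffices to show that every associated prime of a mesoprimary ideal $M$ with data $(L,J,\sigma)$ is of the form $I_{\rho,J}$ for some character $\rho : L_\sat \to \kk^*$ extending $\sigma$, and that there are only finitely many such extensions.

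This reduction lands us in the setting where the combinatorics is forced to be as rigid as for a lattice ideal, and that is where I expect the main obstacle: one must know that a mesoprimary ideal behaves, as far as associated primes are concerned, exactly like the lattice ideal~$I_\sigma$ inside the appropriate affine semigroup ring, so that its associated primes correspond bijectively to extensions of~$\sigma$ to the saturation~$L_\sat$. The cited analysis of mesoprimary congruences in \cite{mesoprimary} provides this, in analogy with \cite[Corollary~2.5]{binomialIdeals} for lattice ideals: associated primes of $M$ are exactly the ideals $I_{\rho,J}$ where $\rho|_L = \sigma$. Once this is granted, the count is finite because $\Hom(L_\sat/L,\kk^*)$ is a finite group (here algebraic closure of~$\kk$ ensures surjectivity of $\Hom(L_\sat,\kk^*) \to \Hom(L,\kk^*)$ and finiteness of the fibers).

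Finally I would assemble the global finiteness statement: Proposition~\ref{p:witness} gives finitely many potentially associated lattices $L \subseteq \ZZ^J$ and, for each, finitely many witness characters; the previous paragraph supplies, for each witness character, finitely many saturated extensions~$\rho$. The union of the resulting primes $I_{\rho,J}$ contains every $\mathrm{Ass}(\kk[\xx]/M_i)$, hence contains $\mathrm{Ass}(\kk[\xx]/I)$, which is exactly the conclusion of the corollary.
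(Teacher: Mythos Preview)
Your proposal is correct and follows essentially the same route as the paper: apply Theorem~\ref{t:mesodecomp}, use the containment $\mathrm{Ass}(\kk[\xx]/I) \subseteq \bigcup_i \mathrm{Ass}(\kk[\xx]/M_i)$, and then invoke the primary decomposition of mesoprimary ideals from \cite{mesoprimary} to identify the associated primes of each~$M_i$ as the $I_{\rho,J}$ with $\rho|_L = \sigma$. The paper's proof is terser and also mentions an alternative for the last step---namely, that mesoprimary ideals are cellular, so one may instead cite the witness theorem for cellular ideals \cite[Theorem~8.1]{binomialIdeals}---but otherwise the argument is the same; your opening appeal to Theorem~\ref{t:es} and Corollary~\ref{c:prime} is harmless but not needed, since the form of the associated primes falls out of the mesoprimary analysis anyway.
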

\begin{proof}
Every associated prime of~$I$ is associated to a mesoprime in the
decomposition from Theorem~\ref{t:mesodecomp}.  Now apply either the
primary decomposition of mesoprimary ideals
\cite{mesoprimary} or the witness theorem for cellular ideals
\cite[Theorem~8.1]{binomialIdeals}, using the fact that mesoprimary
ideals are cellular
\cite{mesoprimary}.
\end{proof}

\begin{remark}
In the special case where $I$ is \emph{cellular}, meaning that every
variable is either nilpotent or a nonzerodivisor modulo~$I$,
Corollary~\ref{c:mesoprime} coincides with
\cite[Theorem~8.1]{binomialIdeals}.  Every binomial ideal in any
polynomial ring over any field is an intersection of cellular ideals
\cite[Theorem~6.2]{binomialIdeals}, with at most one cellular
component for each subset $J \subseteq \{1,\ldots,n\}$, so it suffices
for many purposes to understand the combinatorics of cellular ideals.
(Theorem~\ref{t:mesodecomp} strengthens this approach, since
mesoprimary ideals are cellular and their combinatorics is
substantially simpler.)  The way witnesses and witness characters are
defined above, however, it is not quite obvious that the information
extracted from witnesses for the original ideal~$I$ and those for its
cellular components coincides.  That this is indeed the case
constitutes a key ingredient proved in preparation for the existence
theorem for combinatorial mesoprimary decomposition in
\cite{mesoprimary}.
\end{remark}

\begin{exercise}
The ideal $I = \<xz-yz,x^2-x^3\> \subseteq \kk[x,y,z]$ from
Example~\ref{e:localize} has primary decomposition $I =
\<x-y,x^2,xy,y^2\> \cap \<x-1,y-1\> \cap \<x^2, z\> \cap \<x-1, z\>$.
The reader is invited to find all associated lattices $L \subseteq
\ZZ^J$ of~$I$ and match them to the associated primes of~$I$.  Then
the reader can verify, using Corollary~\ref{c:primDecomp}, that the
given primary decomposition of~$I$ really is one.  Hint: take $J \in
\big\{\{3\}, \{1,2,3\}, \{2\}, \{1,2\}\big\}$.
\end{exercise}

The upshot of Sections~\ref{s:affine}--\ref{s:decomp} is that the
lattice-point combinatorics of congruences on monoids lifts to
combinatorics of monomial and binomial primary and mesoprimary
decompositions of binomial ideals in monoid algebras.  {}From there,
binomial primary decomposition is a small arithmetic step, having to
do with group characters for finitely generated abelian groups.

\part{Applications}\vspace{-1ex}

\section{Hypergeometric series}\label{s:horn}

The idea for lattice-point methods in binomial primary decomposition
originated in the study of hypergeometric systems of differential
equations, particularly their series solutions.  The literature on
these systems and series is so vast---owing to its connections with
physics, numerical analysis, combinatorics, probability, number
theory, complex analysis, and algebraic geometry---that one section in
a survey lacks the ability to lend proper perspective.  Therefore, the
goal of this section is to make a beeline for the connections to
binomial primary decomposition, with just enough background along the
way to allow the motivations and conclusions to shine through.  Much
of the exposition is borrowed from \cite{dmm-mega,dmm}, sometimes
nearly verbatim.  The extended abstract \cite{dmm-mega} presents a
broader, more complete historical~overview.

\subsection{Binomial Horn systems}\label{s:horn'}

Horn systems are certain sets of linear partial differential equations
with polynomial coefficients.  Their development grew out of the
ordinary univariate hypergeometric theory going back to Gauss (see
\cite{sk85}, for example) and Kummer \cite{kummer}, through the
bivariate versions of Appell, Horn, and Mellin
\cite{appell,horn89,horn31,mellin}.  These formulations had no
apparent connection to binomials, but through a relatively simple
change of variables, Gelfand, Graev, Kapranov, and Zelevinsky brought
binomials naturally into the picture \cite{ggz,gkz}.

The data required to write down a binomial Horn system consist of a
basis for a sublattice $L \subseteq \ZZ^n$ and a
homomorphism $\beta: \ZZ^n/L \to \CC$.  Focus first on the basis,
which is traditionally arranged in an integer $n \times m$ matrix~$B$,
where $m = \mathrm{rank}(L)$.  If $\bb \in \ZZ^n$ is a column of~$B$,
then $\bb$ determines a binomial:
$$%
  \bb \in \ZZ^n \ \goesto\ \del^{\bb_+} - \del^{\bb_-} \in \CC[\del] =
  \CC[\del_1,\ldots,\del_n],
$$
where $\bb = \bb_+ - \bb_-$ expresses the vector $\bb$ as a difference of
nonnegative vectors with disjoint support.  Elements of the polynomial
ring $\CC[\del]$ are to be viewed as differential operators on
functions $\CC^n \to \CC$.  Therefore the matrix~$B$ determines a
system of $m$ binomial differential operators, one for each column.
The interest is a~priori in solutions to differential systems, not
really the systems themselves, so it is just as well
$$%
  I(B) = \<\del^{\bb_+} - \del^{\bb_-} \mid \bb = \bb_+ - \bb_- \text{ is a
  column of } B\> \subseteq \CC[\del]
$$
generated by these binomials, because any function annihilated by the
$m$ binomials is annihilated by all of~$I(B)$.

\begin{example}\label{e:0123B}
In the 0123 situation from Examples~\ref{e:0123} and~\ref{e:0123'},
using variables $\del = \del_1,\del_2,\del_3,\del_4$ instead of
$a,b,c,d$ or $x_1,x_2,x_3,x_4$ yields $I(B) = \<\del_1\del_3 -
\del_2^2, \del_2\del_4 - \del_3^2\>$.
\end{example}

\begin{example}\label{e:1100B}
In the \1100 situation from Examples~\ref{e:1100} and~\ref{e:1100'}
with $\del$ variables, $I(B) = \<\del_1\del_3 - \del_2, \del_1\del_4 -
\del_2\>$.
\end{example}

The set of homomorphisms $\ZZ^n/L \to \CC$ is a complex vector space
$\Hom(\ZZ^n/L,\CC)$ of dimension $d := n-m$.  Choosing a basis for
this vector space is the same as choosing a basis for $(\ZZ^n/L)
\otimes_\ZZ \CC$, which is the same as choosing a $d \times n$
matrix~$A$ with $AB = 0$.  Let us now, once and for all, fix such a
matrix~$A$ with entries $a_{ij}$ for $i = 1,\ldots,d$ and $j =
1,\ldots,n$.  The situation is therefore just as it was in
Examples~\ref{e:0123'} and~\ref{e:1100'}, and our homomorphism
$\ZZ^n/L \to \CC$ becomes identified with a complex vector $\beta \in
\CC^d$.  Together, $A$ and~$\beta$ determine $d$ differential
operators $E_1 - \beta_1, \ldots, E_d - \beta_d$, where
$$%
  E_i = a_{i1} x_1\del_1 + \cdots + a_{in}x_n\del_n.
$$
Note that $a_{ij}x_j\del_j$ is the operator on functions
$f(x_1,\ldots,x_n): \CC^n \to \CC$ that takes the partial derivative
with respect to~$x_j$ and multiplies the resulting function
by~$a_{ij}x_j$.

\begin{defn}\label{d:horn}
The \emph{binomial Horn system} $H(B,\beta)$ is the system
\begin{align*}
  I(B)f &= 0
\\
  E_1 f &= \beta_1 f
\\
        &\ \,\vdots
\\
  E_d f &= \beta_d f
\end{align*}
of differential equations on functions $f(\xx): \CC^n \to \CC$
determined by the \emph{lattice basis ideal} $I(B)$ and the
\emph{Euler operators} $E_1 - \beta_1, \ldots, E_d - \beta_d$.
\end{defn}

The goal is to find, characterize, or otherwise understand the
solutions to~%
$H(B,\beta)$.

\begin{example}\label{e:0123B'}
In the 0123 case from Example~\ref{e:0123'}, $H(B,\beta)$ has lattice
basis part
\begin{align*}
  I(B)f = 0 \iff\mbox{}& (\del_1\del_3 - \del_2^2)f = 0
\\          \text{and }& (\del_2\del_4 - \del_3^2)f = 0
\end{align*}
and the Euler operators yield the following equations:
\begin{align*}
  (x_1\del_1 + x_2\del_2 + \phantom{2}x_3\del_3 + \phantom{3}x_4\del_4) f &= \beta_1 f
\\(\phantom{x_1\del_1 + \mbox{}}x_2\del_2 + 2x_3\del_3 + 3x_4\del_4) f &= \beta_2 f.
\end{align*}
\end{example}

\begin{example}\label{e:1100B'}
In the \1100 case from Example~\ref{e:1100B}, $H(B,\beta)$ has lattice
basis part
\begin{align*}
  I(B)f = 0 \iff\mbox{}& (\del_1\del_3 - \del_2)f = 0
\\          \text{and }& (\del_1\del_4 - \del_2)f = 0
\end{align*}
and the Euler operators yield the following equations:
\begin{align*}
  (x_1\del_1 + x_2\del_2 \phantom{\mbox{}+ x_3\del_3 + x_4\del_4}) f &= \beta_1 f
\\(\phantom{x_1\del_1 + \mbox{}}x_2\del_2 + x_3\del_3 + x_4\del_4) f &= \beta_2 f.
\end{align*}
\end{example}

Since Horn systems are linear, their solution spaces are complex
vector spaces.  More precisely, the term \emph{solution space} in what
follows means the vector space of local holomorphic solutions defined
in a neighborhood of a (fixed, but arbitrary) point in~$\CC^n$ that is
nonsingular for the Horn system.

\begin{example}\label{e:puiseux}
In the 0123 case from Example~\ref{e:0123B'}, for any parameter
vector~$\beta$, the Puiseux monomial $f =
x_1^{\beta_1/3}x_4^{\beta_2/3}$ is a solution of~$H(B,\beta)$.
Indeed,
$$%
  \del_1\del_3(f) =
  \del_2(f) =
  \del_2\del_4(f) =
  \del_3^2(f) = 0,
$$
so $I(B)f = 0$, and $(E_1 - \beta_1)f = (E_2 - \beta_2) f = 0$ because
\begin{align*}
  x_1\del_1(f) &= (\beta_1 - {\textstyle\frac13}\beta_2)f
\\x_2\del_2(f) &= 0
\\x_3\del_3(f) &= 0
\\x_4\del_4(f) &= {\textstyle\frac13}\beta_2 f.
\end{align*}
Erd\'elyi produced this solution and similar ones in other examples
\cite{erdelyi}, but he furnished no explanation for why it should
exist or how he found~it.  In this particular example, the Horn system
has, in addition to the Puiseux monomial~$f$, three linearly
independent fully supported solutions, in the following sense.
\end{example}

\begin{defn}
A Puiseux series solution $f$ to a Horn system $H(B,\beta)$ is
\emph{fully supported} if there is a
normal affine semigroup~$Q$ of dimension~$m$ and a vector $\gamma \in
\CC^n$ such that the translate
$\gamma + Q$ consists of vectors that are exponents on monomials with
nonzero coefficient in~$f$.
\end{defn}

The integer $m = \mathrm{rank}(L)$ in the definition is the maximum
possible: the Euler operator equations impose homogeneity on Puiseux
series solutions, meaning that every solution must be supported on a
translate of $L \otimes_\ZZ \CC$.  In fact, the translate is by any
vector $\gamma \in \CC^n$ satisfying $A\gamma = \beta$.

\begin{questions}\label{q}
Consider the family of Horn systems determined by~$B$ with
varying~$\beta$.
\begin{enumerate}
\item\label{q:finite-rank}%
For which parameters~$\beta$ does $H(B,\beta)$ have finite-dimensional
solution space?

\item\label{q:rank}%
What is a combinatorial formula for the minimum solution space
dimension, over all possible choices of the parameter~$\beta$?

\item\label{q:generic}%
Which
$\beta$ are generic in the sense that the minimum dimension is
attained?

\item\label{q:support}%
Which monomials occur in solutions expanded as
series centered at the origin?
\end{enumerate}
\end{questions}

These
questions arise from classical work done in the 1950s, such as
Erd\'elyi's, and earlier.  Implicit in Question~\ref{q:generic} is
that the dimension of the solution space rises above the minimum for
only a ``small'' subset of parameters~$\beta$.

\begin{example}\label{e:uncountable}
In the \1100 case from Example~\ref{e:1100B'}, if $\beta_1 = 0$, then
any (local holomorphic) bivariate function $f(x_3,x_4)$ satisfying
$x_3\del_3 f + x_4\del_4 f = \beta_2 f$ is a solution of the Horn
system~$H(B,\beta)$.  The space of such functions is
infinite-dimensional; in fact, it has uncountable dimension, as it
contains all Puiseux monomials $x_3^{w_3} x_4^{w_4}$ with $w_3,w_4 \in
\CC$ and $w_3+w_4 = \beta_2$.  When $\beta_1 \neq 0$, the solution
space has finite dimension.
\end{example}

The \1100 example has vast numbers of linearly independent solutions
expressible as Puiseux series with small support, but only for special
values of~$\beta$.  In contrast,
in the 0123 case there are many fewer series solutions of small
support, but they appear for arbitrary values of~$\beta$.  This
dichotomy is central to the interactions of Horn systems with binomial
primary decomposition.

\subsection{True degrees and quasidegrees of graded modules}\label{s:degrees}

The commutative algebraic
version of the
dichotomy just mentioned arises from elementary (un)boundedness
of Hilbert functions of $A$-graded modules (recall
Definition~\ref{d:A-graded}, Example~\ref{e:A-graded}, and
Lemma~\ref{l:hilb}); see Definition~\ref{d:toralAndean}.  For the
remainder of this section, fix a matrix $A \in \ZZ^{d \times n}$ of
rank~$d$ whose affine semigroup $\NN A \subseteq \ZZ^d$ is pointed
(Definition~\ref{d:ruleset}).

\begin{lemma}
A binomial ideal $I \subseteq \CC[\del]$ is $A$-graded if and only if
it is generated by binomials $\xx^\uu - \lambda\xx^\vv$ for which
$A\uu = A\vv$.\qed
\end{lemma}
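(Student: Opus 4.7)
The plan is to prove the two implications directly, using only the definition of $A$-graded ideal and the standard fact that in any $A$-graded ring the homogeneous components of an element of an $A$-graded ideal again lie in that ideal.

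For the ``if'' direction, observe that a binomial $\xx^\uu - \lambda\xx^\vv$ with $A\uu = A\vv$ is itself $A$-homogeneous of degree $A\uu$, since both of its terms sit in the component $\CC[\del]_{A\uu}$. Monomials (the case $\lambda = 0$) are likewise $A$-homogeneous. Hence an ideal generated by such binomials admits a generating set of $A$-homogeneous elements, and therefore is $A$-graded by Definition~\ref{d:A-graded}.

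For the ``only if'' direction, suppose $I$ is $A$-graded and binomial. Choose any binomial generating set $\{f_i\}$ with $f_i = \xx^{\uu_i} - \lambda_i \xx^{\vv_i}$. For each $i$, split into cases. If $\lambda_i = 0$, or if $\lambda_i \neq 0$ and $A\uu_i = A\vv_i$, then $f_i$ already has the required form. Otherwise $\lambda_i \neq 0$ and $A\uu_i \neq A\vv_i$, in which case the $A$-homogeneous decomposition of $f_i$ consists of the two monomials $\xx^{\uu_i}$ and $-\lambda_i \xx^{\vv_i}$, lying in distinct components $\CC[\del]_{A\uu_i}$ and $\CC[\del]_{A\vv_i}$. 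Because $I$ is $A$-graded, both of these homogeneous components belong to $I$. Replacing $f_i$ by the pair $\xx^{\uu_i}, \xx^{\vv_i}$ (each a monomial, hence trivially a binomial of the required form with $\lambda = 0$) produces, after running over all $i$, a binomial generating set of $I$ of the stated type.

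There is essentially no obstacle here beyond invoking the standard fact that the homogeneous components of any element of an $A$-graded ideal lie in the ideal; the argument then boils down to noting that a binomial has at most two $A$-homogeneous components, each of which is either the binomial itself (when the two terms share an $A$-degree) or a monomial (when they do not).
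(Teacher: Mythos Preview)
Your argument is correct and is exactly the standard verification one would give; the paper omits the proof entirely (note the~$\qed$ on the statement), treating the lemma as immediate from the definitions. There is nothing to compare: you have simply written out the details that the paper leaves to the reader.
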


It is of course not necessary---and it almost never happens---that
every binomial of the form $\xx^\uu - \lambda\xx^\vv$ with $A\uu =
A\vv$ lies in~$I$.

\begin{example}
$I = I(B)$ is always $A$-graded when $B$ is a matrix for~$\ker(A)$.
\end{example}

The set of degrees where a graded module is nonzero should, for the
purposes of the applications to Horn systems, be considered
geometrically.

\begin{defn}
For any $A$-graded module~$M$,
$$%
  \tdeg(M) = \{\alpha \in \ZZ^d \mid M_\alpha \neq 0\}
$$
is the set of \emph{true degrees} of~$M$.  The set of
\emph{quasidegrees} of~$M$ is
$$%
  \qdeg(M) = \ol{\tdeg(M)},
$$
the Zariski closure in~$\CC^d$ of the true degree set of~$M$.
\end{defn}

The Zariski closure here warrants some discussion.  By definition, the
\emph{Zariski closure} of a subset $T \subseteq \CC^d$ is the largest
set $\oT$ of points in~$\CC^d$ such that every polynomial vanishing
on~$T$ also vanishes on~$\oT$.  All of the sets $T$ that we shall be
interested in are sets of lattice points in $\ZZ^d \subseteq \CC^d$.
When $T$ consists of lattice points on a line, for example, its
Zariski closure~$\oT$ is the whole line precisely when $T$ is
infinite.  When $T$ is contained in a plane, its Zariski closure is
the whole plane only if $T$ is not contained in any algebraic curve in
the plane.  In the cases that interest us, $\oT$~will always be a
finite union of translates of linear subspaes of~$\CC^d$.

\begin{lemma}\label{l:Z}
If $M$ is a finitely generated $A$-graded module over~$\CC[\del]$,
then $\qdeg(M)$ is a finite arrangement of affine subspaces
of~$\CC^d$, each one parallel to~$\ZZ A_J$ for some $J \subseteq
\{1,\ldots,n\}$, where $A_J$ is the submatrix of~$A$ comprising the
columns indexed by~$J$.
\end{lemma}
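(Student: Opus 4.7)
The plan is to reduce to the case where $M = \CC[\del]/\pp$ for an $A$-graded prime~$\pp$ via a standard prime filtration, classify such primes using Corollary~\ref{c:prime}, and then compute their quasidegrees directly.

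First I would invoke the existence, for any finitely generated $A$-graded module~$M$ over the Noetherian $A$-graded ring~$\CC[\del]$, of a finite chain
$$
  0 = M_0 \subsetneq M_1 \subsetneq \cdots \subsetneq M_k = M
$$
of $A$-graded submodules such that each quotient $M_i/M_{i-1}$ is isomorphic, as an $A$-graded module, to a degree shift $(\CC[\del]/\pp_i)(-\alpha_i)$ for some $A$-graded prime $\pp_i \subseteq \CC[\del]$ and some $\alpha_i \in \ZZ A$.  This is the usual filtration built by choosing, at each stage, an associated prime of $M/M_{i-1}$ (which is $A$-graded because the module is).  Since $\tdeg$ is subadditive across short exact sequences,
$$
  \tdeg(M) \,\subseteq\, \bigcup_{i=1}^k \bigl(\alpha_i + \tdeg(\CC[\del]/\pp_i)\bigr),
$$
and taking Zariski closures reduces the lemma to showing that $\qdeg(\CC[\del]/\pp)$ is an affine subspace of the desired form for each $A$-graded prime~$\pp$.

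Next I would classify the $A$-graded primes of $\CC[\del]$.  Since $\CC$ is algebraically closed of characteristic~$0$, Corollary~\ref{c:prime} identifies each such prime with $I_{\rho,J}$ for some $J \subseteq \{1,\ldots,n\}$, some saturated sublattice $L \subseteq \ZZ^J$, and some character $\rho: L \to \CC^*$; that $I_{\rho,J}$ is $A$-graded forces $L \subseteq \ker(A_J)$.  Because $I_{\rho,J}$ contains $\mm_J$, the nonzero classes modulo $I_{\rho,J}$ are represented by monomials $\xx^\uu$ with $\uu \in \NN^J$, and Lemma~\ref{l:hilb} (applied to the induced congruence) shows that each such class is indeed nonzero.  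Therefore
$$
  \tdeg(\CC[\del]/I_{\rho,J}) \;=\; \{A_J\uu \mid \uu \in \NN^J\} \;=\; \NN A_J.
$$

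The last step is to identify the Zariski closure $\ol{\NN A_J} \subseteq \CC^d$.  Let $V = \CC A_J$ denote the $\CC$-linear span of the columns of $A_J$, so that $V$ is the smallest affine (in fact linear) subspace of $\CC^d$ containing $\NN A_J$, and $V$ is parallel to the lattice $\ZZ A_J$ by definition.  A polynomial on $\CC^d$ that vanishes on $\NN A_J$ restricts to a polynomial on $V$ vanishing on the image of the coordinatewise-nonnegative orthant under a surjective linear map; since $\NN$ is Zariski-dense in~$\CC$, an inductive argument in the rank of $A_J$ (or a direct application of the fact that any nonzero polynomial in $r = \dim V$ variables has a nonzero value at some lattice point in $\NN^r$ large enough in each coordinate) shows this restriction vanishes identically on $V$.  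Hence $\qdeg(\CC[\del]/I_{\rho,J}) = \CC A_J$, and combining with the filtration bound yields
$$
  \qdeg(M) \;\subseteq\; \bigcup_{i=1}^k \bigl(\alpha_i + \CC A_{J_i}\bigr),
$$
a finite arrangement of affine subspaces parallel to the lattices $\ZZ A_{J_i}$.  The only mildly delicate step is the Zariski-density claim for $\NN A_J$ in $\CC A_J$; everything else is formal once the prime filtration and Corollary~\ref{c:prime} are in hand.
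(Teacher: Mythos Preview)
Your filtration strategy matches the paper's, but there are two real issues.

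First, Corollary~\ref{c:prime} classifies \emph{binomial} primes, not arbitrary $A$-graded primes.  The prime filtration of a general $A$-graded module~$M$ produces $A$-graded primes~$\pp_i$ that need not be binomial (think of the total-degree grading $A = [1\ 1\ 1]$, where every homogeneous prime is $A$-graded).  So the step ``Corollary~\ref{c:prime} identifies each such prime with $I_{\rho,J}$'' is unjustified.  The paper avoids this entirely: for any $A$-graded prime~$\pp$, set $J = \{j \mid \del_j \notin \pp\}$; since $\CC[\del]/\pp$ is a domain, every monomial $\del^\uu$ with $\uu \in \NN^J$ is nonzero modulo~$\pp$, while every monomial involving some $\del_i$ with $i \notin J$ lies in~$\pp$.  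Hence $\tdeg(\CC[\del]/\pp) = \NN A_J$ directly, with no need to classify~$\pp$.  Your computation of $\tdeg$ happens to give the right answer, but the route through Corollary~\ref{c:prime} does not support it.

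Second, you only conclude the inclusion $\qdeg(M) \subseteq \bigcup_i(\alpha_i + \CC A_{J_i})$, which does not yet show that $\qdeg(M)$ is itself an affine subspace arrangement (a closed subset of an arrangement could be a curve in one of the flats).  In fact $\tdeg$ is \emph{additive} across graded short exact sequences---Hilbert functions add---so $\tdeg(M) = \bigcup_i(\alpha_i + \tdeg(\CC[\del]/\pp_i))$ with equality.  Taking closures (and using that closure commutes with finite unions) then gives $\qdeg(M) = \bigcup_i(\alpha_i + \CC A_{J_i})$ exactly, which is what the lemma asserts.  Replace ``subadditive'' by ``additive'' and track equality throughout.
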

\begin{proof}
This is \cite[Lemma~2.5]{dmm}.  Since $M$ is noetherian, it has a
finite filtration whose successive quotients are $A$-graded translates
of~$\CC[\del]/\pp$ for various $A$-graded primes~$\pp$.  Finiteness of
the filtration implies that $\qdeg(M)$ is the union of the quasidegree
sets of these $A$-graded translates.  But $\tdeg(\CC[\del]/\pp)$ is
the affine semigroup generated by $\{\deg(\del_i) \mid \del_i \notin
\pp\}$, because $\tdeg(\CC[\del]/\pp)$ is an integral domain.
\end{proof}

\begin{example}
If $\pp = I_L + \mm_J$ is a binomial prime ideal, then
$\qdeg(\CC[\del]/\pp) = \CC A_J$ is the complex vector subspace
of~$\CC^d$ spanned by the columns of~$A$ indexed by~$J$.
\end{example}

\begin{example}\label{e:tdeg}
In the \1100 case from Examples~\ref{e:1100} and~\ref{e:1100'},
consider the $A$-graded module $M =
\CC[\del_1,\del_2,\del_3,\del_4]/I$ for varying ideals~$I$.  Then the
true degree sets and quasidegree sets can be depicted as follows.
\begin{enumerate}
\item%
When $I = I_A = \<\del_1\del_3 - \del_2, \del_3 - \del_4\>$, so that
$M \cong \CC[\del_1,\del_3]$ by the homomorphism sending $\del_2
\mapsto \del_1\del_3$ and $\del_4 \mapsto \del_3$,
$$%
\psfrag{1}{}
\psfrag{d1}{\tiny\color{darkpurple}$\del_1$}
\psfrag{d3}{\tiny\color{darkpurple}$\del_3$}
\begin{array}{cc}
  \begin{array}{l}
  \\
  M = \CC[\del]/\<\del_1\del_3 - \del_2, \del_3 - \del_4\> \cong
  \CC[\del_1,\del_3]
  \\[1ex]
  {\color{darkpurple}\deg(\del_1) = \bigl[\twoline 10\bigr]} \text{ and }
  {\color{darkpurple}\deg(\del_3) = \bigl[\twoline 01\bigr]}
  \\[1ex]
  {\color{blue}\tdeg(M) = \NN\bigl\{\bigl[\twoline 10\bigr],
  \bigl[\twoline 01\bigr]\bigr\} = \NN A_{\{1,3\}}}
  \\[1ex]
  {\color{darkgreen} \qdeg(M) = \CC^2}
  \end{array}
  \ \ \longleftrightarrow\ \
&
  \begin{array}{c}\includegraphics{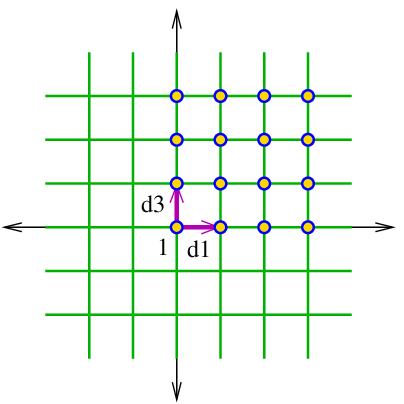}\end{array}
\\\mbox{}
\end{array}
$$

\item%
When $I = \<\del_1,\del_2\>$, so that $M \cong \CC[\del_3,\del_4]$ by
the homomorphism sending $\del_1 \mapsto 0$ and $\del_2 \mapsto 0$,
$$%
\psfrag{1}{}
\psfrag{d1}{}
\psfrag{d3}{\tiny\color{darkpurple}$\del_3,\del_4$}
\begin{array}{cc}
  \begin{array}{l}
  \\
  M = \CC[\del]/\<\del_1,\del_2\> \cong \CC[\del_3,\del_4]
  \\[1ex]
  {\color{darkpurple}\deg(\del_3) = \bigl[\twoline 01\bigr]} \text{ and }
  {\color{darkpurple}\deg(\del_4) = \bigl[\twoline 01\bigr]}
  \\[1ex]
  {\color{blue}\tdeg(M) = \NN\bigl\{\bigl[\twoline 01\bigr]\bigr\} =
  \NN A_{\{3,4\}}}
  \\[1ex]
  {\color{darkgreen} \qdeg(M) = \text{vertical axis in }\CC^2}
  \end{array}
  \ \ \ \longleftrightarrow\ \
&
  \begin{array}{c}\includegraphics{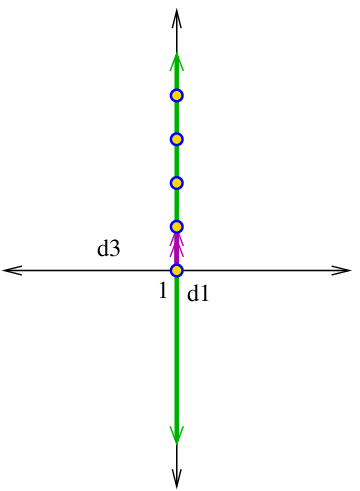}\end{array}
\\\mbox{}
\\[-2ex]\mbox{}
\end{array}
$$

\item%
When $I = \<\del_1^2,\del_2\>$, so that $M \cong \CC[\del_3,\del_4]
\oplus \del_1\CC[\del_3,\del_4]$,
$$%
\psfrag{1}{}
\psfrag{d1}{\tiny\color{darkpurple}$\del_1$}
\psfrag{d3}{\tiny\color{darkpurple}$\del_3,\del_4$}
\begin{array}{@{}cc@{}}
\\[-4ex]
  \begin{array}{@{}l}
  \\
  M = \CC[\del]/\<\del_1^2,\del_2\> \cong
  \CC[\del_3,\del_4] \oplus \del_1\CC[\del_3,\del_4]
  \\[1ex]
  {\color{darkpurple}\deg(\del_1) = \bigl[\twoline 10\bigr]} \text{ and }
  {\color{darkpurple}\deg(\del_3) = \bigl[\twoline 01\bigr]} \text{ and }
  {\color{darkpurple}\deg(\del_4) = \bigl[\twoline 01\bigr]}
  \\[1ex]
  {\color{blue}\tdeg(M) = \NN A_{\{3,4\}} \cup \bigl([\twoline
  10\bigr] + \NN A_{\{3,4\}}\bigr)}
  \\[1ex]
  {\color{darkgreen} \qdeg(M) = \text{vertical axis} \cup
  \bigl(\bigl[\twoline 10\bigr]+\text{vertical axis}\bigr)\text{ in }\CC^2}
  \end{array}
  \ \longleftrightarrow\ \
&
  \begin{array}{c@{}}\includegraphics{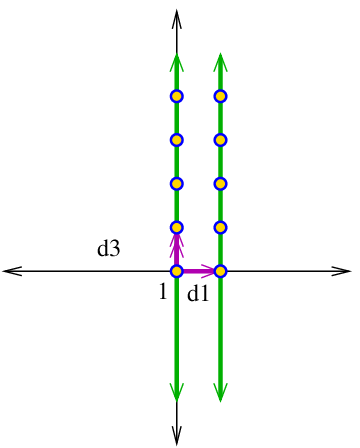}\end{array}
\end{array}
$$
\end{enumerate}
\end{example}

\begin{defn}\label{d:toralAndean}
An $A$-graded prime ideal $\pp \subseteq \CC[\del]$ is
\begin{itemize}
\item%
\emph{toral} if the Hilbert function $\uu \mapsto
\dim_\CC\,(\CC[\del]/\pp)_\uu$ is bounded for $\uu \in \ZZ^d$, and
\item%
\emph{Andean} if the Hilbert function is unbounded.
\end{itemize}
\end{defn}

The adjective ``Andean'' indicates that Andean $A$-graded components
sit like a high, thin mountain range on~$\ZZ^d$, of unbounded
elevation, over cosets of sublattices~$\ZZ A_J$.

\begin{example}\label{e:1100toralAndean}
Consider the situation from Example~\ref{e:tdeg}.
\begin{enumerate}
\item%
The prime ideal $I_A$ in Example~\ref{e:tdeg}.1 is toral because the
Hilbert function of~$\CC[\del]/I_A$ only takes the value~$1$ on the
true degrees.  By definition, the Hilbert function vanishes outside of
the true degree set.
\item%
The prime ideal $I = \<\del_1,\del_2\>$ in Example~\ref{e:tdeg}.2 is
Andean because the Hilbert function of~$\CC[\del]/I$ is unbounded:
$\bigl[\twoline 0k\bigr] \mapsto k$.
\end{enumerate}
\end{example}

\begin{example}\label{e:0123toral}
In the 0123 situation from Examples~\ref{e:0123} and~\ref{e:0123'},
the ideal $I_A = \<\del_1\del_3 - \del_2^2, \del_2\del_4 - \del_3^2,
\del_1\del_4 - \del_2\del_3\>$ is toral under the $A$-grading.
Indeed, for any matrix~$A$ the toric ideal $I_A$ is toral under the
$A$-grading by Lemma~\ref{l:hilb}: the $Q$-graded Hilbert function of
a monoid algebra~$\kk[Q]$ takes the constant value~$1$ and is hence
bounded.
\end{example}

\begin{thm}\label{t:toralAndean}
Fix an $A$-graded ideal $I \subseteq \CC[\del]$.  Given that $\NN A$
is pointed, every associated prime of~$I$ is $A$-graded, and $I$
admits a decomposition as an intersection of $A$-graded primary
ideals.  The intersection $I_\andean$ of the primary components of~$I$
with Andean associated primes is well-defined.
\end{thm}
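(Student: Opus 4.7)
The plan is to verify each assertion in sequence, using pointedness of~$\NN A$ throughout to fix a total order on~$\ZZ A$ that compatibly extends positivity of $\NN A \minus \{0\}$; such an order exists because the pointed rational cone $\RR_{\geq 0}(\NN A)$ admits a strictly positive linear functional, which pulls back to give a compatible term order on~$\ZZ A$. For the $A$-gradedness of each associated prime $\pp = \mathrm{Ann}(\ol f) \in \mathrm{Ass}(\CC[\del]/I)$, the plan is the standard filtration argument: given $g \in \pp$, decompose both $g$ and~$f$ into $A$-homogeneous pieces, and induct on the number of homogeneous components of~$f$ (chosen minimal among representatives of~$\pp$), using the compatible order to extract a leading term of~$gf$ and force the homogeneous summands of~$g$ successively into~$\pp$. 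This is a routine adaptation of the classical proof that associated primes of modules graded by a torsion-free abelian group are themselves graded.

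For the $A$-graded primary decomposition, begin with any primary decomposition $I = Q_1 \cap \cdots \cap Q_r$, whose associated primes $\pp_i = \sqrt{Q_i}$ are now $A$-graded. Replace each~$Q_i$ by its largest $A$-graded subideal~$Q_i^*$, namely the ideal generated by the $A$-homogeneous elements of~$Q_i$. Since $\pp_i$ is $A$-graded, powers of its $A$-homogeneous elements lie in~$Q_i^*$, so $\sqrt{Q_i^*} = \pp_i$; and $Q_i^*$ remains $\pp_i$-primary by a second application of the compatible-order argument (given $ab \in Q_i^*$ with $a \notin \pp_i$, extract leading pieces of $a$ and $b$ to force each homogeneous component of $b$ into~$Q_i^*$). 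Because $I$ itself is $A$-graded, $I \subseteq \bigcap_i Q_i^* \subseteq \bigcap_i Q_i = I$, yielding the desired graded decomposition.

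For well-definedness of~$I_\andean$, the key observation is that the set $S \subseteq \mathrm{Ass}(\CC[\del]/I)$ of Andean associated primes is closed under passage to smaller associated primes: if $\qq \in S$ and $\pp \in \mathrm{Ass}(\CC[\del]/I)$ with $\pp \subseteq \qq$, then $\CC[\del]/\qq$ is a quotient of~$\CC[\del]/\pp$, so the Hilbert function of the latter pointwise dominates that of the former, and unboundedness for~$\qq$ forces unboundedness for~$\pp$. The standard uniqueness theorem for primary components indexed by such an isolated subset of associated primes then shows that $I_\andean = \bigcap_{\pp_i \in S} Q_i$ is independent of the chosen decomposition. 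Equivalently, prime avoidance applied to the fact just established—that no toral prime is contained in an Andean one—yields an element $T \in \CC[\del]$ lying in every toral associated prime and in no Andean one, whence $I_\andean = (I : T^\infty)$ is intrinsic to~$I$.

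The hard part will be the first assertion: adapting the graded-associated-prime argument to the $\ZZ A$-grading is where pointedness of~$\NN A$ does its real work, and one must verify that the order-theoretic extraction of leading terms cooperates with the minimization procedure on the number of homogeneous components. The subsequent two parts are essentially formal, reducing to bookkeeping with gradedness of primary components and to monotonicity of Hilbert functions under quotient, respectively.
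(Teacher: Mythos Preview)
Your proposal is correct and follows essentially the same line as the paper's proof. The paper simply cites \cite[Proposition~8.11]{cca} for the $A$-gradedness of associated primes (which is exactly the torsion-free-grading argument you sketch using pointedness of~$\NN A$), and for well-definedness of~$I_\andean$ it states precisely your key observation---that Andean primes are closed under going down, because the Hilbert function of~$\CC[\del]/\qq$ is dominated by that of~$\CC[\del]/\pp$ when $\pp \subseteq \qq$---and then implicitly invokes the standard uniqueness of the primary part indexed by an isolated set of associated primes, which you have made explicit.
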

\begin{proof}
The $A$-graded conclusion on the associated primes is
\cite[Proposition~8.11]{cca}.  The Andean part is well-defined because
if $\pp \supseteq \qq$ for some Andean~$\pp$ then $\qq$ is also
Andean.  (``The set of Andean primes is closed under going down.'')
\end{proof}

\begin{cor}\label{c:toralAndean}
If $I \subseteq \CC[\del]$ is an $A$-graded ideal, then $I$ admits a
decomposition
$$%
I = I_\toral \cap I_\andean
$$
into toral and Andean parts, where $I_\toral$ is the intersection of
the primary components of~$I$ with toral associated primes in any
fixed primary decomposition~of~$I$.
\end{cor}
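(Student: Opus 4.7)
The plan is to deduce this directly from Theorem~\ref{t:toralAndean} by partitioning the primary components of~$I$ according to whether their associated primes are toral or Andean. First I would invoke Theorem~\ref{t:toralAndean} to fix an $A$-graded primary decomposition $I = Q_1 \cap \cdots \cap Q_r$, noting that every associated prime of $I$ is $A$-graded because $\NN A$ is pointed. Then for each index~$i$, the associated prime $\pp_i = \sqrt{Q_i}$ is $A$-graded, and Definition~\ref{d:toralAndean} provides the dichotomy: the Hilbert function of $\CC[\del]/\pp_i$ is either bounded (toral) or unbounded (Andean). Hence the index set $\{1,\ldots,r\}$ partitions into a toral subset $T$ and an Andean subset $N$.

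Next I would define
\[
  I_\toral = \bigcap_{i \in T} Q_i \quad\text{and}\quad I_\andean = \bigcap_{i \in N} Q_i,
\]
so that
\[
  I_\toral \cap I_\andean = \bigcap_{i \in T} Q_i \cap \bigcap_{i \in N} Q_i = \bigcap_{i=1}^r Q_i = I,
\]
which gives the asserted decomposition. The well-definedness of $I_\andean$ is part of the statement of Theorem~\ref{t:toralAndean}, which rests on the observation that if $\pp \supseteq \qq$ are $A$-graded primes with $\pp$ Andean, then $\qq$ is Andean too, so the set of Andean associated primes is determined by those that are minimal among Andean primes and the corresponding primary components are uniquely determined (for those of height equal to the minimal Andean height over~$I$; embedded Andean components combine unambiguously into $I_\andean$ by the going-down property cited in the proof of Theorem~\ref{t:toralAndean}).

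The one subtle point, which is where I would be most careful, is that $I_\toral$ need not be well-defined on its own—an embedded toral component may admit many choices—so the statement quantifies only over ``any fixed primary decomposition''. Accordingly, I would emphasize that the equality $I = I_\toral \cap I_\andean$ is asserted for whichever toral part $I_\toral$ is extracted from the chosen primary decomposition, and that the Andean part $I_\andean$ does not depend on this choice. No further work is required, since the decomposition is obtained just by grouping intersectands of a decomposition already supplied by Theorem~\ref{t:toralAndean}.
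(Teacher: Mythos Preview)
Your proposal is correct and is exactly the argument the paper intends: the corollary is stated without proof as an immediate consequence of Theorem~\ref{t:toralAndean}, obtained by partitioning the primary components of a fixed $A$-graded primary decomposition according to whether their associated primes are toral or Andean. Your remarks on the well-definedness of $I_\andean$ versus the dependence of $I_\toral$ on the chosen decomposition are appropriate and match the paper's emphasis.
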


\begin{defn}
The \emph{Andean arrangement} of an ideal $I$ is
$\qdeg(\CC[\del]/I_\andean)$.
\end{defn}

The Andean arrangement is a union of affine subspaces of~$\CC^d$ by
Lemma~\ref{l:Z}.  It is well-defined by Theorem~\ref{t:toralAndean}.

\begin{example}\label{e:andean}
The \1100 lattice ideal $I(B) = \<\del_1\del_3 - \del_2, \del_1\del_4
- \del_2\>$ from Example~\ref{e:1100B} has primary decomposition
$$%
\begin{array}{r@{\ }c@{\ }c@{\ }c@{\ }c@{\ }}
  I(B) &=& I_A &\cap& \<\del_1,\del_2\>
\\     &=& I_\toral &\cap& I_\andean,
\end{array}
$$
with toral part $I_\toral = I_A$ and Andean part $I_\andean =
\<\del_1,\del_2\>$ by Example~\ref{e:1100toralAndean}.  Therefore the
Andean arrangement of~$I(B)$ is the thick vertical line in
Example~\ref{e:tdeg}.2.
\end{example}

\subsection{Counting series solutions}\label{s:sols}

The distinction between toral and Andean primes provides the framework
for the answers to Questions~\ref{q}.  Throughout the remainder of
this section, fix a matrix $B \in \ZZ^{n \times m}$ of rank $m = n -
d$ such that $AB = 0$.  Assume that $B$ is \emph{mixed}, meaning that
every nonzero integer vector in the span of the columns of~$B$ has two
nonzero entries of opposite~sign.  The mixed condition is a technical
hypothesis arising while constructing series solu\-tions to
$H(B,\beta)$; its main algebraic consequence is that it forces $\NN A$
to be pointed.

The \emph{multiplicity} of a prime ideal~$\pp$ in an ideal~$I$ is, by
definition, the length of the largest submodule of finite length in
the localization $\CC[\del]_\pp/I_\pp$.  This number is nonzero
precisely when $\pp$ is associated to~$I$.  Combinatorially, when
$\pp$ and $I$ are binomial ideals, the multiplicity of~$\pp$ in~$I$
counts connected components of graphs related to~$G_I$, such as those
in Corollary~\ref{c:primDecomp}.  For the purposes of Horn systems,
the most relevant number is derived from multiplicities of prime
ideals as follows.

\begin{defn}
The \emph{multiplicity} $\mu(L,J)$ of a saturated sublattice $L
\subseteq \ZZ^J$ is the product $\iota\mu$, where $\iota$ is the index
$|L/(\ZZ B\cap\ZZ^J)|$ of the sublattice $\ZZ B \cap \ZZ^J$ in~$L$,
and $\mu$ is the multiplicity of the binomial prime ideal $I_L +
\mm_J$ in the lattice ideal~$I(B)$.
\end{defn}

The factor $\iota = |L/(\ZZ B\cap\ZZ^J)|$ counts the number of partial
characters $\rho : L \to \CC^*$ for which $I_{\rho,J}$ is associated
to~$I(B)$.  It is the penultimate combinatorial input required to
count solutions to Horn systems.  The final one is polyhedral.

\begin{defn}
For any subset $J \subseteq \{1,\ldots,n\}$, write $\vol(A_J)$ for the
\emph{volume} of the convex hull of~$A_J$ and the origin, normalized
so a lattice simplex in~$\ZZ A_J$ has~volume~$1$.
\end{defn}

\begin{example}\label{e:vol}
In the 0123 case, $\vol(A) = 3$, since the columns of $A$
span~$\ZZ^2$, and the convex hull of~$A$ with the origin is a triangle
that is a union of three lattice triangles.
$$%
\psfrag{vol(A) = 3}{\small$\vol(A) = 3$}
\psfrag{vol(A_J) = 1}{\small$\vol(A_{\{1,4\}}) = 1$}
\begin{array}{c@{}}\\[-2.8ex]\includegraphics{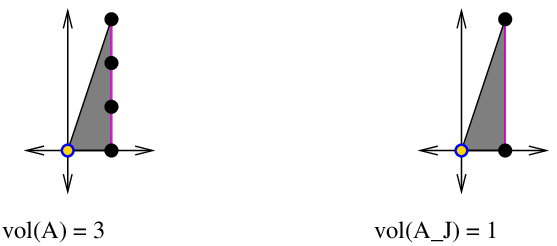}\end{array}
$$
When $J = \{1,4\}$, in contrast, $\vol(A_J) = 1$, since the first and
last columns of~$A$ form a basis for the lattice (of index~$3$
in~$\ZZ^2$) that they span.
\end{example}

\begin{answers}\label{a}
The answers to Questions~\ref{q} for the systems $H(B,\beta)$ are as
follows.
\begin{enumerate}
\item
The dimension is finite exactly when $\beta$ lies in the Andean
arrangement of~$I(B)$.

\item
The generic (minimum) dimension is $\sum \mu(L,J) \cdot \vol(A_J)$,
the sum being over all saturated $L \subseteq \ZZ^J$ such that $I_L +
\mm_J$ is a toral binomial prime with $\CC A_J = \CC^d$.

\item
The minimum rank is attained precisely when $\beta$ lies outside of an
affine subspace arrangement determined by certain local cohomology
modules, with the same flavor as (and containing) the Andean
arrangement.

\item\label{a:support}
If the configuration $A$ lies in an affine hyperplane not containing
the origin, and $\beta$ is general, then the solution space of
$H(B,\beta)$ has a basis containing precisely $\sum_J \mu(L,J) \cdot
\vol(A_J)$ Puiseux series supported on finitely many cosets of~$L$.
\end{enumerate}
\end{answers}

\begin{example}
To illustrate Answer~\ref{a}.1 in the \1100 case, compare
Example~\ref{e:uncountable} to Example~\ref{e:andean}: the solution
space has finite dimension precisely when the parameter lies off the
vertical axis, which is the Andean arrangement in this case.
\end{example}

\begin{example}\label{e:sol0123}
In contrast, both associated primes are toral in the 0123 case, where
$$%
  I(B) = I_A \cap \<\del_2,\del_3\>.
$$
Indeed, the quotient of~$\CC[\del]$ modulo each of these components is
an $A$-graded affine semigroup ring, with the second component
yielding $\CC[\del]/\<\del_2,\del_3\> \cong \CC[\NN A_{\{1,4\}}]$.  It
follows that the solution space has finite dimension for all
parameters~$\beta$.

On the other hand, Answer~\ref{a}.2 is interesting in this 0123 case:
Example~\ref{e:vol} implies that $H(B,\beta)$ has generic solution
space of dimension
$$%
  3 + 1 = \mu\bigl(\ZZ B, \{1,2,3,4\}\bigr) + \mu\bigl(\{0\},
  \{1,4\}\bigr),
$$
with the first summand giving rise to solution series of full support,
and the second summand giving rise to one solution series with finite
support---that is, supported on finitely many cosets of~$\{0\}$---by
Answer~\ref{a}.4.  Compare Example~\ref{e:puiseux}.
\end{example}

\begin{proof}[Proof of Answers~\ref{a}]
These are some of the main results of \cite{dmm}, namely:
\begin{enumerate}
\item%
Theorem~6.3.
\item%
Theorem~6.10.
\item%
Definition~6.9 and Theorem~6.10.
\item%
Theorem~6.10, Theorem~7.14, and Corollary~7.25.
\end{enumerate}
The basic idea is to filter $\CC[\del]/I(B)$ with successive quotients
that are $A$-graded translates of $\CC[\del]/\pp$ for various binomial
primes~$\pp$.  There is a functorial (``Euler--Koszul'') way to lift
this to a filtration of a corresponding $D$-module canonically
constructed from $H(B,\beta)$.  The successive quotients in this
lifted filtration are \emph{$A$-hypergeometric systems} of Gelfand,
Graev, Kapranov, and Zelevinsky \cite{ggz,gkz}.  The solution space
dimension equals the volume for such hypergeometric systems, and the
factor $\mu(L,J)$ simply counts how many times a given such
hypergeometric system appears as a successive quotient in the
$D$-module filtration.  That, together with series solutions
constructed by GGKZ, proves Answers~2 and~4.

When a successive quotient is $\CC[\del]/\pp$ for an Andean
prime~$\pp$, the Euler operators span a vector space of too small
dimension; they consequently fail to cut down the solution space to
finite dimension: at least one ``extra'' Euler operator is needed.
Without this extra Euler operator, its (missing) continuous parameter
allows an uncountable family of solutions as in
Example~\ref{e:uncountable}; this proves Answer~1.  Answer~3 is really
a corollary of the main results of~\cite{mmw}, and is beyond the scope
of~this~survey.
\end{proof}

To explain Erd\'elyi's observation (Example~\ref{e:puiseux}), note
that only $|\ker A/\ZZ B| \cdot \vol(A)$ many of the series solutions
from Answer~\ref{a}.4 have full support, where $\ker(A) = (\ZZ
B)_\sat$ is the saturation of the image of~$B$.  The remaining
solutions have smaller support.  Most lattice basis ideals have
associated primes other than~$I_{\ZZ B}$ \cite{hosten-shapiro}, so
most Horn systems $H(B,\beta)$ have spurious solutions, whether they
be of the toral kind (finite-dimensional, but small support, perhaps
for special paramaters~$\beta$) or Andean kind (uncountable
dimensional).

The ``hyperplane not containing the origin'' condition in
Answer~\ref{a}.4 amounts to a homogeneity condition on~$I(B)$: the
generators should be homogeneous under the standard $\NN$-grading
of~$\CC[\del]$, as in Example~\ref{e:sol0123}.  More deeply, this
condition is equivalent to regular holonomicity of the corresponding
$D$-module \cite{uli06}.  As soon as the support of a Puiseux series
solution is specified, hypergeometric recursions determine the
coefficients up to a global scalar.

The recursive rules governing the coefficients of series solutions to
Horn hypergeometric systems force the combinatorics of lattice-point
graphs upon binomial primary decomposition of lattice basis ideals,
via the arguments in the proof of Answers~\ref{a}.  Granted the
generality of Sections~\ref{s:affine}--\ref{s:decomp}, it subsequently
follows that the questions as well as the answers work essentially as
well for arbitrary $A$-graded binomial ideals, with little adjustment
\cite{dmm}.

\section{Combinatorial games}\label{s:games}

Combinatorial games are two-player affairs in which the sides
alternate moves, both with complete information and no element of
chance.  The germinal goal of Combinatorial Game Theory (CGT) is to
find strategies for such games.  After briefly reviewing the
foundations and history of CGT using some key examples
(Section~\ref{s:cgt}), this section provides an overview of how to
phrase the theory in terms of lattice points in polyhedra
(Section~\ref{s:lattice}).  Exploring data structures for strategies
as generating functions (Section~\ref{s:ratstrat}) or in terms of
mis\`ere quotients (Section~\ref{s:misere}) leads to conjectures and
computational open problems involving binomial ideals and related
combinatorics.

\subsection{Introduction to combinatorial game theory}\label{s:cgt}

There are many different ways to represent games and winning
strategies by combinatorial structures.  To understand their formal
definitions, it is best to have in mind some concrete examples.

\begin{example}\label{e:nim}
The quintessential combinatorial game is \nim.  The players---you
and~I, say---are presented with a finite number of heaps of beans,
such as
$$%
\begin{array}{@{}c@{}}\includegraphics{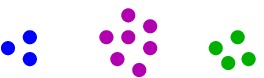}\end{array}
$$
when there are three heaps, of sizes {\color{blue}$3$},
{\color{darkpurple}$7$}, and~{\color{darkgreen}$4$}.  Any finite set
of heaps is a \emph{position} in the game of \nim.  The game is played
by alternating turns, where each turn consists of picking one of the
heaps and removing at least one bean from it.  For instance, if you
play first, then you could remove one bean from the
{\color{blue}$3$}-heap, or three beans from the
{\color{darkgreen}$4$}-heap, or all of the beans from the
{\color{darkpurple}$7$}-heap, to get one of the following positions:
$$%
\begin{array}{@{}c@{}}\includegraphics{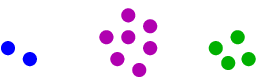}\end{array}
\qquad\text{\quad or\quad}\qquad
\begin{array}{@{}c@{}}\includegraphics{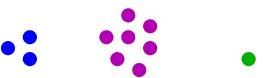}\end{array}
\qquad\text{\quad or\quad}\qquad
\begin{array}{@{}c@{}}\includegraphics{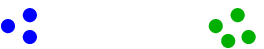}\end{array}
$$

The goal of the game is to play last.  As it turns out, if I play
first in the
{\color{blue}$3$}--{\color{darkpurple}$7$}--{\color{darkgreen}$4$}
game, then you can always force a win by ensuring that you play last.
How?  Take the \emph{nim sum} of the heap sizes: express each heap
size in binary and add these binary numbers digit by digit, as
elements of the field~$\FF_2$ of cardinality~$2$:
$$%
\begin{array}{ccc}
       & 1 & 1
  \\ 1 & 1 & 1
  \\ 1 & 0 & 0
  \\\hline
     0 & 0 & 0
\end{array}
$$
Whatever move I make will alter only one of the summands and hence
will leave a nonzero nim sum, at which point you can always remove
beans from a heap to reset the nim sum to zero; I~can't win because
removing the last bean leaves a zero nim sum.  This general solution
to \nim\ is one of the oldest formal contributions to combinatorial
game theory \cite{nim}.  More general ``heap games'', in which players
take beans from heaps according to specified rules, constitute a core
class of examples~for~the~theory.
\end{example}

\begin{example}\label{e:dawson}
\chess\ and its variants give rise to a rich bounty of combinatorial
games.  One of the most famous, other than \chess\ itself, is \dawson,
played on a $3 \times d$ board with an initial position of opposing
pawns facing one another with a blank rank (row) in between
\cite{dawson}.  When $d = 7$, here is the initial position:
$$%
\begin{array}{@{}c@{}}\includegraphics[height=17mm]{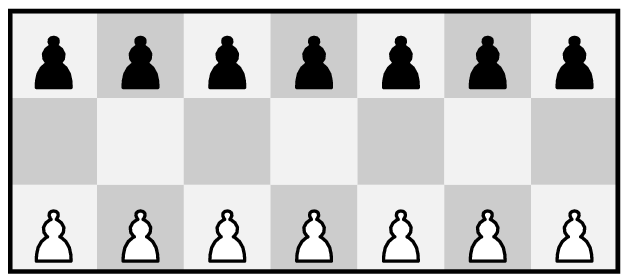}\end{array}
$$
Moves in \dawson\ are the usual moves of \chess\ pawns.  The only
additional rule is that a capture must be made if one is possible.
For example, if white moves first (as usual) and chooses to push the
third pawn, then the game could begin as~follows:
$$%
\begin{array}{@{}c@{}}
\\[-2ex]
\begin{array}{@{}c@{}}\includegraphics[height=17mm]{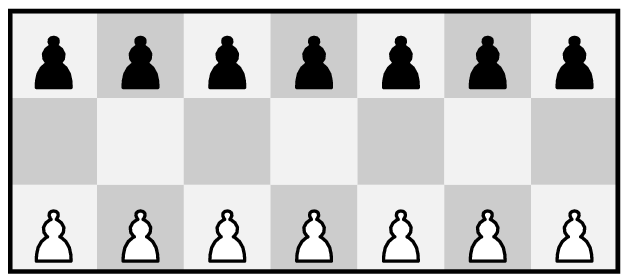}\end{array}
\goesto
\begin{array}{@{}c@{}}\includegraphics[height=17mm]{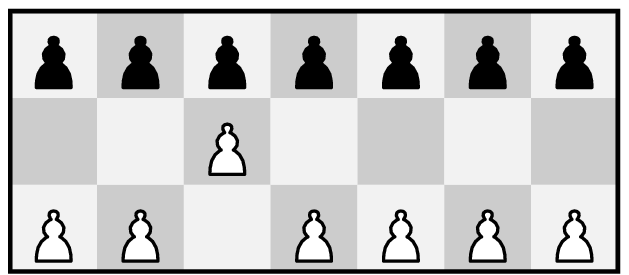}\end{array}
\goesto
\begin{array}{@{}c@{}}\includegraphics[height=17mm]{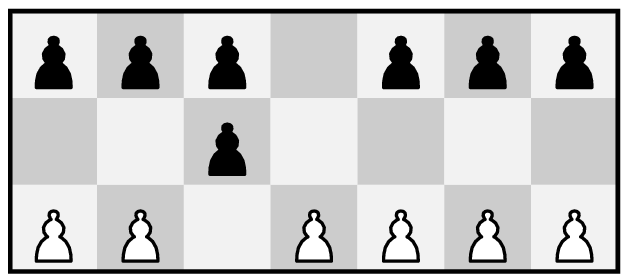}\end{array}
\\[6ex]\qquad
\goesto
\begin{array}{@{}c@{}}\includegraphics[height=17mm]{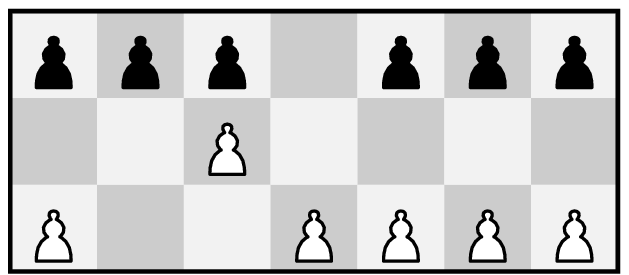}\end{array}
\goesto
\begin{array}{@{}c@{}}\includegraphics[height=17mm]{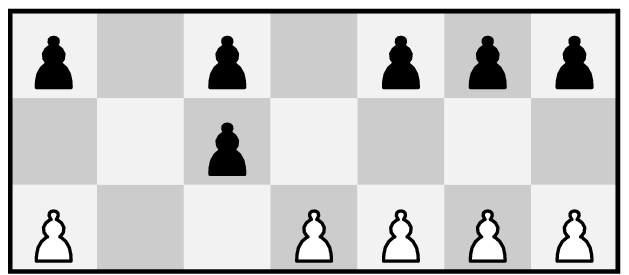}\end{array}
\goesto
\begin{array}{@{}c@{}}\includegraphics[height=17mm]{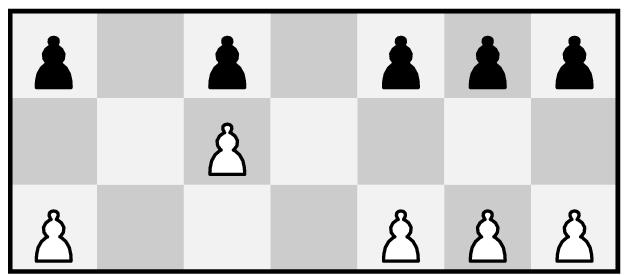}\end{array}
\end{array}
$$%
And now it is black's turn; with no captures available, black is
allowed to push any pawn in file (column) $1$, $5$, $6$, or~$7$ down
to the middle rank.  Another bloodbath ensues, and then it is white's
turn to push a pawn freely.

The goal of \dawson\ is to force your opponent to play last.  Thus, in
contrast to \nim, a player has won when their turn arrives and no
moves are~available.
\end{example}

\begin{defn}\label{d:gameGraph}
An \emph{impartial combinatorial game} is a rooted directed graph on
which two players alternate \emph{moves} along edges.  A player
\emph{wins} by moving to a node (\emph{position}) with no outgoing
edges.  A game is \emph{finite} if its graph is finite and has no
directed cycles.
\end{defn}

The root of the directed graph corresponds to the \emph{initial
position}.  Any finite impartial game can equivalently be represented
as a rooted tree in which the children of each position correspond to
its \emph{options}: the endpoints of its outgoing edges in any
directed graph representation.  A given position might be repeated in
the tree representation.  Given a tree representation, an optimally
efficient directed graph representation can be constructed by
identifying all vertices whose descendant subtrees are isomorphic.

\begin{example}\label{e:nimTree}
In the language of combinatorial game theory, \nim\ is not a finite
game but a family of finite games that specifies a consistent ``rule
set'' for how to play starting from any particular initial position
among an infinite number of possibilities.  In the situation of
Example~\ref{e:nim}, using $ijk$ for $0 \leq i,j,k \leq 9$ to
represent three heaps of sizes $i$, $j$, and~$k$, the top of the game
tree is
$$%
\psfrag{374}{$\scriptstyle {\color{blue}3}{\color{darkpurple}7}{\color{darkgreen}4}$}
\psfrag{074}{$\scriptstyle {\color{blue}0}{\color{darkpurple}7}{\color{darkgreen}4}$}
\psfrag{174}{$\scriptstyle {\color{blue}1}{\color{darkpurple}7}{\color{darkgreen}4}$}
\psfrag{274}{$\scriptstyle {\color{blue}2}{\color{darkpurple}7}{\color{darkgreen}4}$}
\psfrag{304}{$\scriptstyle {\color{blue}3}{\color{darkpurple}0}{\color{darkgreen}4}$}
\psfrag{314}{$\scriptstyle {\color{blue}3}{\color{darkpurple}1}{\color{darkgreen}4}$}
\psfrag{372}{$\scriptstyle {\color{blue}3}{\color{darkpurple}7}{\color{darkgreen}2}$}
\psfrag{373}{$\scriptstyle {\color{blue}3}{\color{darkpurple}7}{\color{darkgreen}3}$}
\psfrag{004}{$\scriptstyle {\color{blue}0}{\color{darkpurple}0}{\color{darkgreen}4}$}
\psfrag{014}{$\scriptstyle {\color{blue}0}{\color{darkpurple}1}{\color{darkgreen}4}$}
\psfrag{073}{$\scriptstyle {\color{blue}0}{\color{darkpurple}7}{\color{darkgreen}3}$}
\psfrag{302}{$\scriptstyle {\color{blue}3}{\color{darkpurple}0}{\color{darkgreen}2}$}
\psfrag{303}{$\scriptstyle {\color{blue}3}{\color{darkpurple}0}{\color{darkgreen}3}$}
\psfrag{371}{$\scriptstyle {\color{blue}3}{\color{darkpurple}7}{\color{darkgreen}1}$}
\psfrag{...}{\small\,$\cdots$}
\psfrag{.}{\small$\vdots$}
\begin{array}{@{}c@{}}\includegraphics{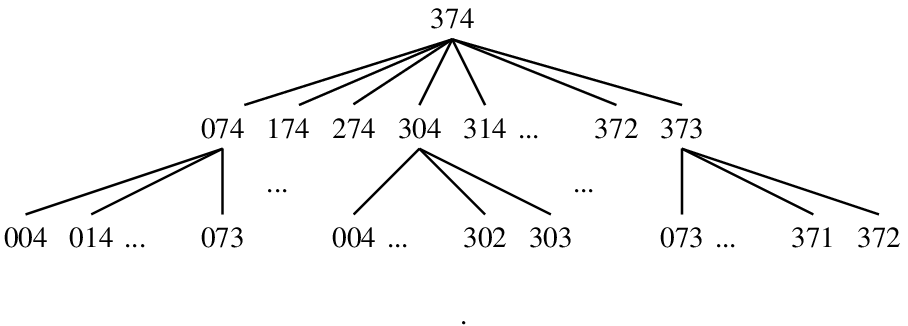}\end{array}
$$
Note that positions such as
${\color{blue}3}{\color{darkpurple}7}{\color{darkgreen}2}$,
${\color{blue}0}{\color{darkpurple}0}{\color{darkgreen}4}$, and
${\color{blue}0}{\color{darkpurple}7}{\color{darkgreen}3}$ are
repeated in the tree (even in the small bit of the tree depicted
here), at the same level or at multiple levels; their descendant
subtrees can be identified to form another directed graph
representation of this game of \nim.  Every leaf of the tree
corresponds to the position
${\color{blue}0}{\color{darkpurple}0}{\color{darkgreen}0}$.
\end{example}

\begin{remark}\label{r:impartial}
It is natural to wonder why the games in Definition~\ref{d:gameGraph}
are called ``impartial''.  The term is meant to indicate that the
players have the same options available from each position, as opposed
to \emph{partizan} games, where the players can have distinct sets of
options.  ``But only one player is allowed to play from each
position,'' you may argue, ``so how can you tell the difference
between impartial and partizan games?''  The answer is to put two
games side by side; this is called the \emph{disjunctive sum} of the
two games: the player whose turn it is chooses one game and plays any
of their legal moves in that game.  The result is that a player can
end up making consecutive moves in a single game, if the intervening
move took place in the other game.  In a partizan game, such as
ordinary \chess, white's options from any given position are different
from black's options.  When white makes two consecutive moves on the
same board, they are two consecutive moves \emph{of white pieces
only}.  In contrast, in an impartial game such as \nim, a move by
either player could have been made by the other player, if the other
player had the chance.
\end{remark}

Beyond the ability to distinguish between impartial and partizan
games, what are disjunctive sums good for?  The answer is that these
sums arise naturally when positions decompose, in the course of play,
into smaller independent subgames.

\begin{example}\label{e:dawson1+3}
The final position in Example~\ref{e:dawson} can be represented as a
disjoint union of two \dawson\ boards, one with one file and one
with~three,
$$%
\begin{array}{rcl}
\\[-2ex]
\begin{array}{@{}c@{}}\includegraphics[height=17mm]{dawson6.eps}\end{array}
&\quad=\quad&
\begin{array}{@{}c@{}}\includegraphics[height=17mm]{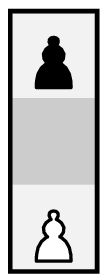}\end{array}
\quad\; + \;\quad
\begin{array}{@{}c@{}}\includegraphics[height=17mm]{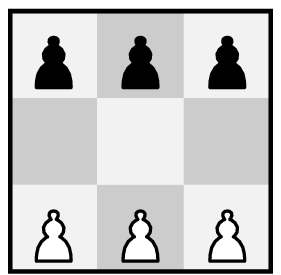}\end{array}
\end{array}
$$
except that now it is black's turn to move.  Similarly, an initial
move at either end of the board obliterates the two columns at that
end, leaving the other player to move.  In addition, any move on a $3
\times 1$ or $3 \times 2$ board obliterates the entire board, as does
a move on the middle file of a $3 \times 3$ board.

This description implies that \dawson\ is a heap game, by restricting
to the ordinary (non-capturing) pawn moves.  Indeed, any connected
board in its initial position is a heap whose size is its number of
files (that is, its width), and any position between bloodbaths is a
disjunctive sum of such boards.  The rules allow any player~to
\begin{itemize}
\item%
eliminate any heap of size $1$, $2$, or $3$;
\item%
take two or three from any heap of size at least~$3$; or
\item%
split any heap of size $d \geq 3$ into two heaps of sizes $k$ and
$d-3-k$.
\end{itemize}
\end{example}

Dawson's choice for the ending of his fairy chess game was both
unfortunate and fortuitous.  It was unfortunate because it made the
game hard: over three quarters of a century after Dawson published his
little game, its solution remains elusive, both computationally and in
a closed form akin to Bouton's \nim\ solution.

\begin{prob}\label{o:dawson}
Determine a winning strategy for \dawson\ and find a polynomial-time
algorithm to calculate it.
\end{prob}

\noindent
Dawson's choice was fortuitous because the simple change of ending
uncovered a remarkable phenomenon: the vast difference between trying
to lose and trying~to~win.

\begin{defn}
Given a finite combinatorial game, the \emph{mis\`ere play} version
declares the winner to be the player who does not move last.
\end{defn}

\noindent
Thus mis\`ere play is what happens when both players try to lose under
the \emph{normal play} rules.  It fosters amusing titles such as
``Advances in losing'' \cite{plambeck09}.  \dawson\ motivated
substantial portions of the development of CGT over the past
few~decades.

Mis\`ere games are generally much more complex than their normal-play
counterparts.  Heuristically, the reason is that, in contrast to the
unique ``zero position'' in normal play, the multiple ``penultimate
positions'' that become winning positions in mis\`ere play cause
ramifications in positions expanding farther from the zero position,
and these ramifications interfere with one another in relatively
unpredictable ways.

Regardless of the reason, aspects of the fact of mis\`ere difficulty
were formalized by Conway in the 1970s (see \cite{ONAG}).  There are,
for example, many more non-isomorphic impartial mis\`ere games than
impartial normal play games of any given \emph{birthday} (the height
of the game tree).  For comparison, note that a complete structure
theory for normal play games was formulated in the late 1930s
\cite{sprague36,grundy39}.  It is based on the \emph{Sprague--Grundy
theorem}, building on Bouton's solution of \nim\ by reducing all
finite impartial games to~it: every impartial game under normal play
is, in a precise sense, equivalent to a single \nim\ heap of some
size.  (The details of this theory would be more appropriate for a
focused exposition on the foundations of CGT, such as Siegel's highly
recommended lecture notes \cite{siegel06}, which proceed quickly to
the substantive aspects from an algebraic perspective.  Additional
background and details can be found in \cite{lessons,winningWays1}.)
Because the ``zero position'' is declared off-limits in mis\`ere
structure theory, the elegant additivity of normal play under
disjunctive sum fails for mis\`ere play, and what results is
algebraically complicated in that case; see Section~\ref{s:misere}.

\subsection{Lattice games}\label{s:lattice}

Impartial combinatorial games admit a reformulation in terms of
lattice points in polyhedra.  For the purpose of Open
Problem~\ref{o:dawson}, the idea is to bring to bear the substantial
algorithmic theory of rational polyhedra \cite{bw03}.  The
transformation begins by a simple change of perspective on \nim,
\dawson, and other heap games.

\begin{example}\label{e:nimLattice}
For certain families of games, the game tree is an inefficient
encoding.  For heap games, it is better to arrange the numbers
of heaps of each size into a nonnegative integer vector whose
$i^\th$ entry is the number of heaps of size~$i$.  Thus the
${\color{blue}3}{\color{darkpurple}7}{\color{darkgreen}4}$ and
${\color{blue}3}{\color{darkpurple}7}{\color{darkgreen}3}$
positions from Example~\ref{e:nimTree} become
$$%
\begin{array}{l}
  {\color{blue}3}\ {\color{darkpurple}7}\ {\color{darkgreen}4}
  \leftrightarrow
  (0,0,{\color{blue}1},{\color{darkgreen}1},0,0,{\color{darkpurple}1}) \in \NN^7
\\
  {\color{blue}3}\ {\color{darkpurple}7}\ {\color{darkgreen}3}
  \leftrightarrow
  (0,0,{\color{turquoise}2},0,0,0,{\color{darkpurple}1}) \in \NN^7.
\end{array}
$$
The moves ``make a heap of size~$j$ into a heap of size $i < j$'' and
``remove a heap of size~$j$'' correspond to other (not necessarily
positive) integer vectors, namely
$$%
\begin{array}{r@{\ }c@{\ }l}
  (\ldots,1,\ldots,-1,\ldots) &=& e_i - e_j \text{ for } i < j, \text{ and}
\\         (\ldots,-1,\ldots) &=& -e_j \text{ for all } j \geq 1,
\end{array}
$$
where $e_1,\ldots,e_d$ is the standard basis of~$\ZZ^d$.  All entries
in the moves are zero except for the $1$ and~$-1$ entries indicated.
\nim\ looks curiously like it could be connected to root systems of
type~$A$, but nothing has been made of this connection.
\end{example}

Example~\ref{e:nimLattice} says that \nim\ positions with heaps of
size at most~$d$ are points in~$\NN^d$, and moves between them are
vectors in~$\ZZ^d$.  The idea behind lattice games is to polyhedrally
formalize the relationship between the positions and moves.  To that
end, for the rest of this section, fix a pointed rational cone $C
\subseteq \ZZ^d$ of dimension~$d$, and write $Q = C \cap \ZZ^d$ for
the normal affine semigroup of integer points in~$C$.  As in earlier
sections, basic knowledge of polyhedra is assumed; see \cite{ziegler}
for additional background.  For simplicity, this survey restricts
attention to lattice games that are played on (the lattice points in)
cones instead of arbitrary polyhedra, and to special rule sets for
which every position has a path to the origin (cf.\
\cite[Lemma~3.5]{latticeGames}); see \cite[\S2]{latticeGames} for full
generality.

Briefly, a lattice game is played by moving a token on a game board
comprising all but finitely many of the lattice points in a polyhedral
cone.  The allowed moves come from a finite rule set consisting of
vectors that generate a pointed cone containing the game board cone.
The game ends when no legal moves are available; the winner is the
last player to move.  The mis\`ere condition is encoded by the
finitely many disallowed lattice point positions.  To define these
properly, it is necessary to define rule sets first.

\begin{defn}\label{d:ruleset}
A \emph{rule set} is a finite subset $\Gamma \subset \ZZ^d \minus
\{0\}$ such that
\begin{enumerate}
\item%
the affine semigroup $\NN\Gamma$ is \emph{pointed}, meaning that its
unit group is trivial, and
\item%
every lattice point $p \in Q$ has a \emph{$\Gamma$-path to~$0$}
in~$Q$, meaning a sequence
$$%
  0 = p_0, \ldots, p_r = p \text{ in } Q, \text{ with } p_{i+1} - p_i
  \in \Gamma,
$$
as illustrated in the following figure.

$$%
\psfrag{0}{\small$0$}
\psfrag{p}{\small$p$}
\psfrag{C}{\small$\!Q$}
\psfrag{NG}{\small{\color{darkgreen}$\NN\Gamma$}}
\begin{array}{@{}c@{}}\\[-2.3ex]\includegraphics{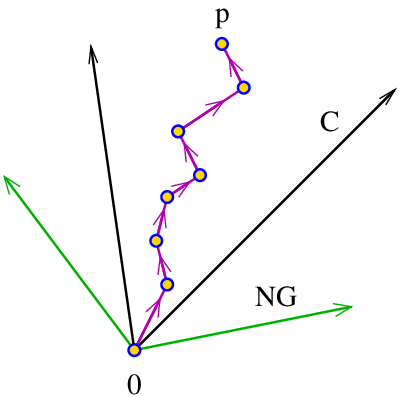}\end{array}
$$
\end{enumerate}
\end{defn}

With these conventions, moves correspond to elements of $-\Gamma$
rather than of~$\Gamma$ itself.  The sign is a choice that must be
made, and neither option is fully convenient.  The choice in
Definition~\ref{d:ruleset} prevents unpleasant signs in the next
lemma.

\begin{lemma}\label{l:NG}
$\NN\Gamma$ contains $Q$ and induces a partial order on~$\ZZ^d$ in
which $p \preceq q$ whenever $q - p \in \NN\Gamma$.
\end{lemma}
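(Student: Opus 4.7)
The plan is to prove the two assertions separately, in both cases leveraging only the two defining properties of a rule set from Definition~\ref{d:ruleset}. The whole argument is essentially bookkeeping, so I would aim to keep it very short and structural, with no serious obstacle to overcome.

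For the containment $Q \subseteq \NN\Gamma$, I would just invoke condition~(2). Given any $p \in Q$, pick a $\Gamma$-path $0 = p_0, p_1, \ldots, p_r = p$ in~$Q$, and telescope:
$$
  p = \sum_{i=0}^{r-1} (p_{i+1} - p_i),
$$
which exhibits $p$ as a nonnegative integer combination of elements of~$\Gamma$.

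For the second assertion, I would verify reflexivity, transitivity, and antisymmetry of the relation $p \preceq q :\iff q - p \in \NN\Gamma$ on~$\ZZ^d$. Reflexivity is immediate since $0 \in \NN\Gamma$, and transitivity follows because $\NN\Gamma$ is closed under addition: if $q - p, r - q \in \NN\Gamma$, then $r - p = (r-q)+(q-p) \in \NN\Gamma$. The one step that actually uses a hypothesis in an essential way is antisymmetry, and this is where I would appeal to condition~(1). If both $q - p$ and $p - q$ lie in~$\NN\Gamma$, then $v := q-p$ and $-v$ are both elements of the monoid~$\NN\Gamma$, making $v$ a unit; since the unit group of~$\NN\Gamma$ is trivial by pointedness, $v = 0$, so $p = q$.

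The main (indeed only) conceptual point is recognizing that ``pointedness of $\NN\Gamma$'' is exactly what is needed to ensure antisymmetry of the induced preorder on~$\ZZ^d$; everything else is formal. No further algebraic or polyhedral machinery is needed.
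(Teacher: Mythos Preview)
Your proof is correct and follows exactly the same approach as the paper's, which simply says that the containment is immediate from Definition~\ref{d:ruleset} and that the partial order occurs because $\NN\Gamma$ is a pointed affine semigroup. You have merely unpacked these two sentences: the telescoping argument is the content behind ``immediate from the definition'' (via condition~(2)), and your verification of antisymmetry from pointedness is precisely what the paper's second sentence is asserting.
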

\begin{proof}
The containment is immediate from Definition~\ref{d:ruleset}.  The
partial order occurs because $\NN\Gamma$ is a pointed affine
semigroup.
\end{proof}

\begin{defn}\label{d:latticegame}
A \emph{lattice game} played on a normal affine semigroup $Q = C \cap
\ZZ^d$~has
\begin{itemize}
\item%
a rule set $\Gamma$,

\item%
\emph{defeated positions} $D \subseteq Q$ that constitute a finite
$\Gamma$-order ideal, and

\item%
\emph{game board} $B = Q \minus D$.
\end{itemize}
\end{defn}

Lattice points in~$Q$ are referred to as \emph{positions}; note that
these might lie off the game board.  A position $p \in Q$ has a
\emph{move to~$q$} if $p - q \in \Gamma$; the move is \emph{legal} if
$q \in B$.  The order ideal condition, which means by definition that
$q \in D \implies p \in D$ if $p \preceq q$, guarantees that a legal
move must originate from a position on the game board~$B$.

\begin{example}
A heap game in which the heaps have size at most~$d$ is played on $Q =
\NN^d$.  Under normal play, the game board is all of~$\NN^d$, so $D =
\nothing$.  To get mis\`ere play, let the set of defeated positions be
$D = \{0\}$, so $B = \NN^d \minus \{0\}$.  Larger sets of defeated
positions allow generalizations of mis\`ere play not previously
considered.
\end{example}

\begin{example}\label{e:dawsonLattice}
\dawson\ is a lattice game on~$\NN^d$ when the heap sizes are bounded
by~$d$.  The game board in this case is $\NN^d \minus \{0\}$,
corresponding to mis\`ere play.  The rule set is composed of the
following vectors, by Example~\ref{e:dawson1+3}:
\begin{itemize}
\item%
$e_1$;
\item%
$e_2$ and $e_j - e_{j-2}$ for $j \geq 3$;
\item%
$e_3$ and $e_j - e_{j-3}$ for $j \geq 4$ and $e_j - e_i - e_{j-3-i}$
for $j \geq 5$.
\end{itemize}
\end{example}

Historically, the abstract theory of combinatorial games was developed
more using set theory than combinatorics.  Formally, a finite
impartial combinatorial game is often defined as a set consisting of
its options, each being, recursively, a finite impartial combinatorial
game.  Using this language, the disjunctive sum of two games $G$
and~$H$ is the game $G + H$ whose options comprise the union of $\{G'
+ H \mid G'$ is an option~of~$G\}$ and $\{G + H' \mid H'$ is an option
of~$H\}$.  A set of games is \emph{closed} if it is closed under
taking options and under disjunctive sum.  In particular, the
\emph{closure} of a single game~$G$ is the free commutative monoid
on~$G$ and its \emph{followers}, meaning the games obtained
recursively as an option, or an option of an option, etc.  See
\cite{misereQuots} and its references for more details on closure and
on the historical development of CGT.

\begin{thm}\label{t:arbitrary}
Any position in a lattice game determines a finite impartial
combinatorial game.  Conversely, the closure of an arbitrary finite
impartial combinatorial game, in normal or mis\`ere play, can be
encoded as a lattice game played on~$\NN^d$.
\end{thm}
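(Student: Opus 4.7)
The plan is to prove the two directions separately. For the forward direction, given a lattice game on $Q = C \cap \ZZ^d$ with rule set $\Gamma$ and game board $B$, and given a position $p \in B$, I form the rooted directed graph with root $p$, vertex set equal to the positions reachable from~$p$ by iterated legal moves, and an edge from $p'$ to $p''$ whenever $p' - p'' \in \Gamma$ and $p'' \in B$. The nontrivial claim is finiteness; acyclicity then follows because each legal move strictly decreases the $\NN\Gamma$-order. Finiteness reduces to finiteness of the interval $\{q \in Q : q \preceq p\}$, which holds by Lemma~\ref{l:NG} together with the following standard argument: since $\NN\Gamma$ is pointed, there is a linear functional $\ell: \RR^d \to \RR$ with $\ell > 0$ on $\NN\Gamma \minus \{0\}$; for any $q$ in the interval, $q = \sum c_i \gamma_i$ with $\gamma_i$ ranging over the finite generating set $\Gamma$ and $c_i \in \NN$, so $\ell(q) \leq \ell(p)$ bounds each $c_i$, leaving finitely many~$q$.

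For the converse, let $G$ be a finite impartial game and enumerate its finitely many followers $\{G_0 = G, G_1, \ldots, G_{d-1}\}$. I identify the closure of $G$ under options and disjunctive sum with $\NN^d$ via $\sum a_i G_i \leftrightarrow (a_0, \ldots, a_{d-1})$, where the zero game corresponds to $0 \in \NN^d$. For each pair $(i,j)$ such that $G_j$ is an option of~$G_i$, I include $e_i - e_j$ in $\Gamma$, and I include $e_i$ itself whenever the zero game is an option of~$G_i$. I take $B = \NN^d$ with $D = \nothing$ for normal play, and $B = \NN^d \minus \{0\}$ with $D = \{0\}$ for mis\`ere play. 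The rule-set axioms hold: pointedness of $\NN\Gamma$ is witnessed by the recursively defined height function $h(G_i)$ extended linearly, since every generator $\gamma = e_i - e_j \in \Gamma$ satisfies $h(\gamma) = h(G_i) - h(G_j) > 0$; every $p \in \NN^d$ has a $\Gamma$-path to~$0$ because each $e_i$ does (play $G_i$ to its end) and $\Gamma$-paths concatenate additively.

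Induction on height then verifies that the game graph of this lattice game at position $e_0$ reproduces the original game~$G$ and, more generally, that at $\sum a_i e_i$ it reproduces the disjunctive sum $\sum a_i G_i$. The key observation is that moves from $\sum a_i e_i$ correspond bijectively to disjunctive-sum options, because each $\gamma = e_i - e_j$ modifies a single positive coordinate, matching the rule that a disjunctive-sum move alters exactly one summand; the mis\`ere convention is captured correctly because removing~$\{0\}$ from the board disallows precisely the terminal move to the empty game, which is the losing move under mis\`ere play. The main obstacle is the bookkeeping in this last step---ensuring that each legal lattice-game move from any $p \in B$ corresponds to a genuine option in the disjunctive sum and vice versa---but once the encoding of options as $e_i - e_j$ is fixed, the induction is routine.
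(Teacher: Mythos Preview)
Your proof is correct and follows essentially the same approach as the paper, which merely cites Lemma~\ref{l:NG} for the forward direction and the external reference \cite[Theorem~5.1]{latticeGames} for the converse, noting only that ``if the game graph has $d$ nodes, then lattice points in $Q = \NN^d$ correspond to disjunctive sums of node-positions.'' You have supplied the details the paper omits: the linear-functional argument for finiteness of the interval below~$p$, the explicit rule set $\{e_i - e_j, e_i\}$, the height function witnessing pointedness, and the verification that removing the origin from the board encodes mis\`ere play correctly.
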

\begin{proof}
The first sentence is a consequence of Lemma~\ref{l:NG}.  The second
is \cite[Theorem~5.1]{latticeGames}: if the game graph has $d$ nodes,
then lattice points in $Q = \NN^d$ correspond to disjunctive sums of
node-positions.
\end{proof}

The proof of Theorem~\ref{t:arbitrary} clarifies an important point
about the connection between lattice games and games given by graphs:
lattice game encodings are efficient only when the nodes of the graph
represent ``truly different'' positions.

\begin{example}
The encoding of the
${\color{blue}3}{\color{darkpurple}7}{\color{darkgreen}4}$
\nim\ game in Example~\ref{e:nimTree} by using all~$d$ of the
followers of
${\color{blue}3}{\color{darkpurple}7}{\color{darkgreen}4}$ as
coordinate directions in~$\NN^d$ is woefully inefficient.  On
the other hand, the
${\color{blue}3}{\color{darkpurple}7}{\color{darkgreen}4}$
position is encoded efficiently in $\NN^7$ because it lies in
the closure of the single \nim\ heap of size~$7$, whose
followers are ``truly different'' from one another.
\end{example}

This explains part of the reason for allowing arbitrary normal affine
semigroups as game boards: more classes of combinatorial games beyond
heap games can be encoded efficiently.  That said, heap games are
now---and have been for decades---key sources of motivation and
examples.  As such, the encoding of heap games is particularly
efficient for the following class of games
\cite[\S6--\S7]{latticeGames}.

\begin{defn}
A lattice game is \emph{squarefree} if it is played on $\NN^d$ and the
maximum entry of any vector in the rule set is~$1$.  Equivalently, a
squarefree game represents a heap game in which each move destroys at
most one heap of each size.
\end{defn}

Multiple heaps of different sizes can be destroyed, and a destroyed
heap can be replaced with multiple heaps of other sizes, as long as
the moves still form a rule~set.

\begin{example}\label{e:octal}
Squarefree games are the natural limiting generalizations of
\emph{octal games}, invented by Guy and Smith \cite{gvalues} with
\dawson\ as a motivating example.  For each $k \in \NN$, an octal game
specifies whether or not any heap of size
\begin{itemize}
\item%
$k$ may be destroyed;
\item%
$j \geq k + 1$ may be turned into a heap of $j - k$; and
\item%
$j \geq k + 2$ may be turned into two heaps of sizes summing to $j-k$.
\end{itemize}
These constitute three binary choices, and hence are conveniently
represented by an octal digit $0 \leq d_k \leq 7$.  \dawson\ is
``$.137$'' as an octal game, where the digits correspond, in order, to
the types of moves in Example~\ref{e:dawsonLattice}.  For example, $7
= 111$ in binary indicates that all three options are allowed for $k =
3$, while $3 = 011$ indicates that only the top two are available for
$k = 2$.  The dot in ``$.137$'' is just a place-holder.
\end{example}

\subsection{Rational strategies}\label{s:ratstrat}

What does a strategy for a combinatorial game look like?  Abstractly,
the finiteness condition ensures that one of the players can force a
win.  The argument explaining why is recursive and elementary.  But
how does one describe such a strategy?  Lattice games provide
malleable data structures for this purpose.

\begin{defn}
Two subsets $W,L \subseteq B$ are \emph{winning} and \emph{losing}
positions for a lattice game with game board~$B$ if
\begin{itemize}
\item%
$B = W \cupdot L$ is the disjoint union of~$W$ and~$L$; and
\item%
$(W + \Gamma) \cap B = L$.
\end{itemize}
\end{defn}

Winning positions are the desired spots to move to.  The first
condition says that every position in~$B$ is either a winning position
(the player who moved to that spot can force a win) or a losing
position (the player who moves from that spot can force a win by
moving to a winning position).  The second condition says that losing
positions are precisely those with (legal) moves to winning positions.

\begin{example}\label{e:nim2}
Consider the game \nim$_2$ of \nim\ with heaps of size at most~$2$.
The rule set in this case is $\bigl\{[\twoline 10], [\twoline 01],
[\twoline {-1}{\phantom-1}]\bigr\}$.  The negatives of these
vectors---representing the legal moves from a generic position---are
depicted in the following figure, along with the winning positions in
\nim$_2$ for both for normal and mis\`ere play.  An easy way to verify
that the depicted sets~$W$ are forced is to figure out what happens on
the bottom row first, from left to right, and then proceed upward, row
by row.
$$%
\psfrag{misere play}{\small \hspace{-1ex}mis\`ere play}
\psfrag{normal play}{\small \hspace{-2ex}normal play}
\psfrag{D}{\small$D$}
\psfrag{NG}{\small\hspace{-.75ex}$-\Gamma$}
\begin{array}{@{}c@{}}\\[-2.3ex]\includegraphics{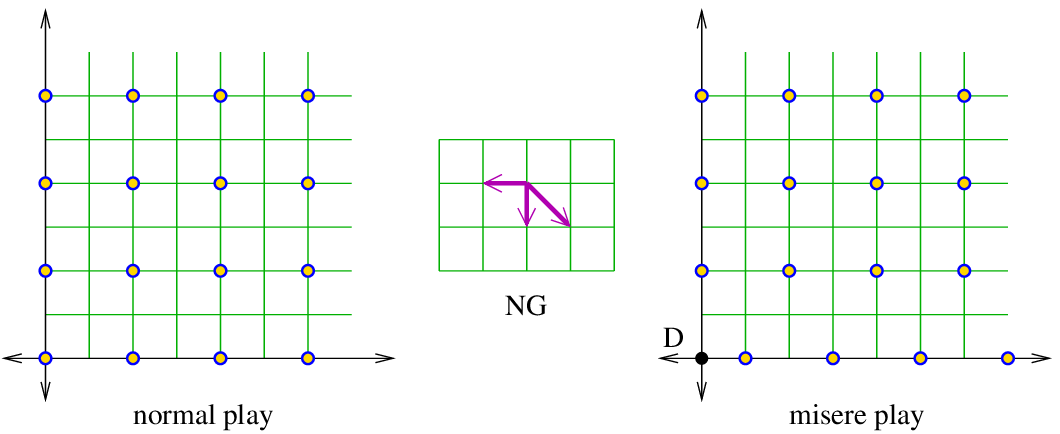}\end{array}
$$
The defeated position, labeled by~$D$ in the mis\`ere play diagram,
causes the bottom row of winning positions to be shifted over one unit
to the right.
\end{example}

\begin{remark}
The disarray caused by the defeated position in Example~\ref{e:nim2}
becomes substantially worse with more complicated rule sets in higher
dimensions.  Much of the study of mis\`ere combinatorial games amounts
to analyzing, quantifying, computing, and controlling the disarray.
\end{remark}

\begin{thm}\label{t:uniqueness}
Given a lattice game with rule set $\Gamma \subset \ZZ^d$ and game
board~$B$, there exist unique sets $W$ and~$L$ of winning and losing
positions for~$B$.
\end{thm}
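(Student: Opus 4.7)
The plan is to define $W$ and $L$ by well-founded recursion on the partial order $\preceq$ from Lemma~\ref{l:NG}. The first task is to show this order is well-founded on $Q$. Because $\NN\Gamma$ is a pointed affine semigroup and $\Gamma$ is finite, standard convex geometry supplies an integer linear functional $\phi \in (\ZZ^d)^*$ with $\phi(\gamma) \geq 1$ for every $\gamma \in \Gamma$, and hence $\phi(\alpha) \geq 1$ for all $\alpha \in \NN\Gamma \smallsetminus \{0\}$. The $\Gamma$-path hypothesis forces $Q \subseteq \NN\Gamma$, so $\phi(p) \geq 0$ for all $p \in Q$. Any strictly $\preceq$-decreasing chain in $Q$ would produce a strictly decreasing sequence of nonnegative integers via $\phi$, which is impossible.

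With well-foundedness in hand, I will define $W$ and $L$ simultaneously by recursion: for $p \in B$, having already determined $W$ on $\{q \in B : q \prec p\}$, declare $p \in L$ if some $\gamma \in \Gamma$ satisfies $p - \gamma \in W$, and declare $p \in W$ otherwise. Since $p - \gamma \prec p$, the recursive call refers only to strictly smaller positions, and the definition is unambiguous. A useful byproduct of the order-ideal hypothesis on $D$ is that whenever $p \in B$ and $p - \gamma \in Q$, we have $p - \gamma \in B$: otherwise $p - \gamma \in D$ together with $p - \gamma \preceq p$ would force $p \in D$ by downward-closure. So no ``legal move to a defeated position'' can occur, and points $p - \gamma$ that escape $Q$ entirely are trivially not in $W \subseteq B$.

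Next I verify the two defining conditions. The disjoint union $B = W \cupdot L$ is immediate from the construction. For $(W + \Gamma) \cap B = L$: if $\ell \in L$, the witnessing $\gamma$ gives $\ell - \gamma \in W$, so $\ell = (\ell - \gamma) + \gamma \in W + \Gamma$; conversely, if $\ell \in B$ and $\ell = w + \gamma$ for some $w \in W$ and $\gamma \in \Gamma$, then $\ell - \gamma = w \in W$, so the construction places $\ell \in L$.

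For uniqueness, given any second pair $(W', L')$ satisfying the conditions, I prove $W = W'$ (and hence $L = L'$) by well-founded induction on $\preceq$. If $W$ and $W'$ agree on $\{q \in B : q \prec p\}$, then the condition $(W' + \Gamma) \cap B = L'$ says $p \in L'$ iff some $\gamma \in \Gamma$ has $p - \gamma \in W' = W$, which is precisely the criterion placing $p \in L$ in the construction; together with $B = W \cupdot L = W' \cupdot L'$, this forces $p \in W \iff p \in W'$. I expect the only real obstacle to be articulating well-foundedness cleanly via the separating functional~$\phi$; once that is secured, both existence and uniqueness reduce to routine structural inductions.
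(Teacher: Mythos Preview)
Your argument is correct and follows essentially the same approach as the paper: the paper's proof merely cites \cite[Theorem~4.6]{latticeGames} and remarks that ``the cones generated by $Q$ and~$\NN\Gamma$ point in the same direction, so recursion is possible after declaring the $\Gamma$-minimal positions in~$B$ to be winning,'' which is exactly the well-founded recursion on~$\preceq$ that you carry out in detail. Your separating functional~$\phi$ is the standard device for making ``point in the same direction'' precise, and your handling of the order-ideal condition on~$D$ and the uniqueness induction are both clean.
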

\begin{proof}
This is \cite[Theorem~4.6]{latticeGames}.  The main point is that the
cones generated by $Q$ and~$\NN\Gamma$ point in the same direction, so
recursion is possible after declaring the $\Gamma$-minimal positions
in~$B$ to be winning.
\end{proof}

Thus everything there is to know about a lattice game is encoded by
its set of winning positions: given a pointed normal affine semigroup,
Theorem~\ref{t:uniqueness} implies that specifying a rule set and
defeated positions is the same as specifying a valid set of winning
positions, at least abstractly.  But the rule set encodes winning
strategies only implicitly, while the set of winning positions---or
better, a generating function $f_W(\ttt) = \sum_{w \in W} \ttt^w$ for
the winning positions---encodes the strategy explicitly.  The
following is \cite[Conjecture~8.5]{latticeGames}.

\begin{conj}\label{c:latticeGames}
Every lattice game has a \emph{rational strategy}: a generating
function for its winning positions expressed as a ratio of polynomials
with integer~coefficients.
\end{conj}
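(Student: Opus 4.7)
The plan is to show that $W \subseteq Q$ is a finite disjoint union of translates of affine subsemigroups of~$Q$; any such set has a rational generating function via the identity
\[
  \sum_{\aa \in p + \NN\{\aa_1,\ldots,\aa_k\}} \ttt^\aa = \frac{\ttt^p}{\prod_{i=1}^k (1 - \ttt^{\aa_i})}
\]
combined with inclusion--exclusion to handle overlaps and non-freely-generated pieces. So the real content is to produce such a decomposition of~$W$.

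First I would attempt to package the game data in a congruence $\sim$ on~$Q$ in the spirit of Section~\ref{s:cong}, so that the winning/losing partition of~$B$ is constant on $\sim$-classes. A natural candidate declares $p \sim q$ when the finite impartial games rooted at~$p$ and~$q$ (well-defined by Theorem~\ref{t:arbitrary} via the $\Gamma$-path condition of Lemma~\ref{l:NG}) are isomorphic as labeled rooted trees. Additive closure has to be checked carefully, since translating a position deeper into~$Q$ can enable new options by pulling positions away from~$D$; the correct congruence may therefore require comparing games only after a sufficiently large translation (an ``eventual equivalence''). Granted the right congruence, Theorem~\ref{t:uniqueness} implies that $W$ is exactly a union of $\sim$-classes.

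Next comes the crucial finiteness step: show that $\sim$ has only finitely many classes modulo translation by a finite-index sublattice of~$\ZZ Q$. The route I would pursue is to encode the game data as a binomial ideal $J_\Gamma \subseteq \kk[Q]$ whose induced congruence refines~$\sim$, and then combine the noetherian property for congruences on finitely generated commutative monoids (invoked in the proof of Proposition~\ref{p:witness}) with mesoprimary decomposition (Theorem~\ref{t:mesodecomp}) to partition each $\sim$-class of~$J_\Gamma$ into finitely many translates of affine subsemigroups. Each such translate contributes a rational generating function of the form displayed above, and the finite signed sum yields $f_W(\ttt)$ as a ratio of integer-coefficient polynomials.

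The main obstacle will be this finiteness. Abstract noetherianity of monoid congruences produces finitely many mesoprimary components of any given binomial ideal, but it is not a priori clear that game values themselves vary only in a periodic fashion along rays in~$Q$; mis\`ere play is precisely where pathological, aperiodic-looking behavior can emerge, as already hinted by the disarray in Example~\ref{e:nim2}. Ruling this out is essentially a stabilization theorem for mis\`ere game values on affine semigroups, and constitutes the heart of the conjecture: one must exhibit an \emph{a priori} bounded amount of combinatorial data that controls the winning/losing classification on all of~$B$. Once this finite cell decomposition of~$W$ is in hand, assembling $f_W(\ttt)$ from the pieces is routine.
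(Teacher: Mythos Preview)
The statement you are attempting to prove is labeled a \emph{Conjecture} in the paper, and no proof is offered there; it is stated as an open problem.  Your proposal does not close this gap, and you yourself acknowledge as much in the final paragraph: the ``stabilization theorem'' you identify as the heart of the matter is precisely what is not known.

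To be specific about where the proposal breaks down: the congruence you propose---game-tree isomorphism, or its ``eventual'' variant after large translation---is essentially the indistinguishability congruence of Definition~\ref{d:misere}, whose quotient is the mis\`ere quotient~$\oQ$.  The paper already proves (Theorem~\ref{t:misere}) that when $\oQ$ is \emph{finite}, the winning set~$W$ admits an affine stratification, and hence a rational strategy follows by Theorem~\ref{t:strat}.  But mis\`ere quotients are frequently infinite, and in that case your ``crucial finiteness step'' has no known proof.  There is no candidate binomial ideal~$J_\Gamma$ in the literature whose induced congruence is guaranteed to have only finitely many classes meeting~$W$ modulo a finite-index sublattice; noetherianity of monoid congruences (Proposition~\ref{p:witness}) gives finiteness of mesoprimary components \emph{of a fixed binomial ideal}, not finiteness of $\sim$-classes for an externally defined congruence like indistinguishability.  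So the reduction to mesoprimary decomposition is circular: you would first need to exhibit an ideal whose congruence already controls~$W$, and that is exactly the open content of Conjectures~\ref{c:latticeGames} and~\ref{c:affineStrat}.

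In short, your outline correctly reconstructs the program the paper sketches (affine stratification $\Rightarrow$ rational strategy), and correctly isolates the obstruction, but does not supply the missing ingredient.  What you have written is a restatement of why the conjecture is plausible and how one might attack it, not a proof.
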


\begin{example}\label{e:nim2'}
Resume Example~\ref{e:nim2}.  In normal play \nim$_2$, a rational
strategy is
$$%
  f_W(a,b) = \frac{1}{(1-a^2)(1-b^2)},
$$
the rational generating function for the affine semigroup~$2\NN^2$.
In mis\`ere play, a rational strategy is
$$%
  f_W(a,b) = \frac{a}{1-a^2} + \frac{b^2}{(1-a^2)(1-b^2)},
$$
where the first term enumerates the odd lattice points on the
horizontal axis, and the second enumerates the normal play winning
positions that lie off the horizontal axis.
\end{example}

\begin{example}\label{e:squarefree}
For a squarefree game, if\/ $W_0 = W \cap \{0,1\}^d \subseteq \NN^d$
then
$$%
  W = W_0 + 2\NN^d.
$$
This is \cite[Theorem~6.11]{latticeGames}.  It implies that the game
has a rational strategy
$$%
  f_W(\ttt) = \sum_{w\in W_0} \frac{\ttt^w}{(1-t_1^2)\cdots(1-t_d^2)}.
$$
The reader is encouraged to reconcile this statement with
Example~\ref{e:nim}.
\end{example}

A rational strategy has a reasonable claim to the title of ``solution
to a lattice game'' because it has the potential to be compact, and it
can be manipulated algorithmically.

\begin{thm}\label{t:rational}
A rational strategy for a lattice game produces algorithms~to
\begin{itemize}
\item%
determine whether a position is winning or losing, and
\item%
compute a legal move to a winning position, given any losing
position.
\end{itemize}
These algorithms are efficient when the rational strategy is a
\emph{short} rational function, in the sense of Barvinok and Woods
\cite{bw03}.
\end{thm}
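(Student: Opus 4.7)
The plan is to leverage the algorithmic theory of short rational generating functions developed by Barvinok and Woods~\cite{bw03}. Fix a rational strategy $f_W(\ttt) = \sum_{w\in W} \ttt^w$, expressed for the efficiency claim in short form as a polynomial-size sum of terms $\ttt^{a_i}/\prod_j(1-\ttt^{b_{ij}})$ with a bounded number of denominator factors per term.

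For the first algorithm, deciding whether a position $p \in Q$ is winning reduces to deciding whether the coefficient $[\ttt^p]\,f_W(\ttt)$ equals $1$ or~$0$. I would compute the Hadamard product $f_W \odot \ttt^p$, where the singleton $\{p\}$ is represented by the trivial short rational function $\ttt^p$. By the Hadamard-product theorem of Barvinok--Woods, the result is again short and can be produced in time polynomial in the input size (including $\log\|p\|$); its series is either $\ttt^p$ (so $p\in W$) or $0$ (so $p\in L$), and these are distinguished by any standard zero-test on the reduced short form---e.g.~numerical evaluation at a generic rational point avoiding the poles, or symbolic cancellation against the denominator.

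For the second algorithm, given a losing position $p\in L$, iterate over the finite rule set $\Gamma$ and, for each $\gamma\in\Gamma$, apply the first algorithm to test whether $p-\gamma\in W$. The defining property $(W+\Gamma)\cap B = L$ guarantees that at least one $\gamma$ succeeds; output such a $\gamma$. Correctness is immediate: $p-\gamma\in W$ means the move $p\mapsto p-\gamma$ is legal (since $W\subseteq B$) and lands on a winning position. This costs at most $|\Gamma|$ calls to the first algorithm, and $|\Gamma|$ depends only on the game, so the efficiency of~(i) transfers to~(ii).

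The main obstacle is essentially bookkeeping: one must verify that the Barvinok--Woods operations apply to $f_W$ in its presented short form, i.e.~that Hadamard product with a monomial preserves the short-function data structure and that subsequent zero-testing runs in polynomial time. Both are precisely theorems in~\cite{bw03}, so the crux of the argument is a clean reduction from the game-theoretic questions to coefficient extraction for short rational functions. Without the shortness hypothesis the same reductions remain valid and produce correct algorithms; only the polynomial-time guarantee is lost, since coefficient extraction from an arbitrary rational function need not be efficient.
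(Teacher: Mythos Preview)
Your proposal is correct and matches the paper's approach: the paper's proof is simply ``a straightforward application of the theory developed by Barvinok and Woods'' with details deferred to \cite{algsCGT}, and your reduction to coefficient extraction via Hadamard product with a monomial, followed by a scan over the finite rule set~$\Gamma$, is exactly the kind of straightforward application intended. The only caveat worth making explicit is that the Barvinok--Woods polynomial-time guarantees hold in fixed dimension~$d$, so ``efficient'' here should be read accordingly.
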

\begin{proof}
This is a straightforward application of the theory developed by
Barvinok and Woods; see \cite{algsCGT} for details.
\end{proof}

The efficiency in the theorem is in the sense of complexity theory.
Short rational generating functions have not too many terms in their
numerator and denominator polynomials.  They are algorithmically
efficient to manipulate and---when they enumerate lattice points in
polyhedra---to compute.  Since computations of lattice points in
rational polyhedra are efficient, it would be better to get a
polyhedral decomposition of the set~$W$ of winning positions.  In
fact, examples of lattice games exhibit a finer structural phenomenon
than is indicated \cite[Conjecture~8.9]{latticeGames} \&
\cite{commAlgCGT}.

\begin{conj}\label{c:affineStrat}
Every lattice game has an \emph{affine stratification}: an expression
of its winning positions as a finite union of translates of affine
semigroups.
\end{conj}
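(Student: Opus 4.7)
The plan is to generalize the squarefree case treated in Example~\ref{e:squarefree}, where $W = W_0 + 2\NN^d$, to arbitrary lattice games by constructing a finitely generated congruence on $Q$ whose classes respect the winning/losing dichotomy. Since the conjecture is open, the proposal is necessarily speculative: it aims to import the mesoprimary-style combinatorics from Sections~\ref{s:affine}--\ref{s:decomp} into the recursive world of Theorem~\ref{t:uniqueness}.

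First I would exploit Lemma~\ref{l:NG}: because $\NN\Gamma \supseteq Q$ induces a partial order $\preceq$ on~$Q$, the sets $W$ and~$L$ can be computed recursively by processing positions in $\preceq$-order, starting from the $\Gamma$-minimal positions of~$B$ (which are automatically winning). The first goal is a stabilization statement: away from a bounded neighborhood of the finite defeated set~$D$, this recursion should become periodic modulo translation by some sub-semigroup $Q' \subseteq Q$ whose lineality reflects the symmetries of~$\Gamma$.

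The central step is to construct a congruence~$\til$ on~$Q$ with two properties: $p \til q$ implies $p \in W \iff q \in W$, and the quotient $Q/\til$ is finite modulo a semifree action (Definition~\ref{d:orbit}) of a suitable affine sub-semigroup. To build~$\til$, I would associate to each position~$p$ a \emph{local strategic profile} recording which of its near $\Gamma$-descendants lie in~$D$, using a radius that depends only on~$\Gamma$ and~$D$; then declare $p \til q$ whenever their profiles agree and $p - q$ lies in a fixed face of~$Q$. The noetherian property of congruences on finitely generated commutative monoids, the same ingredient behind Proposition~\ref{p:witness}, would ensure finiteness of the orbit set. Combined with Proposition~\ref{p:monomPrimary}, the intersection of~$W$ with each orbit would then be a translate of an affine semigroup, yielding the desired affine stratification.

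The hardest step will be verifying that the local strategic profile genuinely controls global winning status, that is, that the recursion defining~$W$ factors through~$Q/\til$. In the squarefree case this reduces to the parity argument of Example~\ref{e:squarefree}; in general, long-range propagation of the defeats in~$D$ through $\Gamma$-chains can create subtle interference patterns that a purely local profile may fail to capture. A plausible route around this is to filter the module $\kk[Q]/\langle\ttt^p \mid p \in L\rangle$ by $\Gamma$-height and prove that the associated graded object is finitely generated as a module over an appropriate affine semigroup ring, so that Theorem~\ref{t:mesodecomp} applies and furnishes the stratification by mesoprimary components. The main obstacle is that~$L$ is defined purely recursively rather than as the exponent set of a binomial or monomial ideal, so the first order of business is to construct an algebraic wrapper for~$L$ to which the machinery of Sections~\ref{s:affine}--\ref{s:decomp} can be applied; without such a wrapper, the noetherianity needed for finiteness of the stratification has no obvious source.
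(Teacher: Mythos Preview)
The statement is a \emph{conjecture}: the paper offers no proof, only the partial result Theorem~\ref{t:misere}, which establishes the conjecture under the additional hypothesis that the mis\`ere quotient is finite.  So there is no ``paper's own proof'' to compare against; the relevant benchmark is the mis\`ere-quotient argument behind Theorem~\ref{t:misere}.

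Your proposal is, in essence, a rediscovery of that argument with the finiteness hypothesis removed.  The congruence you seek---one in which $p \sim q$ forces $p \in W \iff q \in W$---is exactly what Definition~\ref{d:misere} provides: indistinguishability is the \emph{finest} congruence with that property, and Theorem~\ref{t:misere} extracts an affine stratification from the fibers of $Q \to \oQ$ whenever $\oQ$ is finite.  Your ``local strategic profile'' congruence, built from bounded-radius data about~$D$, would necessarily be coarser than or incomparable with indistinguishability.  The difficulty you flag as ``the hardest step'' is genuine and, in the form stated, cannot be overcome: mis\`ere quotients are known to be infinite for many lattice games (this is precisely why Conjecture~\ref{c:affineStrat} is open rather than a corollary of Theorem~\ref{t:misere}), and an infinite mis\`ere quotient witnesses that \emph{no} bounded-radius local profile determines membership in~$W$.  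Long-range propagation through $\Gamma$-chains is not a subtlety to be worked around; it is the entire content of the problem.

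Your fallback algebraic route also has a structural gap.  The set~$L$ of losing positions is not the exponent set of a monomial ideal: $L$ is not closed under addition by~$Q$, so $\<\ttt^p \mid p \in L\>$ has exponent set $L + Q$, which strictly contains~$L$ in general, and its standard monomials bear no direct relation to~$W$.  Thus the module $\kk[Q]/\<\ttt^p \mid p \in L\>$ does not encode the winning/losing dichotomy, and filtering it by $\Gamma$-height does not bring Theorem~\ref{t:mesodecomp} to bear on~$W$.  Any algebraic wrapper for~$W$ or~$L$ must first explain why these recursively defined sets satisfy \emph{some} finiteness condition---and that is the conjecture itself.
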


Roughly speaking, winning positions should be finite unions of sets of
the form (lattice $\cap$ cone).  This definition of affine
stratification differs from \cite[Definition~8.6]{latticeGames} but is
equivalent \cite[Theorem~2.6]{affineStrat}; it would also be
equivalent to require the union to be disjoint, or (independently of
disjointness) the affine semigroups to~be~normal.

\begin{example}\label{e:nim2''}
Consider again the situation from Examples~\ref{e:nim2}
and~\ref{e:nim2'}.  An affine stratification for this game is $W =
2\NN^2$; that is, the entire set of winning positions forms an affine
semigroup.  In mis\`ere play, $W = \big((1,0) + \NN(2,0)\big) \cupdot
\big((0,2) + 2\NN^2\big)$ is the disjoint union of $W_1 = 1 + 2\NN$
(along the first axis) and $W_2$, which equals the translate by twice
the second basis vector of the affine semigroup~$2\NN^2$.
\end{example}

\begin{remark}
Conjecture~\ref{c:affineStrat} bears a resemblance to statements about
local cohomology of finitely generated $\ZZ Q$-graded modules~$M$ over
an affine semigroup ring~$\kk[Q]$ with support in a monomial ideal:
the local cohomology $H^i_I(M)$ is supported on a finite union of
translates of affine semigroups \cite{injAlg}.  If
Conjecture~\ref{c:affineStrat} is true, then perhaps it would be
possible to develop a homological theory for winning positions in
combinatorial games that explains why.
\end{remark}

\begin{thm}\label{t:strat}
A rational strategy can be efficiently computed from any given affine
stratification.
\end{thm}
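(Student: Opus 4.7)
The plan is to translate an affine stratification directly into a short rational generating function using the Barvinok--Woods algorithmic theory of short rational functions for lattice points in polyhedra.

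First, I would use the equivalence of formulations of affine stratification from \cite[Theorem~2.6]{affineStrat} mentioned just after Conjecture~\ref{c:affineStrat} to rewrite the given stratification as a disjoint union $W = \bigsqcup_{i=1}^r (u_i + Q_i)$ in which each $Q_i$ is a normal affine semigroup, say $Q_i = C_i \cap \ZZ^d$ for a rational polyhedral cone~$C_i$. Next, I would apply Barvinok's algorithm to each cone~$C_i$ to compute a short rational function representing $f_{Q_i}(\ttt) = \sum_{v \in Q_i} \ttt^v$; this is a polynomial-time computation in fixed dimension~$d$. Multiplication by the monomial $\ttt^{u_i}$ translates this into a short rational function for $f_{u_i+Q_i}$, and summing over the disjoint strata yields
\[
  f_W(\ttt) \;=\; \sum_{i=1}^r \ttt^{u_i}\, f_{Q_i}(\ttt),
\]
which remains short because shortness is preserved under a bounded number of monomial shifts and additions.

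The main obstacle is verifying that the reduction in the first step is itself efficient: the existence of a disjoint, normal form of an affine stratification is guaranteed by the cited equivalence, but one must check that the transformation can be carried out without substantially inflating the number~$r$ of strata or the bit-complexity of the data defining each~$Q_i$. If this reduction turns out to be expensive in a particular input format, an alternative route is to skip it entirely and instead apply the full Barvinok--Woods toolkit of Boolean operations (union, intersection, complement) on sets presented by short rational functions; each such operation runs in polynomial time in fixed dimension, so a number of operations bounded by the original number of strata suffices to assemble~$f_W$ from the pieces $\ttt^{u_i} f_{Q_i}(\ttt)$, again preserving shortness. Either route delivers the claimed efficient rational strategy, and the efficiency statement of Theorem~\ref{t:rational} then applies verbatim to the output.
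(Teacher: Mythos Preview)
Your proposal is correct and takes essentially the same approach as the paper, whose proof is simply a reference to \cite[\S5]{algsCGT} together with the remark that the result is a ``reasonably straightforward'' application of the Barvinok--Woods methods from~\cite{bw03}. One minor technical slip: a normal affine semigroup~$Q_i$ is $C_i \cap \ZZ Q_i$, not $C_i \cap \ZZ^d$ in general, but Barvinok's algorithm handles lattice points with respect to any specified sublattice after a linear change of coordinates, so this does not affect your argument.
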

\begin{proof}
See \cite[\S5]{algsCGT}.  As with Theorem~\ref{t:rational}, this is
reasonably straightforward, applying the methods from \cite{bw03} with
care.
\end{proof}

Algorithms for dealing with affine stratifications and rational
strategies are stepping stones toward a higher aim, which would be to
prove the existence of affine stratifications
(Conjecture~\ref{c:affineStrat}), and hence rational strategies
(Conjecture~\ref{c:latticeGames}), in an efficient algorithmic manner
and in enough generality for \dawson\ (Open Problem~\ref{o:dawson}).
Part of the problem to overcome for \dawson\ is the need to deal with
increasing heap size~$d$.  That problem is key, since the algorithm
for \dawson\ is supposed to be polynomial in~$d$ as $d \to \infty$,
but affine stratifications and rational strategies at present are
designed for fixed~$d$.

\subsection{Mis\`ere quotients}\label{s:misere}

The development of lattice games and rational strategies outlined in
the previous subsections were motivated by---and continue to take cues
from---exciting recent advances in mis\`ere theory by Plambeck and
Siegel \cite{plambeck05, misereQuots} pertaining to mis\`ere quotients
(Definition~\ref{d:misere}).  The lattice point methods are also
beginning to return the favor, spawning new effective methods for
mis\`ere quotients.  This final subsection on combinatorial games ties
together the lattice point perspectives on games and binomial ideals
with mis\`ere quotients, particularly in Theorem~\ref{t:misere}.

\begin{defn}\label{d:misere}
Fix a lattice game with winning positions $W \subseteq Q$ in a pointed
normal affine semigroup.  Two positions $p,q \in Q$ are
\emph{indistinguishable}, written $p \sim q$,~if
$$%
  (p + Q) \cap W - p = (q + Q) \cap W - q.
$$
In other words, $p + r \in W \iff q + r \in W$ for all $r \in Q$.  The
\emph{mis\`ere quotient} of~$G$ is the quotient $\oQ = Q/\til$ of the
affine semigroup~$Q$ modulo indistinguishability.
\end{defn}

Geometrically, indistinguishability means that the winning positions
in the cone above~$p$ are the same as those above~$q$, up to
translation by $p-q$.
$$%
\psfrag{Q}{\small$Q$}
\psfrag{0}{\small$0$}
\psfrag{p}{\small{\color{darkgreen}$p$}}
\psfrag{q}{\small{\color{darkpurple}$q$}}
\psfrag{pvsq}{\small{\color{darkgreen}$p + r$} vs.\ {\color{darkpurple}$q + r$}}
\begin{array}{@{}c@{}}\\[-2.3ex]\includegraphics{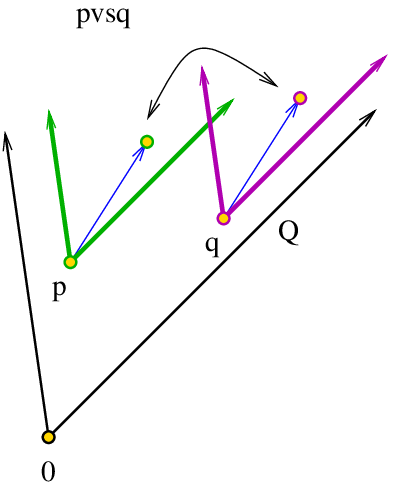}\end{array}
$$

\begin{lemma}
Indistinguishability is a congruence in the sense of
Definition~\ref{d:congruence}, so the mis\`ere quotient~$\oQ$ is a
monoid.\qed
\end{lemma}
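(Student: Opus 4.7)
The plan is to verify the two requirements of Definition~\ref{d:congruence}: that $\sim$ is an equivalence relation, and that it is additively closed. The whole proof is an unwinding of the set-theoretic definition; the only small preliminary is to recast indistinguishability in a pointwise form that is easier to manipulate than the set equation appearing in Definition~\ref{d:misere}.

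The first step is to observe that for any $p \in Q$, the set $(p+Q) \cap W - p$ equals $\{r \in Q : p + r \in W\}$, since the elements of $(p+Q) \cap W$ are precisely the elements of the form $p + r$ with $r \in Q$ and $p + r \in W$. Hence $p \sim q$ holds if and only if, for every $r \in Q$, one has $p + r \in W \iff q + r \in W$. This pointwise reformulation makes the equivalence relation axioms immediate: reflexivity is trivial; symmetry follows from the symmetry of ``$\iff$''; and transitivity follows by chaining two biconditionals for each fixed $r \in Q$.

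For additive closure, fix $p \sim q$ and $w \in Q$; I claim $p + w \sim q + w$. Using the pointwise form, I need to show that for every $r \in Q$, $(p + w) + r \in W \iff (q + w) + r \in W$. Since $Q$ is a monoid, $w + r \in Q$, so the assumption $p \sim q$, applied with $s = w + r \in Q$, yields $p + s \in W \iff q + s \in W$, which is precisely the desired biconditional after reassociating.

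There is no real obstacle here; the only minor pitfall is bookkeeping with the notation ``$S - p$'' for a set shift. The final sentence of the lemma, that the quotient $\oQ = Q/{\sim}$ is then a monoid, follows immediately from Definition~\ref{d:congruence} and the lemma stated just after it, so no further argument is needed.
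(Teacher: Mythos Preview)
Your proof is correct. The paper gives no proof at all---the \qed immediately follows the statement---so you have simply written out the routine verification that the paper leaves to the reader; there is nothing to compare.
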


\begin{example}\label{e:squarefree'}
In the situation of Example~\ref{e:squarefree}, the mis\`ere quotient
is a quotient of $(\ZZ/2\ZZ)^d = \NN^d/2\NN^d$
\cite[Proposition~6.8]{latticeGames}.  As with
Example~\ref{e:squarefree}, the reader is encouraged to reconcile this
statement with Example~\ref{e:nim}.
\end{example}

\begin{example}\label{e:nim2quotient}
For \nim$_2$ (Examples~\ref{e:nim2}, \ref{e:nim2'},
and~\ref{e:nim2''}), the mis\`ere quotient is the commutative monoid
with presentation $\oQ = \<a,b \mid a^2 =\nolinebreak 1, b^3
=\nolinebreak b\>$, in multiplicative notation.  This monoid has six
elements because it is $\<a \mid a^2 = 1\> \times \<b \mid b^3 = b\>$,
and the second factor has order~$3$.  The presentation of~$\oQ$ can be
seen geometrically in the right-hand figure from Example~\ref{e:nim2}:
translating the grid two units to the right moves $W$ bijectively to
the part of~$W$ outside of the leftmost two columns (this is $a^2 =
1$), and translating the grid up by two units takes the part of~$W$
above the first row bijectively to the part of~$W$ above the third row
(this is $b^3 = b$).
\end{example}

Mis\`ere quotients were introduced by Plambeck \cite{plambeck05} as a
less stringent way to collapse the set of games than had been proposed
earlier by Grundy and Smith \cite{grundy-smith}, in view of Conway's
proof that very little simplification results when the collapsing is
attempted in too large a universe of games \cite[Theorem~77]{ONAG}.
Mis\`ere quotients have subsequently been studied and applied to
computations by Plambeck and Siegel \cite{misereQuots, siegel07}, the
point being that taking quotients often leaves a much smaller---and
sometimes finite---set of positions to consider, when it comes to
strategies.  The Introduction of \cite{misereQuots} contains an
excellent account of the history, including personal accounts from
some of the main~players.

The first contribution of lattice games to mis\`ere theory is the
following.

\begin{prop}
A short rational strategy for a game played on~$Q$ results in an
efficient algorithm for determining the indistinguishability of any
pair of positions in~$Q$.  In particular, an affine stratification
results in such an algorithm.
\end{prop}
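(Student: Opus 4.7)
The plan is to reduce indistinguishability between positions $p,q \in Q$ to an equality test between short rational generating functions, whose ingredients are all polynomial-time computable via the Barvinok--Woods calculus \cite{bw03}. By Definition~\ref{d:misere}, $p \sim q$ if and only if for every $r \in Q$ one has $p + r \in W \iff q + r \in W$; equivalently, the sets
\[
  W_p = \{r \in Q \mid p + r \in W\}
  \quad\text{and}\quad
  W_q = \{r \in Q \mid q + r \in W\}
\]
coincide. So it suffices to produce short rational generating functions $f_{W_p}$ and $f_{W_q}$ and to decide whether $f_{W_p} - f_{W_q} = 0$.

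First I would build $f_{W_p}$ from the given short rational strategy $f_W(\ttt)$. Since $Q$ is a pointed normal affine semigroup, Barvinok's theorem provides a short rational function $f_Q(\ttt)$. The Hadamard product $f_W(\ttt) \star \bigl(\ttt^p f_Q(\ttt)\bigr)$ enumerates $(p + Q) \cap W$, and remains short because short rational functions are closed under Hadamard product \cite{bw03}. Multiplying by the Laurent monomial $\ttt^{-p}$ shifts this down to~$W_p$, yielding a short rational expression for $f_{W_p}$; exactly the same recipe yields~$f_{W_q}$.

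Next I would zero-test the difference $f_{W_p} - f_{W_q}$. This function is again short rational, and zero-testing is polynomial time: one can, for instance, apply the Barvinok--Woods counting algorithm to the support of the difference, which is empty precisely when the function vanishes. Every step in the pipeline---constructing $f_Q$, Hadamard product, monomial shift, subtraction, zero-test---is polynomial time in the encoding sizes of $p$, $q$, and $f_W$, so the composite algorithm is efficient. For the ``in particular'' clause, Theorem~\ref{t:strat} efficiently converts an affine stratification into a short rational strategy, so the first half of the proposition immediately supplies an efficient indistinguishability algorithm from any affine stratification.

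The main obstacle, and essentially the only content beyond routine bookkeeping, will be verifying that the standard Barvinok--Woods subroutines apply cleanly in this setting: specifically, that the pointed normal affine semigroup $Q$ (rather than an orthant) is admissible as the ambient polyhedron for the Hadamard product step, and that the Laurent shift by $\ttt^{-p}$ can be performed in a common ambient ring where the zero-test is justified. These are the same adaptations already used in \cite{algsCGT} to prove Theorem~\ref{t:rational}, so the argument reduces to a short appeal to that machinery with careful citation.
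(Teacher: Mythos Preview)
Your proposal is correct and follows essentially the same approach as the paper: the paper's proof is little more than a citation to \cite[\S6]{algsCGT} together with the single hint that the Hadamard product of short rational functions is efficiently computable as a short rational function \cite{bw03}, which is precisely the engine driving your algorithm. Your writeup is in fact more detailed than the paper's, spelling out the translation $W_p = \ttt^{-p}\bigl(f_W \star (\ttt^p f_Q)\bigr)$ and the zero-test explicitly, and your ``in particular'' via Theorem~\ref{t:strat} is exactly right.
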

\begin{proof}
This is the main result in \cite[\S6]{algsCGT}.  If $f = \sum_{q \in
Q} \phi_q\ttt^q$ and $g = \sum_{q \in Q} \psi_q\ttt^q$ are two short
rational functions, then their \emph{Hadamard product} $f \star g =
\sum_{q \in Q} \phi_q\psi_q\ttt^q$ can be efficiently computed as a
short rational function \cite{bw03}.
\end{proof}

The final result in this section combines three of the main themes
thus far in the survey: lattice games, mis\`ere quotients, and---for
the proof---binomial combinatorics.

\begin{thm}\label{t:misere}
Lattice games with finite mis\`ere quotients have affine
stratifications.
\end{thm}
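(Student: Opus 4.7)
The plan is to leverage two observations: $W$ is saturated under the indistinguishability congruence, and finiteness of $\oQ$ forces periodicity on~$Q$. Setting $r = 0 \in Q$ in Definition~\ref{d:misere} gives $p \sim q \implies (p \in W \iff q \in W)$, so $W$ is the union of finitely many fibers of the canonical projection $\pi\colon Q \onto \oQ$. It therefore suffices to exhibit an affine stratification for a single fiber $\pi^{-1}(\bar c)$ with $\bar c \in \oQ$.

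The core is a periodicity argument. Fix generators $a_1,\ldots,a_n$ of~$Q$. Since $\oQ$ is finite, each sequence $(m \bar a_i)_{m \geq 0}$ is eventually periodic, so there exist integers $N$ and $k > 0$ (uniform in $i$) with $m a_i \sim (m + k) a_i$ for all $m \geq N$. Consequently, with $L := k\ZZ a_1 + \cdots + k\ZZ a_n = k \ZZ Q$, translation by any element of~$L$ preserves $\sim$-classes as long as both endpoints lie in the inner translate $Q' := q_0 + Q$, where $q_0 = N(a_1 + \cdots + a_n)$. Restricted to~$Q'$, the map $\pi$ thus factors through the finite quotient $Q'/L$, so $\pi^{-1}(\bar c) \cap Q'$ is a finite union of sets of the form $(v + L) \cap Q'$, each of which (when nonempty) is a translate of a normal affine semigroup---it is the set of lattice points on a coset of a full-rank sublattice within a pointed cone.

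The complement $Q \smallsetminus Q'$ is covered by finitely many slabs of the form $B_F + F$, indexed by the proper faces $F \subsetneq Q$, with each $B_F \subset \ZZ Q$ finite; geometrically, removing the translate $q_0 + Q$ from~$Q$ leaves only a bounded neighborhood of the boundary of the cone. Each face $F$ is itself a pointed normal affine semigroup of strictly smaller rank, and $\pi|_F$ lands in the (still finite) submonoid $\pi(F) \subseteq \oQ$. Induction on the rank of~$Q$ furnishes affine stratifications for the fibers of $\pi|_F$, and translating by the finite sets $B_F$ and combining with the interior stratification yields an affine stratification of $\pi^{-1}(\bar c)$, hence of~$W$.

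The main obstacle is the combinatorial bookkeeping: one must verify that the slab cover of $Q \smallsetminus Q'$ has the asserted form and that the interior stratification, together with its inductive boundary counterparts, assembles into a \emph{finite} union of translates of affine semigroups without gaps or doubling. This parallels the semifree-action combinatorics of Theorem~\ref{t:monPrimary}, and no conceptual obstruction is expected beyond a careful organization of faces, coset representatives, and offsets.
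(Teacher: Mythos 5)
Your argument is a genuinely different route from the paper's.  The paper dispatches the theorem by citing \cite[Corollary~4.5]{affineStrat}, which rests on the general result \cite[Theorem~3.1]{affineStrat} that fibers of \emph{any} monoid surjection $Q \onto \oQ$ from a normal affine semigroup possess affine stratifications; that general result is established via combinatorial mesoprimary decomposition of congruences and does not require $\oQ$ to be finite.  You instead exploit finiteness of~$\oQ$ directly: eventual periodicity of the sequences $(m\ol{a}_i)_{m\geq 0}$ in the finite monoid gives a full-rank lattice $L = k\ZZ Q$ whose translations preserve indistinguishability deep inside the cone, reducing the interior stratification to modular arithmetic.  This is more elementary and self-contained, and the core periodicity deduction (that $p \sim p + \ell$ whenever $p, p+\ell \in Q' = q_0 + Q$ and $\ell \in L$, obtained by splitting $\ell = k\ell_+ - k\ell_-$ with $\ell_\pm \in Q$ and using additivity of~$\sim$) is sound.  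The trade-off is that your argument buys nothing for infinite~$\oQ$, whereas the machinery the paper invokes does.

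There is, however, a concrete error in the interior step.  You assert that each nonempty $(v+L) \cap Q'$ is \emph{a} translate of a normal affine semigroup because it is ``the set of lattice points on a coset of a full-rank sublattice within a pointed cone.''  That is false in general.  Take $C = \RR_{\geq 0}\{(1,0),(1,2)\}$, $L = 2\ZZ^2$, $q_0 = 0$, and $v = (1,0)$; then translating $(v+L)\cap Q$ by its unique minimal element $(1,0)$ yields $\{(a',b') \in 2\ZZ^2 : 0 \leq b' \leq 2a'+2\}$, which strictly contains $L \cap C$ and is not closed under addition, hence is not a translate of any monoid.  What \emph{is} true---and suffices for you---is that the lattice points of a coset of a full-rank sublattice inside a shifted pointed rational cone form a \emph{finite disjoint union} of translates of normal affine semigroups (triangulate the cone and use the half-open parallelepiped decomposition of each simplicial piece).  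With that correction the interior half of the argument goes through.  The boundary half remains real work: writing $Q \smallsetminus Q'$ as a finite union of sets $(\alpha + \ZZ F)\cap Q$ over faces~$F$ is exactly what primary decomposition of the monomial ideal $\<\ttt^{q_0}\>$ in~$\kk[Q]$ gives (Proposition~\ref{p:monomPrimary}), and then one must observe that $\{f \in F : \pi(\alpha + f) = \ol{c}\}$ is a union of finitely many fibers of $\pi|_F$ (by additivity of~$\sim$) before the rank induction applies; you gesture at this but do not set it up, and it is precisely this bookkeeping that \cite{affineStrat} organizes via mesoprimary decomposition.
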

\begin{proof}
This is \cite[Corollary~4.5]{affineStrat}, given that we are working
with games played on normal affine semigroups.  The proof proceeds via
a general result \cite[Theorem~3.1]{affineStrat} of interest here: the
fibers of any projection $Q \to \oQ$ from an affine semigroup~$Q$ to a
monoid~$\oQ$ all possess affine stratifications.  This is proved using
combinatorial mesoprimary decompositions of congruences---the
(simpler) monoid analogues of binomial mesoprimary
decomposition---whose combinatorics, as in
Example~\ref{e:mesoprimary}, gives rise to affine stratifications of
fibers.  When the mis\`ere quotient of a lattice game is finite, the
winning positions automatically comprise a finite union of~fibers.
\end{proof}

\begin{prob}
Find an algorithm to compute the mis\`ere quotient of any lattice game
starting from an affine stratification.
\end{prob}

Algorithms for computing finite mis\`ere quotients are known and
useful \cite[Appendix~A]{misereQuots}.  In addition, Weimerskirch has
algorithmic methods that apply in the presence of certain known
periodicities \cite{weimerskirch}, although for infinite quotients the
methods fail to terminate.

Binomial primary decomposition, or at least the combinatorial aspects
present in mesoprimary decomposition of congruences on monoids, is
likely to play a role in further open questions on mis\`ere quotients,
including when finite quotients occur, and more complex ``algebraic
periodicity'' questions, which have yet to be formulated
precisely~\cite[Appendix~A.5]{misereQuots}.

\section{Mass-action kinetics in chemistry}\label{s:chem}

Toss some chemicals into a vat.  Stir.  What products are produced?
How fast?  If the process is repeated, can the result differ?  These
questions belong to the study of chemical reaction dynamics.  One of
the earliest theories of such dynamics, the law of mass action, was
formulated by Guldberg and Waage in 1864 \cite{GW1864}.  It is widely
observed to hold in real-life chemical systems (as distinguished from,
say, biochemical systems; see Remark~\ref{r:fails}).

Over the years, mass-action kinetics has matured, especially certain
mathematical aspects following seminal work by Horn, Jackson, and
Feinberg \cite{HJ72,feinberg87} from the 1970s and onward.  In the
past decade, the resulting mathematical formalizations have seen
increasing amounts of algebra, particularly of the binomial sort.
This section provides a brief overview of mass-action kinetics
(Section~\ref{s:mass-action}), covering just enough basics to
understand the relevance of binomial algebra.  {}From there, the main
goal is to explain the Global Attractor Conjecture
(Section~\ref{s:gac}), which posits that a system of reversible
chemical reactions always reaches the same steady state if one exists,
with a view to how binomial primary decomposition could be relevant to
its solution.

Length constraints prevent many substantial details, as well as
examples demonstrating key phenomena, from being included.  For an
elementary introduction to chemical reaction network theory, in
mathematical language, the reader is referred to the well-written
notes by Gunawardena \cite{gun03}.  For details on an abstract
formalization of the law of mass-action in terms of binomials, see
\cite{mass-actionReview}.

\subsection{Binomials from chemical reactions}\label{s:mass-action}

Before presenting mass-action kinetics in general, it is worthwhile to
study a small sample reaction.

\begin{example}\label{e:peroxide}
Consider the breakdown of hydrogen peroxide into water and oxygen:
$$%
  2\peroxide
  \reaction\lambda\mu
  2\water + \oxygen
$$
The $\lambda$ and~$\mu$ here are \emph{rate constants}: $\lambda$
indicates that two molecules of peroxide decompose into two molecules
of water and one molecule of oxygen at some rate, and $\mu$ indicates
that the reverse reaction also occurs, though at another (in this
case, slower) rate: two molecules of water and one molecule of oxygen
react to from two molecules of peroxide.  To be precise, let $x =
[\peroxide]$, $y = [\water]$, and $z = [\oxygen]$ be the
concentrations of peroxide, water, and oxygen in some medium.  These
concentrations are viewed as functions of time, and as such, they
satisfy a system of ordinary differential equations:
\begin{align*}
  \dot x &= 2\mu y^2 z - 2\lambda x^2
\\
  \dot y &= 2\lambda x^2 - 2\mu y^2 z
\\
  \dot z &= \lambda x^2 - \mu y^2 z.
\end{align*}
The right-hand side of $\dot x$ says that after an infinitesimal unit
of time,
\begin{itemize}
\item%
for every two molecules of $\peroxide$ that came together (this is the
$x^2$ term), two molecules of $\peroxide$ disappear (this is the $-2$
in the
coefficient of~$x^2$) some fraction of the time (this is the meaning
of~$\lambda$ in the coefficient of~$x^2$); and
\item%
for every two $\water$ molecules and one~$\oxygen$ that came together
(the $y^2z$ term), two molecules of $\peroxide$ are formed (the $2$
on~$y^2z$) some fraction of the time~($\mu$).
\end{itemize}
The fact that $x$ and~$y$ are squared in all of the right-hand sides
is for the same reason that $y^2$ is multiplied by~$z$: the products
represent concentrations of chemical complexes.  Thus $y^2z$ can be
thought of as an ``effective concentration'' of $2\water + \oxygen$.
\end{example}

\begin{example}
For comparison, it is instructive to see what happens when a new
species is introduced to the chemical equation.  Suppose that the
reaction $2\peroxide \rightleftharpoons 2\water + \oxygen$ in
Example~\ref{e:peroxide} had a fictional additional term on the
right-hand side:
$$%
  2\peroxide
  \reaction\lambda\mu
  2\water + \oxygen + {\color{red}3A}
$$
The right-hand sides of the equations governing the evolution of the
species $\peroxide$, $\water$, and $\oxygen$ would remain binomial:
\begin{align*}
  \dot x &= 2\mu y^2 z {\color{red}a^3} - 2\lambda x^2
\\
  \dot y &= 2\lambda x^2 - 2\mu y^2 z {\color{red}a^3}
\\
  \dot z &= \lambda x^2 - \mu y^2 z {\color{red}a^3}
\end{align*}
as would the new equation ${\color{red}\dot a} = 3\lambda x^2 - 3\mu
y^2 z {\color{red}a^3}$ governing the evolution of the species~$A$.
\end{example}

In general, a chemical reaction involves \emph{species}
$s_1,\ldots,s_n$ with corresponding \emph{concentrations} $[s_i] =
x_i$, each viewed as a function $x_i = x_i(t)$ of time.  In
Example~\ref{e:peroxide}, the species are peroxide, water, and oxygen.
A reaction $A \rightleftharpoons B$ occurs between \emph{chemical
complexes} $A = a_1 s_1 + \cdots + a_n s_n$ and $B = b_1 s_1 + \cdots
+ b_n s_n$.  Thus the complex $A$ is composed of $a_i$ molecules
of~$s_i$ for $i = 1,\ldots,n$, and similarly for~$B$.  In
Example~\ref{e:peroxide}, the complexes are $\peroxide$ and $2\water +
\oxygen$.

\begin{defn}\label{d:mass-action}
A reaction $A \rightleftharpoons B$ between complexes $A = a_1 s_1 +
\cdots + a_n s_n$ and $B = b_1 s_1 + \cdots + b_n s_n$ evolves under
\emph{mass action kinetics} \cite{GW1864} if species $s_i$ is lost at
$a_i$ times a rate proportional to the concentration $x_1^{a_1} \cdots
x_n^{a_n}$ of~$A$, and gained at $b_i$ times a rate proportional to
the concentration $x_1^{b_1} \cdots x_n^{b_n}$ of~$B$.  The reaction
is \emph{reversible} if the reaction rates in both directions are
strictly positive.
\end{defn}

As in Example~\ref{e:peroxide}, the concentration of a complex~$A$ is
the product of the species concentrations with exponents corresponding
to the multiplicities of the species in~$A$.

\begin{prop}\label{p:harpoons}
The differential equation governing the evolution of the reversible
reaction $A \rightleftharpoons B$ from Definition~\ref{d:mass-action}
under mass-action kinetics is
$$%
  \dot x_i = (b_i-a_i)(\lambda \xx^\aa - \mu \xx^\bb),
$$
with rate constants $\lambda, \mu > 0$.  In vector form, with $\lambda
= \lambda_{\aa\bb}$ and $\mu = \lambda_{\bb\aa}$, this becomes
$$%
  \dot\xx = (\bb-\aa)(\lambda_{\aa\bb}\xx^\aa - \lambda_{\bb\aa}\xx^\bb)
$$
\end{prop}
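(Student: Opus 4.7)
The plan is a direct bookkeeping computation from Definition~\ref{d:mass-action}, taking advantage of the fact that each direction of the reaction contributes independently and that both contributions are scalar multiples of the same stoichiometric vector $\bb-\aa$.

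First I would isolate the contribution of the forward reaction $A \to B$ with rate constant $\lambda = \lambda_{\aa\bb}$. By the definition, the rate at which this reaction ``fires'' is proportional to the concentration $\xx^\aa = x_1^{a_1}\cdots x_n^{a_n}$ of the complex~$A$; with proportionality constant~$\lambda$, this rate is $\lambda\xx^\aa$. Each firing consumes $a_i$ units of species~$s_i$ and produces $b_i$ units, so the net contribution to $\dot x_i$ from this direction is $(b_i - a_i)\lambda\xx^\aa$.

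Next I would do the symmetric computation for the reverse reaction $B \to A$ with rate constant $\mu = \lambda_{\bb\aa}$. Its rate is $\mu\xx^\bb$, and each firing consumes $b_i$ units of~$s_i$ and produces $a_i$ units, contributing $(a_i - b_i)\mu\xx^\bb = -(b_i - a_i)\mu\xx^\bb$ to $\dot x_i$. Adding the two contributions gives
\[
\dot x_i \;=\; (b_i - a_i)\lambda\xx^\aa - (b_i - a_i)\mu\xx^\bb \;=\; (b_i-a_i)\bigl(\lambda\xx^\aa - \mu\xx^\bb\bigr),
\]
which is the scalar formula in the statement. Assembling the coordinates into a vector, the common factor $\lambda\xx^\aa - \mu\xx^\bb$ pulls out and the remaining vector of coefficients is exactly $\bb - \aa$, yielding the displayed vector form $\dot\xx = (\bb-\aa)(\lambda_{\aa\bb}\xx^\aa - \lambda_{\bb\aa}\xx^\bb)$.

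There is no serious obstacle: the proposition is essentially an unpacking of the definition, and the only subtle point is sign-tracking, namely that ``$s_i$ is lost at $a_i$ times'' contributes with a minus sign to $\dot x_i$ whereas ``gained at $b_i$ times'' contributes with a plus sign, so that the forward direction produces $+(b_i-a_i)\lambda\xx^\aa$ and the reverse produces $-(b_i-a_i)\mu\xx^\bb$. Once that is handled, the factorization that makes the binomial $\lambda\xx^\aa - \mu\xx^\bb$ appear is automatic, and it is this binomial structure, uniform across all species~$s_i$, that is the whole point of the~proposition.
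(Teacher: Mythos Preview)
Your proof is correct and takes exactly the paper's approach: the paper's own proof is the single sentence ``This is merely a translation of Definition~\ref{d:mass-action} into symbols,'' and your argument is precisely that translation carried out in detail, with the sign-tracking made explicit.
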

\begin{proof}
This is merely a translation of Definition~\ref{d:mass-action} into
symbols.
\end{proof}

The factor of $\lambda \xx^\aa - \mu \xx^\bb$ in
Proposition~\ref{p:harpoons} is a scalar quantity; the only vector
quantity on the right-hand side is $\bb - \aa$.

A single reaction under mass-action kinetics reaches a steady state
when the binomial on the right-hand side of its evolution equation
vanishes.  Thus the set of steady states for a single reaction is the
zero set of a binomial.  General reaction systems involve more than
one reaction at a time: in a given vat of chemicals, simultaneous
transformations take place involving different pairs of chemical
complexes using the given set of species in the vat.

\begin{defn}\label{d:multiple}
For multiple reactions on a set of species, in which each reaction $A
\rightleftharpoons B$ involves complexes with species vectors $\aa =
(a_1,\ldots,a_n)$ and~$\bb = (b_1,\ldots,b_n)$, the \emph{law of
mass-action} is obeyed if the species evolve according to the
binomial~sum:
$$%
  \dot\xx = \sum_{A \rightleftharpoons B}
  (\bb-\aa)(\lambda_{\aa\bb}\xx^\aa - \lambda_{\bb\aa}\xx^\bb).
$$
\end{defn}

Thus, when more than one reaction is involved, the $i^\th$ entry of
the vector field on the right-hand side is not a binomial but a sum of
binomials, one for each reaction in which species~$s_i$ occurs.
Consequently, the set of steady states need not be
binomial~\cite{balancing}.

\begin{defn}\label{d:balanced}
A point $\xi = (\xi_1,\ldots,\xi_n) \in \RR^n$ is a \emph{detailed
balanced equilibrium} for a reversible reaction system if it is
\emph{strictly positive}, meaning that $\xi_i > 0$ for all~$i$, and
every binomial summand on the right-hand side in
Definition~\ref{d:multiple} vanishes at~$\xi$.  A~system is
\emph{detailed balanced} if it is reversible and has a detailed
balanced equilibrium.
\end{defn}

Definition~\ref{d:balanced} creates the bridge from chemistry to
binomial algebra: the chemical interest lies in equilibria, and these
are varieties of binomial ideals.  Detailed balanced equilibria lie
interior to the positive orthant in~$\RR^n$.  For such concentrations
of the species, each reaction $A \rightleftharpoons B$ in the system
rests at equilibrium with both $A$ and~$B$ present at some nonzero
concentration.  In fact, more is true.

\begin{thm}\label{t:lyapunov}
A detailed balanced equilibrium is a locally attracting steady state.
\end{thm}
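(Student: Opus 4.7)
The plan is to exhibit a strict Lyapunov function centered at~$\xi$ and then invoke the classical Lyapunov stability theorem. The function is the pseudo-Helmholtz free energy
$$
V(\xx) \;=\; \sum_{i=1}^n \bigl(x_i\log(x_i/\xi_i) - x_i + \xi_i\bigr),
$$
defined on the open positive orthant $\RR^n_{>0}$. Its Hessian is the diagonal matrix with entries $1/x_i$, so $V$ is strictly convex; moreover $V(\xi)=0$ and $V(\xx)>0$ for $\xx\neq\xi$. Note also that the flow preserves the \emph{stoichiometric compatibility class} $\xi+S$, where $S = \mathrm{span}_\RR\{\bb-\aa \mid A \reaction{}{} B\} \subseteq \RR^n$, simply because $\dot\xx$ always lies in~$S$.

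The first step is to compute $\dot V$ along trajectories of the ODE in Definition~\ref{d:multiple}. Since $\partial V/\partial x_i = \log(x_i/\xi_i)$, writing $\log(\xx/\xi)$ for the vector with these entries gives
$$
\dot V \;=\; \Bigl\langle \log(\xx/\xi),\ \sum_{A\reaction{}{}B}(\bb-\aa)\bigl(\lambda_{\aa\bb}\xx^\aa - \lambda_{\bb\aa}\xx^\bb\bigr)\Bigr\rangle.
$$
The detailed balance hypothesis $\lambda_{\aa\bb}\xi^\aa = \lambda_{\bb\aa}\xi^\bb$ lets me name this common value $\kappa_{\aa\bb}>0$ and rewrite $\lambda_{\aa\bb}\xx^\aa = \kappa_{\aa\bb}(\xx/\xi)^\aa$ and likewise for~$\bb$. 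Setting $u_\aa = (\xx/\xi)^\aa = \prod_i(x_i/\xi_i)^{a_i}$ and using $\langle\log(\xx/\xi),\aa\rangle = \log u_\aa$, each summand collapses to
$$
\kappa_{\aa\bb}\bigl(\log u_\bb - \log u_\aa\bigr)\bigl(u_\aa - u_\bb\bigr),
$$
which is $\leq 0$ by monotonicity of $\log$, with equality if and only if $u_\aa = u_\bb$. Hence $\dot V \leq 0$ on all of $\RR^n_{>0}$, and in particular $\dot V(\xi)=0$, confirming that $\xi$ is a steady state.

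Next I would argue that $\xi$ is the \emph{only} zero of $\dot V$ inside $\xi+S$. Indeed $\dot V(\xx)=0$ forces $\langle\log(\xx/\xi),\bb-\aa\rangle=0$ for every reaction, so $\log(\xx/\xi)\in S^\perp$. But $\log(\xx/\xi) = \nabla V(\xx)$, so $\xx$ is a critical point of the restriction $V|_{\xi+S}$; by strict convexity of that restriction it has a unique critical point, namely its minimum at~$\xi$, so $\xx=\xi$. Thus $V|_{\xi+S}$ is a \emph{strict} Lyapunov function: its sublevel sets $\{V\leq c\}\cap(\xi+S)$ are compact neighborhoods of~$\xi$ shrinking to $\{\xi\}$ as $c\downarrow 0$, each forward-invariant under the flow, and $\dot V<0$ strictly off~$\xi$. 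Lyapunov's theorem (or LaSalle's invariance principle applied within the compatibility class) now yields local asymptotic stability of~$\xi$.

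The main obstacle is the bookkeeping in the key algebraic identity that pools the two summands $\lambda_{\aa\bb}\xx^\aa$ and $\lambda_{\bb\aa}\xx^\bb$ into a single product of the form $(\log u_\bb - \log u_\aa)(u_\aa - u_\bb)$; everything else is either strict convexity of $V$ or standard Lyapunov theory. A subtlety worth noting, but not a genuine obstacle for the \emph{local} statement, is that trajectories starting in $\RR^n_{>0}$ could in principle approach the boundary of the orthant, where $V$ blows up; since we only need attraction from a small neighborhood of the interior point~$\xi$, the compact forward-invariant sublevel sets of $V|_{\xi+S}$ keep trajectories well away from the boundary and the local conclusion follows.
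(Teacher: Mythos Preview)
Your argument is correct and is exactly the approach the paper invokes: its proof merely cites Horn--Jackson and Feinberg for an explicit strict Lyapunov function (the Helmholtz free energy), and you have supplied that computation in full. One inconsequential slip: $V$ does not in fact blow up on the boundary of the orthant---each summand $x_i\log(x_i/\xi_i)-x_i+\xi_i$ tends to $\xi_i$ as $x_i\to 0^+$---but your local argument via small compact sublevel sets near~$\xi$ is unaffected.
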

\begin{proof}
The main point is that a detailed balanced equilibrium possesses an
explicit strict Lyapunov function (given by Helmholtz free energy)
\cite{HJ72,feinberg87}.
\end{proof}

\begin{remark}\label{r:crnt}
Chemical reaction network theory (CRNT) works in more general settings
than systems of reactions each of which is reversible; see
\cite{gun03} for an introduction.  The theory is most successful when
the system of reactions is \emph{weakly reversible}: each individual
reaction $A \to B$ need not be reversible, but it must be possible to
reach the reactant complex~$A$ from the product complex~$B$ through a
sequence of reactions in the system.  See also Remark~\ref{r:crnt'}.
\end{remark}

\begin{remark}\label{r:fails}
Mass-action kinetics fails for more complicated chemical systems, such
as biochemical ones.  Indeed, it must fail, for life abhors chemical
equilibrium: an organism whose chemical reactions are at steady-state
is otherwise known as dead.  The failure of mass-action kinetics in
biochemical systems occurs for a number of reasons.  For one, the
reaction medium is not homogeneous---that is, the reactants are not
well-mixed).  In addition, the molecules are often too big, and the
number of them too small, for the natural discreteness to be smoothed;
see \cite[\S2]{gun03}.
\end{remark}

\subsection{Global attractor conjecture}\label{s:gac}

Polynomial dynamical systems---linear ordinary differential equations
with polynomial right-hand sides---behave quite poorly and
unpredictably, in general.  The famous chaotic \emph{Lorenz
attractor}, for example, is defined by a vector field whose entries
are simple $3$-term cubics.  However, the binomial nature of
mass-action chemistry lends a striking tameness to the dynamics.

The reversibility hypothesis for detailed balanced systems is natural
from the perspective of chemistry: every reaction can, in principle,
be reversed (although the activation energy required might be
prohibitive under standard conditions).  Overwhelming experience says
that typical chemical reactions---well-mixed, at constant temperature,
as in chemical manufacturing---approach balanced steady states, and
the same products really do emerge every time.  But this is
surprisingly unknown theoretically for detailed balanced systems under
mass-action kinetics, even though their equilibria are local
attractors by Theorem~\ref{t:lyapunov}.

\begin{conj}[Global Attractor Conjecture \cite{HJ72,horn74}]\label{c:gac}
If a reversible reaction system as in Definition~\ref{d:multiple} has
a detailed balanced equilibrium, then every trajectory starting from
strictly positive initial concentrations reaches it in the~limit.
\end{conj}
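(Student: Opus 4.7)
The plan is to combine the Lyapunov structure underlying Theorem~\ref{t:lyapunov} with the binomial geometry developed in Sections~\ref{s:affine}--\ref{s:decomp}, with the goal of ruling out the only obstruction to global convergence: accumulation of trajectories on the boundary of~$\RR^n_{\geq 0}$.

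First I would fix the hypothesized detailed balanced equilibrium~$\xi$ and form the Helmholtz free energy
$$
  V(\xx) = \sum_{i=1}^n \bigl(x_i\log(x_i/\xi_i) - x_i + \xi_i\bigr),
$$
which is nonnegative and strictly convex on~$\RR^n_{>0}$ and satisfies $\dot V \leq 0$ along any trajectory, with equality exactly at detailed balanced equilibria. Writing~$S$ for the real span of the reaction vectors $\bb-\aa$, the trajectory from $\xx(0)$ is confined to the \emph{stoichiometric compatibility class} $\cL = (\xx(0)+S) \cap \RR^n_{>0}$. A Birch-type theorem---itself a shadow of the toric parametrization of the detailed balanced locus---shows that $\cL$ contains a unique positive detailed balanced equilibrium $\xi^*$, and $V$ is proper on $\ol\cL$ with its unique minimum at $\xi^*$. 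LaSalle's invariance principle then reduces Conjecture~\ref{c:gac} to showing that the $\omega$-limit set $\omega(\xx(0))$ is disjoint from the boundary $\partial\RR^n_{\geq 0}$.

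Second I would apply binomial primary decomposition to classify the possible boundary $\omega$-limit points. The steady-state ideal
$$
  I = \bigl\langle \lambda_{\aa\bb}\xx^\aa - \lambda_{\bb\aa}\xx^\bb \bigm| A \rightleftharpoons B\bigr\rangle \subseteq \CC[x_1,\ldots,x_n]
$$
is binomial, so by Corollary~\ref{c:primDecomp} its associated primes are of the form $I_{\rho,J}$ for a saturated sublattice $L \subseteq \ZZ^J$ and a character~$\rho$. The ``full support'' component $J=\{1,\ldots,n\}$ carries the interior equilibrium~$\xi^*$, while each $J \subsetneq \{1,\ldots,n\}$ produces equilibria supported on the coordinate face indexed by~$J$; the complementary set $Z = \{1,\ldots,n\}\minus J$ in each case is a \emph{siphon} in the reaction-network sense, and Proposition~\ref{p:witness} bundles the finitely many witness characters on each such~$L$ into an explicit finite catalog of toric boundary strata that $\omega(\xx(0))$ might in principle touch.

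Third, I would attempt to rule out accumulation on each such stratum, which is where the real difficulty lies. If $Z$ admits a conservation vector in $S^\perp$ with strictly positive entries supported on $Z$, then the corresponding weighted sum of concentrations of species in~$Z$ is constant along the trajectory and bounded below by its strictly positive initial value, which already forbids approach to the stratum. The stubborn case is a \emph{critical siphon}, one carrying no such conservation law; here the plan is to extract, from the mesoprimary data at~$I_{\rho,J}$---the exponents of the witness binomials and the $A$-grading of the associated prime---a direction transverse to the stratum along which $\dot V$ is uniformly bounded away from zero on a whole neighborhood, yielding a repulsion estimate incompatible with $\xx(t_k)\to\xx^*$. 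Converting the finite combinatorial input furnished by binomial primary decomposition into such a \emph{uniform} dynamical estimate valid along an entire toric stratum is the main obstacle; this is exactly the content of Conjecture~\ref{c:gac} that remains open, and the reason the known unconditional results are confined to auxiliary regimes such as weakly reversible networks with a single linkage class or networks whose siphons each contain the support of a conservation law.
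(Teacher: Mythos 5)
The statement you are attempting to prove is the Global Attractor Conjecture, which the paper presents as an open problem, not a theorem: the surrounding text in Section~\ref{s:gac} explicitly calls it ``the fundamental open question in the field'' and records only partial results (prime steady-state ideal, invariant polyhedra of dimension at most~$2$, boundary equilibria at vertices or facet interiors). There is therefore no proof of it in the paper for your argument to be compared against, and no correct completion of your outline can exist in the current literature.

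Your proposal is nonetheless an accurate survey of the standard reduction and is consistent with everything the paper records. The Helmholtz free energy Lyapunov function is the one implicit in Theorem~\ref{t:lyapunov}; confinement to the invariant polyhedron is Lemma~\ref{l:stoichiometric}; uniqueness of the positive equilibrium within a compatibility class is the paper's item~(1) citing \cite{feinberg87}; convergence of each trajectory to \emph{some} equilibrium is item~(2) citing \cite{sontag01,chavez03}; and the reduction to keeping strictly positive trajectories bounded away from boundary equilibria is precisely the paragraph following Conjecture~\ref{c:gac}. Your second and third steps also track Proposition~\ref{p:gac} faithfully: boundary equilibria live on zero sets of associated primes~$I_{\rho,J}$, and binomial primary (or mesoprimary) decomposition yields a finite combinatorial catalog of toric strata to exclude. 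The conservation-law argument dispatching non-critical siphons is the known one and is sound.

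The gap is in your third step, and you correctly flag it yourself. The sentence beginning ``the plan is to extract, from the mesoprimary data at~$I_{\rho,J}$, $\ldots$ a direction transverse to the stratum along which $\dot V$ is uniformly bounded away from zero'' is an intention, not an argument: the obstruction is precisely that $\dot V \to 0$ along trajectories approaching any boundary equilibrium, so there is no pointwise lower bound to exploit, and whatever uniform estimate would work must draw on geometric or higher-order structure that has not been isolated in general. Producing that estimate \emph{is} the conjecture. Your writeup is therefore best read as a correct road map of the known reductions and of why the paper can only conjecture the result, not as a proof attempt containing a fixable error.
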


The Global Attractor Conjecture is ``the fundamental open question in
the field'' \cite[\S1]{mass-actionReview}, since it would close the
book on fundamentally justifying mass-action kinetics.  It is known
that the conjecture holds when the binomial ideal is
prime~\cite{gopalkr09}.  It is also known, for detailed balanced
reaction systems with fixed positive initial species concentrations,
that
\begin{enumerate}
\item%
the detailed balanced equilibrium is unique \cite{feinberg87}, and
\item%
each trajectory tends toward some equilibrium
\cite{sontag01,chavez03}.
\end{enumerate}
Thus it suffices to bound every strictly positive trajectory away from
all boundary equilibria, for then the trajectory limits are forced
toward the detailed balanced one.

\begin{remark}\label{r:crnt'}
Detailed balancing is a stronger hypothesis than required for the
known results listed above, including Theorem~\ref{t:lyapunov}.  Weak
reversibility as in Remark~\ref{r:crnt}, or even a less stringent
condition, often suffices.  Detailed balancing is also weaker than the
hypothesis in the strongest (and still widely believed) form of
Conjecture~\ref{c:gac}, which stipulates a condition called
\emph{complex-balancing} that implies weak reversibility; see, for
instance, \cite[\S4.2]{AS09} for a precise statement.
\end{remark}

What do the boundary equilibria look like?  The restriction of a
detailed balanced reaction system to a coordinate subspace of~$\RR^n$
amounts to forcing the concentrations of some reactant species to be
zero.  Such restrictions still constitute detailed balanced reaction
systems, and the binomials whose vanishing describes the equilibria
come from the binomials in the original system.  This discussion can
be rephrased as follows.

\begin{prop}\label{p:gac}
Boundary equilibria of detailed balanced reaction systems are zeros of
associated primes of the ideal generated by the binomials in
Definition~\ref{d:multiple}.  Conjecture~\ref{c:gac} holds if and only
if every trajectory with positive initial concentrations remains
bounded away from the zero set of every associated prime.\qed
\end{prop}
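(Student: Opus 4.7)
The plan is to split the proposition into its two assertions and address each by applying the binomial primary decomposition of the ideal $I \subseteq \CC[x_1,\ldots,x_n]$ generated by the binomials $\lambda_{\aa\bb}\xx^\aa - \lambda_{\bb\aa}\xx^\bb$ appearing in Definition~\ref{d:multiple}. For the first assertion, I would invoke Theorem~\ref{t:allChar} together with Corollary~\ref{c:mesoprime} to write $V(I) = \bigcup V(I_{\rho,J})$ as a finite union over the associated primes of~$I$. Every equilibrium of the reaction system is by construction a zero of~$I$, and a boundary equilibrium has $\xi_i = 0$ for some~$i$; this forces it to lie in $V(I_{\rho,J})$ for some associated prime with $J \subsetneq \{1,\ldots,n\}$, since any zero of $I_{\rho,J}$ with $J = \{1,\ldots,n\}$ has all coordinates nonzero (the variables $x_i$ for $i \notin J$ belong to $I_{\rho,J}$, while on the zero set a nontrivial character~$\rho$ takes only nonzero values).

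For the equivalence in the second assertion, I would combine the three ingredients listed just before the proposition: on each stoichiometric compatibility class, the detailed balanced equilibrium~$\xi$ is the unique positive equilibrium; every positive-initial-condition trajectory converges to some equilibrium; and the intersection of a stoichiometric compatibility class with the closed positive orthant is compact. Together these force the $\omega$-limit of any strictly positive trajectory to be either~$\xi$ or a boundary equilibrium, which by the first assertion lies in $V(I_{\rho,J})$ for some associated prime with $J \subsetneq \{1,\ldots,n\}$.

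The ``if'' direction is then immediate: if a positive trajectory is bounded away from $V(I_{\rho,J})$ for every boundary associated prime, its $\omega$-limit cannot contain any boundary equilibrium, so it must converge to~$\xi$ and Conjecture~\ref{c:gac} holds. For the ``only if'' direction, when Conjecture~\ref{c:gac} holds the trajectory converges to~$\xi$, which lies in the open positive orthant at strictly positive distance from every boundary-prime variety; combined with the standard fact that positive-initial-condition trajectories remain in the open positive orthant, the compactness of the trajectory's closure yields a uniform positive lower bound on the distance to each such variety.

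The main obstacle, as I see it, is less in any individual step---each follows quickly from the cited material---and more in the careful bookkeeping needed to identify which associated primes are ``boundary'' ones. The toric prime $I_A$, and possibly other associated primes $I_{\rho,J}$ with $J = \{1,\ldots,n\}$, may themselves be associated to~$I$, yet their varieties can contain interior points---even the detailed balanced equilibrium itself---so no trajectory converging to~$\xi$ is bounded away from them. The second sentence of the proposition must therefore be read as quantifying over those associated primes whose varieties meet the boundary of $\RR^n_{\geq 0}$, which is the precise algebraic counterpart to the phrase ``boundary equilibria'' in the first sentence; formulating this correspondence cleanly is the only genuine technical hurdle in the argument.
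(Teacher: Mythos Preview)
The paper gives no formal proof of this proposition: it is stated with a bare \qed, and the intended justification is the informal paragraph immediately preceding it together with the two numbered facts (uniqueness of the detailed balanced equilibrium, and convergence of every trajectory to some equilibrium). Your proposal is exactly a fleshing-out of that informal discussion, so in substance you are doing what the paper does, only more carefully.

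The one point worth noting is that the subtlety you flag in your last paragraph is real and is not addressed in the paper. As literally stated, the second sentence of the proposition quantifies over \emph{every} associated prime, yet the detailed balanced equilibrium itself lies on the variety of the toric prime~$I_A$, which is certainly associated to~$I$; so no trajectory converging to it can be bounded away from that variety, and the ``only if'' direction would fail on a strict reading. The paper is being deliberately informal here---it is a survey, and the proposition is meant as a heuristic rephrasing rather than a precise theorem---so your instinct to restrict attention to the associated primes~$I_{\rho,J}$ with $J \subsetneq \{1,\ldots,n\}$ (equivalently, those whose varieties lie in the boundary of~$\RR^n_{\geq 0}$) is the correct way to make the statement rigorous, and is consistent with the paper's own phrase ``boundary equilibria''.
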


Proposition~\ref{p:gac} brings binomial primary decomposition to bear
on the chemistry of mass-action kinetics.  The details of how this
occurs are illustrated by certain special cases of
Conjecture~\ref{c:gac} whose proofs are known.  The characterizations
of these cases rely on a polyhedral concept hiding in the dynamics.

\begin{defn}\label{d:stoichiometric}
The \emph{stoichiometric compatibility subspace} of the reaction
system in Definition~\ref{d:multiple} is the real span $S$ of the
vectors $\aa - \bb$ over all reactions $A \rightleftharpoons B$ in the
system.  The \emph{stoichiometric compatibility class} (or
\emph{invariant polyhedron}) of a species concentration vector $\xi
\in \RR^n$ is the intersection of the nonnegative orthant with $\xi +
S$.
\end{defn}

\begin{lemma}\label{l:stoichiometric}
Trajectories for any reaction system are constrained to lie in the
invariant polyhedron of the vector of initial species concentrations.
\end{lemma}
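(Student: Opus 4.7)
The plan is to show the two defining properties of the invariant polyhedron separately: that the trajectory stays in the affine subspace $\xi + S$, and that it stays in the nonnegative orthant $\RR^n_{\geq 0}$.

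For the affine constraint, I would observe directly from Definition~\ref{d:multiple} that the vector field on the right-hand side of
$$
  \dot\xx = \sum_{A \rightleftharpoons B}
  (\bb-\aa)(\lambda_{\aa\bb}\xx^\aa - \lambda_{\bb\aa}\xx^\bb)
$$
is a scalar combination of the vectors $\bb - \aa$, and hence lies in~$S$ pointwise. Therefore $\dot\xx(t) \in S$ for all~$t$, and integrating gives $\xx(t) - \xi = \int_0^t \dot\xx(s)\,ds \in S$, so $\xx(t) \in \xi + S$ for all $t \geq 0$ in the domain of the trajectory. No hypothesis on reversibility or on the rate constants is needed for this step; it only uses that the vector field is a linear combination of reaction vectors.

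For the nonnegativity constraint, the key point is that every negative contribution to $\dot x_i$ carries $x_i$ as a factor. Writing out the $i^{\text{th}}$ coordinate,
$$
  \dot x_i = \sum_{A \rightleftharpoons B}
  (b_i-a_i)\bigl(\lambda_{\aa\bb}\xx^\aa - \lambda_{\bb\aa}\xx^\bb\bigr),
$$
I would check case by case: a summand contributes negatively to $\dot x_i$ only through $-\lambda_{\aa\bb}(b_i - a_i)\xx^\aa$ when $b_i > a_i$ (so $a_i \geq 0$ and the term contains $\xx^\aa$, but more importantly this term is positive in that case, so I must re-sort) or through $-\lambda_{\bb\aa}(b_i-a_i)\xx^\bb$ when $b_i - a_i > 0$, etc. The upshot after sorting signs is that whenever a summand is negative, the corresponding monomial contains $x_i$ to a positive power. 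Consequently, on the face $\{x_i = 0\}$ every negative contribution vanishes, so $\dot x_i \geq 0$ there. A standard comparison/invariance argument for ODEs (e.g., Nagumo's tangency condition, or a direct Gronwall-type bound on $\min(x_i,0)$) then shows that any trajectory starting in $\RR^n_{\geq 0}$ remains in $\RR^n_{\geq 0}$.

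Combining the two containments, $\xx(t) \in (\xi + S) \cap \RR^n_{\geq 0}$ for all~$t$, which is precisely the invariant polyhedron of~$\xi$. The main (minor) obstacle is the bookkeeping in the nonnegativity step: one has to be careful to note that the polynomial vector field is not merely tangent to the boundary but actually inward-pointing (or tangent) along each coordinate hyperplane, and then invoke the standard ODE invariance principle; everything else is a one-line consequence of the form of the equation.
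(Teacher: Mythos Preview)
Your proposal is correct and subsumes the paper's argument. The paper's proof is literally the single sentence ``This is immediate from the equation for $\dot\xx$ in Definition~\ref{d:multiple},'' which is exactly your affine-constraint step: the vector field lies in~$S$ pointwise, so trajectories stay in $\xi + S$. You go further and also verify forward invariance of the nonnegative orthant via the standard observation that every negative term in $\dot x_i$ carries a positive power of~$x_i$; the paper does not spell this out, presumably taking nonnegativity of concentrations as implicit in the chemical setting. (Your sign bookkeeping in that paragraph is a bit tangled mid-sentence, but the conclusion---that a term is negative only if its monomial involves $x_i$---is correct: if $(b_i-a_i)\lambda_{\aa\bb}\xx^\aa < 0$ then $a_i > b_i \geq 0$, and if $-(b_i-a_i)\lambda_{\bb\aa}\xx^\bb < 0$ then $b_i > a_i \geq 0$.)
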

\begin{proof}
This is immediate from the equation for $\dot\xx$ in
Definition~\ref{d:multiple}.
\end{proof}

The known cases of Conjecture~\ref{c:gac} include all systems whose
initial species concentration vectors have invariant polyhedra of
dimension~$2$ or less \cite[Corollary~4.7]{AS09}.  In the language of
Proposition~\ref{p:gac}, the method of proof is to bound all
trajectories away from the zero set of every associated prime whose
intersection with the relevant invariant polyhedron is
\begin{itemize}
\item%
a vertex \cite{dfanderson08,toricDynamicalSystems09} or
\item%
interior to a facet \cite{AS09}.
\end{itemize}
For a comprehensive review of known cases of the Global Attractor
Conjecture, see \cite[\S1 and~\S4]{AS09}.


The combinatorics of binomial primary decomposition might contribute
further than merely the statement of Proposition~\ref{p:gac}.  For
example, mesoprimary decomposition (Definition~\ref{d:mesoprimary};
see \cite{mesoprimary}) provides decompositions of binomial ideals
over the rational or real numbers, and therefore takes steps toward
primary decomposition over the reals.  Mesoprimary decomposition also
characterizes the associated lattices combinatorially, without
a~priori knowing the primary decomposition.  Both could be important
for applications to the Global Attractor Conjecture: perhaps
finiteness conditions surrounding associated lattices
(Example~\ref{e:mesoprimary}) indicates how to produce the desired
trajectory bounds, with the reality (i.e., defined over~$\RR$) of the
components forcing progress away from the boundary, as opposed to
(say) periodicity of~some kind.

In an amazing convergence, graphs associated to \emph{event systems}
\cite[Definition~2.9]{mass-actionReview} provide chemical
interpretations of the graphs $G_I$ from Sections~\ref{s:primary}
and~\ref{s:decomp} (particularly Definition~\ref{d:graph}) in this
survey.  Reversibility of the reaction system means that it is correct
for $G_I$ to be undirected.  The characterization of
\emph{naturality}\/ in \cite[Theorem~5.1]{mass-actionReview} is a
condition on the primary decomposition of the \emph{event ideal}.
This convergence is cause for optimism that lattice-point point
combinatorics will be instrumental in proving Conjecture~\ref{c:gac}
via Proposition~\ref{p:gac}.

\raggedbottom

\end{document}